\documentclass[11pt,twoside]{amsart}
\usepackage{mathrsfs}
\usepackage{txfonts}
\usepackage{bbm}
\usepackage{amsfonts}
\usepackage{amsmath}
\usepackage{esint}
\usepackage{amssymb}
\usepackage{wasysym}
\usepackage{psfrag}
\usepackage{graphics,epsfig,amsmath}
\setlength{\unitlength}{1cm}
\usepackage{comment}
\usepackage{enumerate}
\usepackage{fancyhdr}

\newtheorem{Theorem}{Theorem}[section]
\newtheorem{Lemma}[Theorem]{Lemma}
\newtheorem{Proposition}{Proposition}[section]
\newtheorem{Definition}{Definition}[section]
\newtheorem{remark}[Theorem]{Remark}

\setcounter{secnumdepth}{4}
\usepackage{metalogo}
\usepackage[cmyk]{xcolor}
\selectfont
\usepackage{mathtools}
\mathtoolsset{showonlyrefs}
\renewcommand{\epsilon}{\varepsilon}

\title[Regularity for mixed operators]{ On some regularity properties \\ of mixed local and nonlocal \\
elliptic equations}

\author[X. Su]{Xifeng Su}
\address{School of Mathematical Sciences, Laboratory of Mathematics and Complex Systems (Ministry of Education)\\
Beijing Normal University,
No. 19, XinJieKouWai St., HaiDian District, Beijing 100875, P. R. China}
\email{xfsu@bnu.edu.cn, billy3492@gmail.com}

\author[E. Valdinoci]{Enrico Valdinoci}
\address{Department of Mathematics and Statistics, University of Western Australia, 35 Stirling Highway, WA 6009 Crawley, Australia}
\email{enrico.valdinoci@uwa.edu.au}
	
\author[Y. Wei]{Yuanhong Wei}
\address{School of Mathematics, Jilin University, No. 2699, Qianjin St., Changchun 130012, P. R.  China}
\email{weiyuanhong@jlu.edu.cn}

\author[J. Zhang]{Jiwen Zhang}
\address{School of Mathematical Sciences, Laboratory of Mathematics and Complex Systems (Ministry of Education)\\
	Beijing Normal University,
	No. 19, XinJieKouWai St., HaiDian District, Beijing 100875, P. R. China}
\email{jwzhang826@mail.bnu.edu.cn,jiwen.zhang@uwa.edu.au}

\subjclass[2010]{35B65, 35R11, 35J67.}
 \keywords{$C^2$-regularity, operators of mixed order, critical, $L^\infty$-boundedness, H\"older estimate for the gradient.}


\begin{document}
\maketitle
\begin{abstract}
This article is concerned with ``up to $C^{2, \alpha}$-regularity results'' about a mixed local-nonlocal nonlinear elliptic equation which is driven by the superposition of Laplacian and fractional Laplacian operators. 

First of all, an estimate on the $L^\infty$ norm of weak solutions is established for more general cases than the ones present in the literature, including here critical nonlinearities.

 We then prove the interior $C^{1,\alpha}$-regularity and the $C^{1,\alpha}$-regularity up to the boundary of weak solutions, which extends previous results by the authors [X. Su, E. Valdinoci, Y. Wei and J. Zhang, Math. Z. (2022)], where the nonlinearities considered were of subcritical type. 

In addition, we establish the interior $C^{2,\alpha}$-regularity of solutions for all $s\in(0,1)$ and the $C^{2,\alpha}$-regularity up to the boundary for all $s\in(0,\frac{1}{2})$, with sharp regularity exponents.

For further perusal, we also include a strong maximum principle and some properties about the principal eigenvalue.
\end{abstract}

\tableofcontents

\section{Introduction}
The present paper is concerned with some regularity results about the following mixed local and nonlocal elliptic equation
\begin{equation}\label{Maineq}
\left\{%
\begin{array}{ll}
    - \Delta u +  (-\Delta)^s u = g(x,u) & \hbox{in $\Omega$,} \\
    u=0 &  \hbox{in $\mathbb{R}^n\backslash\Omega$,} \\
\end{array}%
\right.
\end{equation}
where $s\in (0,1)$ and~$\Omega$ is a bounded open set in  $\mathbb{R}^n$. 

As customary, the fractional Laplacian $ (-\Delta )^{s} $ is defined as
\begin{equation}
(-\Delta )^{s} u(x)=c_{n,s}\,{\text P.V.}\int_{\mathbb{R}^{n}} \frac{u(x)-u(y)}{|x-y|^{n+2s}}\, dy,
\end{equation}
 in which $ c_{n,s}>0 $ is a suitable normalization constant, whose explicit value does not play a role here, and~${\text P.V.}$ means that the integral is taken in the Cauchy principal value sense.


The superposition operator
\begin{equation}\label{OPL}
\mathcal{L}=-\Delta  + (-\Delta)^s,
\end{equation}
is of nonlocal type, due to the presence of the fractional Laplacian, hence
equation~\eqref{Maineq} is  a nonlocal elliptic equation in a bounded domain $\Omega$ with homogeneous
Dirichlet exterior data.

We also recall that
the fractional Laplacian is the generator of a L\'{e}vy flight, while the Laplacian is the generator of a Brownian motion. Based on this, equation~\eqref{Maineq} can naturally model the combined effect of Brownian motions and L\'{e}vy flights. As observed in~\cite{DV21}, these operators describe a biological species whose individuals diffuse either by a random walk or by a jump process. The analysis of different types of mixed operators motivated by biological questions has also been
carried out in \cite{MPV13, PV18}.

Elliptic equations driven by the above mixed operator have recently received a great attention from
different points of view, including viscosity solution theory \cite{JK05, BI08, BJK10, BCCI12}, existence and non-existence theory~\cite{AC21, BDVV22a, SVWZ}, eigenvalue problems~\cite{DPFR19, DPLV22},
shape optimization results~\cite{BDVV23},
symmetry properties of the solutions~\cite{BVDV21}, etc. 

In contrast with 
the cases in which the pure classical Laplacian and the pure fractional Laplacian
are separately taken into account, this kind of problems combines the classical setting and the features typical of nonlocal operators in a framework that is not scale-invariant. Hence,
 different scales will lead to different features of the mixed operator from the case of single classical Laplacian or single fractional Laplacian, thereby making either one of them prevail, or both somewhat coexist in the end.

{
The purpose of this article concentrates on establishing up to $C^{2,\alpha}$-regularity theory for mixed local-nonlocal elliptic equations, both in the interior and up to the boundary.

Our results are divided according to the following aspects:}.
\begin{itemize}
\item First of all, we extend the regularity result of \cite{SVWZ22} to some critical problems. More precisely, we prove the boundedness in $L^\infty$ of any weak solution of nonlinear problem~\eqref{Maineq}  with the critical Sobolev embedding exponent
\begin{equation}
	2^*=\begin{cases}
			\frac{2n}{n-2} &{\mbox{if }}n>2,\\
			\infty &{\mbox{if }}n\leqslant2.
		\end{cases}
	\end{equation}

{\item We then  deduce a $C^{1, \alpha}$-regularity result up to the boundary. This will actually be obtained from the $W^{2,p}$-regularity of weak solutions  of problem \eqref{Maineq}. Moreover, we discuss the interior  $C^{1,\alpha}$-regularity for all $\alpha\in(0,1)$. 

\item Next we  show the interior  $C^{2,\alpha}$-regularity for all $s\in(0,1)$. We develop a bespoke argument to get over a difficulty caused by the nonlinear terms. Our argument combines a truncation technique with the above  $C^{1, \alpha}$ estimates. We notice when $g$ depends only on $x$, the interior  $C^{2,\alpha}$-regularity of problem \eqref{Maineq}  is studied in \cite{BDVV22b}.
 Our result is thereby a nonlinear version of~\cite{BDVV22b}. 
However, even in the case of linear equations, our method is  different from \cite{BDVV22b} since the working space that we choose here is different.}
\item Finally, we show  the  $C^{2,\alpha}$-regularity up to the boundary for \eqref{Maineq} when~$s\in (0,\frac{1}{2})$ and discuss the case when ~$s\in (\frac{1}{2}, 1)$. 
A restriction on the fractional exponent for this type of regularity is somewhat unavoidable,
as shown by one-dimensional examples in which one picks a solution of~\eqref{Maineq} in~$\Omega:=(0,1)$ which behaves, near~$0^+$, as~$ax+O(x^2)$, for some constant~$a$.

Also, the nonlinear term brings some difficulties for the bootstrap argument, in view of technical regularity issues related to the composition of functions with low and high smoothness. \item As a byproduct of our main results, we also provide in the appendix some auxiliary results in a formulation which is ``ready for use'' in forthcoming papers, such as a strong maximum principle and a characterization of the principal eigenvalue.
\end{itemize}

Now we start presenting our main results. First off, in the strategy that we proposed above, the $L^\infty$-regularity theory goes as follows.
\begin{Theorem}[$L^\infty$-regularity]\label{th: regularity}
	Let $ u$  be   a weak solution\footnote{The precise definition of weak solution will be recalled in Definition~\ref{definition of weak solution}.} of \eqref{Maineq}.
	Assume that there exist $ c>0 $ and $q\in [1,2^*]$ such that
	\begin{equation}\label{eq: H 1}
	|g(x,t)|\leqslant c(1+|t|^{q-1}) \quad \text{ for a.e. } x\in  \Omega ,  t\in \mathbb{R}.
	\end{equation}Then, $ u\in L^{\infty}(\Omega)$. 
	
	More precisely, there exists a constant $C_0= C_0(c,n, q, \Omega)>0$ independent of $u$, such that
	\begin{equation*}
	\|u\|_{L^\infty(\Omega)}\leqslant C_0\left(1+\int_{\Omega}|u|^{2^*\beta_1}\,dx\right)^{\frac{1}{2^*(\beta_1-1)}},
	\end{equation*}where~$\beta_1:=\frac{2^*+1}{2}$. 
\end{Theorem}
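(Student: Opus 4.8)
The plan is to run a Moser iteration on truncated powers of the solution, in which the nonlocal part of $\mathcal L$ enters only through a favorable sign; the ambient integrability throughout is the Sobolev embedding $H^1_0(\Omega)\hookrightarrow L^{2^*}(\Omega)$ supplied by the Laplacian. \emph{Step 1 (local energy inequality).} For $L>0$ set $u_L:=\min\{|u|,L\}$, and for $\beta\geqslant1$ insert $\varphi:=u\,u_L^{2(\beta-1)}$ as a test function in the weak formulation (Definition~\ref{definition of weak solution}); the truncation is exactly what makes $\varphi$ admissible and $g(x,u)\varphi\in L^1(\Omega)$, since $u\in L^{2^*}(\Omega)$. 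As $\varphi=F(u)$ for a nondecreasing $F$ with $F(0)=0$, the integrand of the Gagliardo bilinear form of $(-\Delta)^{s}$ evaluated on $(u,\varphi)$ is pointwise nonnegative, so that term may simply be dropped; writing $w:=u\,u_L^{\beta-1}$ and using $|g(x,t)|\leqslant c(1+|t|^{q-1})$, one is left, after absorbing a good extra gradient term, with an inequality of the form
\begin{equation*}
\int_{\Omega}|\nabla w|^{2}\,dx\;\leqslant\;C\beta\int_{\Omega}\bigl(|u|+|u|^{q}\bigr)u_{L}^{2(\beta-1)}\,dx .
\end{equation*}

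\emph{Step 2 (critical growth).} The substantial point is the borderline case $q=2^{*}$, where $|u|^{q}u_{L}^{2(\beta-1)}=|u|^{2^{*}-2}w^{2}$ and the ``potential'' $|u|^{2^{*}-2}$ sits exactly in $L^{n/2}(\Omega)$, leaving no room for a cheap absorption. Following the Brezis--Kato scheme, split $|u|^{2^{*}-2}=|u|^{2^{*}-2}\mathbbm 1_{\{|u|\leqslant K\}}+|u|^{2^{*}-2}\mathbbm 1_{\{|u|>K\}}$: by H\"older and Sobolev the tail contributes $\bigl(\int_{\{|u|>K\}}|u|^{2^{*}}\,dx\bigr)^{2/n}C_{S}\|\nabla w\|_{L^{2}(\Omega)}^{2}$, which is reabsorbed on the left once $K$ is large, while the bounded part is controlled by a power of $K$ times $\int_{\Omega}w^{2}\,dx$. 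Interpolating $\|w\|_{L^{2}(\Omega)}$ between $\|w\|_{L^{2^{*}}(\Omega)}$ (absorbed by Sobolev once more) and $\|w\|_{L^{1}(\Omega)}\leqslant\|u\|_{L^{\beta}(\Omega)}^{\beta}$, bounding the linear term by $\|u\|_{L^{2\beta-1}(\Omega)}^{2\beta-1}$, using $\|w\|_{L^{2^{*}}(\Omega)}^{2}\geqslant\|u_{L}\|_{L^{2^{*}\beta}(\Omega)}^{2\beta}$, and finally letting $L\to\infty$ by Fatou, one obtains a recursive estimate of the schematic form
\begin{equation*}
\|u\|_{L^{2^{*}\beta}(\Omega)}\;\leqslant\;\bigl(C(\beta)\,K^{\gamma_n}\bigr)^{\frac{1}{2\beta}}\Bigl(1+\|u\|_{L^{2\beta-1}(\Omega)}+\|u\|_{L^{\beta}(\Omega)}\Bigr),
\end{equation*}
with $\gamma_n>0$ depending only on $n$. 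The subcritical regime $q<2^{*}$ is easier: there $|u|^{q-2}\in L^{r}(\Omega)$ for some $r>n/2$, and its tail is small with a rate controlled purely by $\Omega$ and $\|u\|_{L^{2^{*}}(\Omega)}$, so no delicate choice of $K$ is needed.

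\emph{Step 3 (bootstrap and the explicit exponent).} Take $\beta_{0}:=1$, so $u\in L^{2^{*}\beta_{0}}(\Omega)=L^{2^{*}}(\Omega)$, and at each stage choose $\beta_{k}$ by $2\beta_{k}-1=2^{*}\beta_{k-1}$, i.e. $\beta_{k}=\frac{2^{*}\beta_{k-1}+1}{2}$: this makes the two norms on the right-hand side of the recursion land exactly in $L^{2^{*}\beta_{k-1}}(\Omega)\subset L^{\beta_{k}}(\Omega)$, so finiteness propagates along the iteration, and $\beta_{1}=\frac{2^{*}+1}{2}$. Since $2^{*}/2>1$ the exponents $2^{*}\beta_{k}$ grow geometrically to $+\infty$, whence $u\in L^{p}(\Omega)$ for every $p<\infty$; moreover, from the second step onwards the tail admits the quantitative bound $\int_{\{|u|>K\}}|u|^{2^{*}}\,dx\leqslant K^{-2^{*}(\beta_{1}-1)}\int_{\Omega}|u|^{2^{*}\beta_{1}}\,dx$ (up to already-estimated norms), so the first truncation level may be taken as $K\sim\bigl(1+\int_{\Omega}|u|^{2^{*}\beta_{1}}\,dx\bigr)^{1/(2^{*}(\beta_{1}-1))}$ — the smallest value for which that absorption closes. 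Feeding this choice into the recursion, summing the convergent series $\sum_{k}\frac{1}{2\beta_{k}}$ and passing to the limit $2^{*}\beta_{k}\to\infty$ produces exactly
\begin{equation*}
\|u\|_{L^{\infty}(\Omega)}\;\leqslant\;C_{0}\Bigl(1+\int_{\Omega}|u|^{2^{*}\beta_{1}}\,dx\Bigr)^{\frac{1}{2^{*}(\beta_{1}-1)}},\qquad \beta_{1}=\tfrac{2^{*}+1}{2},
\end{equation*}
with $C_{0}=C_{0}(c,n,q,\Omega)$.

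\emph{Main obstacle.} Away from $q=2^{*}$ everything is routine; the difficulty is concentrated in the saturation of the Sobolev exponent, which removes any free absorption and forces the quantitative Brezis--Kato truncation. The delicate bookkeeping is twofold: tracking how the truncation level $K$ — hence the constants $C(\beta_{k})$ — depends on $\beta$ and on $\int_{\Omega}|u|^{2^{*}\beta_{1}}\,dx$, so that $\sum_{k}\frac{\log C(\beta_{k})}{2\beta_{k}}$ stays convergent; and observing that the very first stage, from $L^{2^{*}}$ to $L^{2^{*}\beta_{1}}$, is only a qualitative upgrade (no rate in $K$ is available from $L^{2^{*}}$ alone), which is precisely why the final estimate is phrased through the $L^{2^{*}\beta_{1}}$-norm rather than the $L^{2^{*}}$-norm.
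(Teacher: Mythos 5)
Your overall strategy is the same as the paper's: truncate to get admissible test functions, drop the fractional-Laplacian contribution by a sign/monotonicity argument (the paper does this via the convexity inequality~\eqref{dvsdv}, you via the pointwise nonnegativity of the bilinear form on $(u,F(u))$ for nondecreasing $F$), absorb the critical $|u|^{2^*-2}$ potential by splitting at a level $K$, and iterate. The choice of truncation ($u_L=\min\{|u|,L\}$ with test function $uu_L^{2(\beta-1)}$, versus the paper's $C^1$ convex $\varphi$ of~\eqref{vsdvdsc} with test function $\varphi(u)\varphi'(u)$) is a cosmetic difference.

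The genuine divergence is in the iteration relation, and here your choice is the worse one. You set $2\beta_k-1=2^*\beta_{k-1}$, which matches the \emph{easy} term $\int_\Omega|u|^{2\beta-1}$ to the previous norm, but leaves the \emph{hard} critical term $\int_\Omega|u|^{2^*-2}w^2\approx\int_\Omega|u|^{2\beta_k+2^*-2}$ with exponent $2^*\beta_{k-1}+2^*-1$, strictly above $2^*\beta_{k-1}$; so you are forced to run the Brezis--Kato absorption at \emph{every} step with a truncation level $K_k\to\infty$ (it must satisfy $\bigl(\int_{\{|u|>K_k\}}|u|^{2^*}\,dx\bigr)^{2/n}\lesssim\beta_k^{-1}$, hence $K_k$ depends both on $k$ and on $u$). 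The paper instead takes $2\beta_{m+1}+2^*-2=2^*\beta_m$, matching the hard term: then $\int_\Omega\varphi(u)^2|u|^{2^*-2}\leqslant\int_\Omega|u|^{2^*\beta_m}$ is \emph{directly} finite by the previous step, the easy term $\int_\Omega|u|^{2\beta_{m+1}-1}$ is dominated by Young's inequality, and no further truncation or absorption is needed after Lemma~\ref{lemma u in L beta1}. Moreover this relation gives $2(\beta_{m+1}-1)=2^*(\beta_m-1)$, which is exactly what makes the quantities $A_m=\bigl(1+\int_\Omega|u|^{2^*\beta_m}\,dx\bigr)^{1/(2^*(\beta_m-1))}$ self-normalizing and yields the product bound~\eqref{DRI} with a $u$-independent constant. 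In your scheme the accumulated $K_k$-dependence produces an extra factor $\bigl(\int_\Omega|u|^{2^*\beta_1}\,dx\bigr)^{\sigma}$ with $\sigma$ a sum over the iteration, and there is no reason for the total exponent to equal $\tfrac{1}{2^*(\beta_1-1)}$; your Step~3 asserts that it ``produces exactly'' the stated estimate but the bookkeeping you defer is precisely where this would fail or at least require a separate argument. So: correct idea, correct qualitative conclusion $u\in L^\infty(\Omega)$, but a less efficient iteration relation that does not cleanly deliver the stated quantitative bound; switching to the paper's relation $2\beta_{m+1}+2^*-2=2^*\beta_m$ removes all of the absorption from step two onward and gives the exponent for free.
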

 We point out that, the $L^{2^*\beta_1}$-norm of $u$ appearing in Theorem~\ref{th: regularity} is finite (as guaranteed by the forthcoming  Lemma~\ref{lemma u in L beta1}; in fact, as pointed out in~\eqref{eq:u in L beta1},
one can control the $L^{2^*\beta_1}$-norm by the $L^{2^*}$-norm,
hence by the energy of the solution). 

%

	\begin{remark}{\rm
In~\cite{SVWZ22}, we established
		the $L^\infty$ boundedness of   weak solution,  provided that the exponent of the  nonlinear term is controlled by
				\begin{equation}
				2^*_s
				=\begin{cases}\label{fractional}
					\frac{2n}{n-2s} &{\mbox{if }}n>2s,\\
					\infty &{\mbox{if }}n\leqslant2s.
				\end{cases}
		\end{equation} Here $2^*_s$ is the critical Sobolev embedding exponent for the space $H^s(\mathbb{R}^n)$ and characterizes the critical case for the operator $(-\Delta)^s$.
		
			From the above definitions of $2^*$ and $2_s^*$, one can easily see that $2^*\geqslant2^*_s$ (and the inequality is strict when $n>2s$). This suggests that, in several cases of interest,  the critical exponent is actually $2^*$ rather than $2^*_s$, due to the interpolation inequality, and correspondingly
Theorem~\ref{th: regularity} can be seen as a sharpening of~\cite[Theorem~1.1]{SVWZ22}.}
	\end{remark}

Based on the $L^\infty$-regularity theory, one can employ the $W^{2,p}$ theory of weak solutions  to deduce a global $ C^{1,\alpha} $-regularity  as follows.

\begin{Theorem}[$C^{1,\alpha}$-regularity up to the boundary]\label{th:C^{1,alpha}}
Let $ u$  be   a weak solution of~\eqref{Maineq}.
	Assume that $g$ satisfies \eqref{eq: H 1}  and 
	$ \partial\Omega $ is of class  $ C^{1,1} $. 
	
	Then, $ u\in C^{1,\alpha}(\overline\Omega)$ for any $\alpha\in(0,\min\left\{1,2-2s\right\})$.
%
%

More precisely, for~$s$ and~$\alpha$ as above, it holds that
\begin{equation}
	\|u\|_{C^{1,\alpha}(\overline{\Omega})}\leqslant C \left(1+\|u\|_{L^\infty(\Omega)}^{2^*}\right),
\end{equation}
where the positive constant $C$ depends only on $n,c,\alpha,\Omega,s$.
\end{Theorem}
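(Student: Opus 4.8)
The plan is to reduce the $C^{1,\alpha}$ estimate for the mixed operator to the classical $W^{2,p}$ and Schauder theory for the plain Laplacian, by treating the nonlocal term $(-\Delta)^s u$ as a known right-hand side. Concretely, having $u\in L^\infty(\Omega)$ from Theorem~\ref{th: regularity}, I would rewrite \eqref{Maineq} as the local Dirichlet problem $-\Delta u = g(x,u) - (-\Delta)^s u =: f$ in $\Omega$, $u=0$ on $\partial\Omega$, and then run a bootstrap. The growth hypothesis \eqref{eq: H 1} together with $u\in L^\infty$ gives $g(\cdot,u)\in L^\infty(\Omega)$, with $\|g(\cdot,u)\|_{L^\infty}\lesssim 1+\|u\|_{L^\infty}^{q-1}\lesssim 1+\|u\|_{L^\infty}^{2^*-1}$, so the only delicate object is $(-\Delta)^s u$.

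\medskip

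The core of the argument is the iteration. \textbf{Step 1:} Since $u\in H^1_0(\Omega)\cap L^\infty(\mathbb{R}^n)$ (the exterior datum is zero, so $u$ is globally bounded), one checks that $(-\Delta)^s u\in L^p(\Omega)$ for a suitable starting $p$; indeed $u\in H^1(\mathbb{R}^n)\cap L^\infty$ embeds into $H^{s}$ and more, so $(-\Delta)^s u$ lies at least in some $L^p$ with $p>n/2$ for $2s<2$, and in any case $f=g(x,u)-(-\Delta)^s u\in L^p(\Omega)$ for some $p>1$. \textbf{Step 2:} Classical Calderón--Zygmund theory for $-\Delta$ on a $C^{1,1}$ domain gives $u\in W^{2,p}(\Omega)$, hence by Sobolev embedding $u\in C^{1,\gamma}(\overline\Omega)$ for some $\gamma>0$ once $p>n$; if the first $p$ is not large enough, one reinserts the improved integrability of $u$ into the bound for $(-\Delta)^s u$ and repeats, gaining integrability at each stage until $p$ exceeds $n$. \textbf{Step 3:} Once $u\in C^{1,\gamma}(\overline\Omega)$ (equivalently $u\in C^{1,\gamma}(\mathbb{R}^n)$ after extension by zero, which is only Lipschitz across $\partial\Omega$ — this is why the exponent saturates), the standard mapping properties of $(-\Delta)^s$ on Hölder spaces give $(-\Delta)^s u\in C^{0,\beta}$ with $\beta = \min\{\gamma, 1-2s\}$ interior, and globally one loses a bit near the boundary because $u$ is only $C^{1,1}$-flat and the zero extension is merely $C^{0,1}$ there, capping the gain at $2-2s$. \textbf{Step 4:} With $f\in C^{0,\alpha}(\overline\Omega)$ for $\alpha<\min\{1,2-2s\}$, Schauder estimates up to the boundary for the Dirichlet problem for $-\Delta$ on a $C^{1,1}$ (in fact here one needs the $C^{1,\alpha}$ global Schauder estimate, which holds on $C^{1,1}$ domains) yield $u\in C^{1,\alpha}(\overline\Omega)$ with the quantitative bound $\|u\|_{C^{1,\alpha}(\overline\Omega)}\le C(1+\|u\|_{L^\infty(\Omega)}^{2^*})$, the power $2^*$ coming from tracking $\|g(\cdot,u)\|_{L^\infty}\lesssim 1+\|u\|_{L^\infty}^{2^*-1}$ and the fact that $\|(-\Delta)^s u\|$ is controlled by lower-order norms of $u$ that are in turn dominated by $1+\|u\|_{L^\infty}$ after the bootstrap closes.

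\medskip

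\textbf{The main obstacle} I anticipate is the precise behaviour of $(-\Delta)^s u$ near $\partial\Omega$ and the bookkeeping that pins down the sharp threshold $\alpha<\min\{1,2-2s\}$. The fractional Laplacian of a function that vanishes outside $\Omega$ and is smooth inside is generically only $C^{0,1-2s}$ (for $s>1/2$) or $C^{1,1-2s}$-type near the boundary — the zero extension kills higher regularity — so even if $u$ were $C^\infty$ in $\overline\Omega$ one cannot expect $f=-(-\Delta)^su+g(x,u)$ to be better than $C^{0,1-2s}$ up to $\partial\Omega$ when $2s>1$, and this is exactly the ceiling reflected in the statement. Making this rigorous requires a careful estimate of $(-\Delta)^s u$ in weighted or Hölder norms in a boundary collar, which I would extract from known results on the fractional Laplacian acting on $C^{k,\sigma}$ functions supported in a half-space-type region (and which the authors presumably cite from the literature on the fractional Laplacian). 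The interior part of Step 3 is standard; it is the global/boundary matching that carries the subtlety, together with ensuring the bootstrap of Steps 1--2 actually terminates with $p>n$ in finitely many steps, which follows because each application of $W^{2,p}$ theory strictly improves the Sobolev exponent by a fixed amount as long as $p<n$.
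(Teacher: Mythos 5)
Your overall plan — isolate the nonlocal term, treat $-\Delta u = g(x,u)-(-\Delta)^s u$ as a Dirichlet problem for the plain Laplacian, and bootstrap via Calder\'on--Zygmund theory — is the same one the paper uses, packaged in the paper as the $W^{2,p}$ result for the mixed operator, Theorem~\ref{theorem w 2p}, whose proof is precisely the fixed-point/bootstrap you describe. Steps~1 and~2 of your proposal therefore match the paper in spirit.

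However, Steps~3 and~4 are misdirected and would not complete the argument as stated. You are aiming for $C^{1,\alpha}(\overline\Omega)$, for which Schauder theory is not the right tool: Schauder requires $f\in C^{0,\alpha}(\overline\Omega)$ and returns $C^{2,\alpha}(\overline\Omega)$ (which is the content of the paper's separate Theorem~\ref{theorem C^2 alpha global}, and only works for $s<\tfrac12$). To reach the sharp $C^{1,\alpha}$ threshold you never need $(-\Delta)^s u$ to be H\"older; you only need $u\in W^{2,p}(\Omega)$ for $p$ up to the right ceiling, and then the Morrey/Sobolev embedding $W^{2,p}\hookrightarrow C^{1,1-n/p}$ gives the conclusion. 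The restriction $\alpha<\min\{1,2-2s\}$ comes from the fact that for $s>\tfrac12$ the $W^{2,p}$ estimate for the mixed operator holds only for $n<p<n/(2s-1)$, so $1-n/p\to 2-2s$ as $p$ approaches this ceiling. In other words, the obstacle is the \emph{$L^p$-integrability} of $(-\Delta)^s u$ near $\partial\Omega$ (which limits the range of $p$ for which the $W^{2,p}$ bootstrap closes), not the H\"older regularity of $(-\Delta)^s u$. Your Step~2 stops prematurely at "some $\gamma>0$ once $p>n$" and then tries to close the gap with Schauder instead of simply pushing $p$ to the endpoint, which is what both makes the argument quantitative and produces the stated range of $\alpha$. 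Also note that your Step~3 formula $\beta=\min\{\gamma,1-2s\}$ is vacuous when $s>\tfrac12$ since $1-2s<0$; for interior H\"older estimates on $(-\Delta)^s$ of a $C^{1,\gamma}$ function the correct gain is $C^{1+\gamma-2s}$ when $1+\gamma-2s>0$, not the expression you wrote.
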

We remark that Theorems~\ref{th: regularity} and~\ref{th:C^{1,alpha}} can be seen as a critical exponent version
of the $L^\infty$-regularity and of the $ C^{1,\alpha} $-regularity in \cite{SVWZ22}, respectively.

	We next present an interior regularity theory for weak solutions of the equation without specifying any external condition.
Namely, by combining a suitable truncation method and a covering argument, we obtain the interior $ C^{1,\alpha} $-regularity and the interior $ C^{2,\alpha} $-regularity, as formulated in the following two results: 
	
		\begin{Theorem}[Interior $ C^{1,\alpha} $-regularity]\label{th C^1,alpha interior without boundary condition}
	Let $ g(x,t)\in L^\infty_{\rm loc}(\Omega\times\mathbb{R}) $, $ u \in X^1$ 
	be  a bounded  weak solution of 
	\begin{equation}\label{1.5}
		- \Delta u +  (-\Delta)^s u = g(x,u) \qquad  \hbox{in $\Omega$},
	\end{equation}
where $X^1$ will be defined in \eqref{X_1}.  Let us denote 
	\begin{equation}\label{iu}
			\quad {\rm I_u}:=[-\|u\|_{L^\infty(\Omega)},\|u\|_{L^\infty(\Omega)}].
				\end{equation}
If~$V$ is an open domain with $\overline{V}\subset\Omega$,  and denote
		\[\rho:=\text{ dist}(V,\partial\Omega),\quad \text{  and } \quad  V_{\delta}:=\left\{x\in \Omega: \text{ dist}(x, V)<\delta\right\}. \]
Then, $u\in C^{1,\alpha}(\overline V)$ for any $\alpha\in (0,1)$.	

More specifically, 
	\begin{equation}\label{final inequality }
		\|u\|_{C^{1,\alpha}(\overline V)}\leqslant C\left(\|g\|_{L^\infty( V_{\frac{3\rho}{4}} \times{\rm I_u})}+ \|u\|_{L^\infty(\mathbb{R}^n)}\right)
	\end{equation}
	where  the constant $ C>0 $ depends on $ n,s,\alpha,V $.
\end{Theorem}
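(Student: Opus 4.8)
The plan is to freeze the nonlinearity and treat the equation as a linear mixed-order equation with bounded right-hand side, but the subtlety is that the nonlocal term $(-\Delta)^s u$ at a point $x\in V$ sees the values of $u$ on all of $\mathbb{R}^n$, whereas we only control $g(x,u)$ on $V_{3\rho/4}\times {\rm I_u}$ and $u$ only in $L^\infty(\mathbb{R}^n)$ (no exterior Dirichlet condition is assumed). So first I would set $h(x):=g(x,u(x))$ for $x\in V_{3\rho/4}$ and observe $\|h\|_{L^\infty(V_{3\rho/4})}\leqslant \|g\|_{L^\infty(V_{3\rho/4}\times {\rm I_u})}=:M_1$, and $\|u\|_{L^\infty(\mathbb{R}^n)}=:M_2$. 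The equation $-\Delta u+(-\Delta)^s u=h$ holds in $V_{3\rho/4}$ in the weak sense. The idea is to split $(-\Delta)^s u = (-\Delta)^s(u\chi_{V_{3\rho/4}}) + (-\Delta)^s(u(1-\chi_{V_{3\rho/4}}))$; the second piece, evaluated at points of $V_{\rho/2}$, is a convergent (non-principal-value) integral with kernel bounded away from the singularity, hence defines a function that is $C^\infty$ on $V_{\rho/2}$ with all derivatives controlled by $M_2$ and geometric constants depending on $\rho$. Thus $u$ solves, on $V_{\rho/2}$,
\begin{equation*}
-\Delta u = \tilde h := h - (-\Delta)^s u \in L^\infty(V_{\rho/2}),\qquad \|\tilde h\|_{L^\infty(V_{\rho/2})}\leqslant C(n,s,\rho)(M_1+M_2+\|(-\Delta)^s u\|_{L^\infty}).
\end{equation*}

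The circularity here — $\|(-\Delta)^s u\|_{L^\infty}$ appears on the right — is resolved by an iteration on shrinking neighborhoods, which I expect to be the main technical obstacle. Concretely, I would run a finite bootstrap: starting from $u\in L^\infty$, interior elliptic regularity for $-\Delta$ (Calderón–Zygmund $W^{2,p}$ estimates, or directly interior Schauder once the right-hand side is Hölder) upgrades $u$ to $W^{2,p}_{\rm loc}\hookrightarrow C^{1,\beta}_{\rm loc}$ on a slightly smaller ball once we know $\tilde h\in L^p_{\rm loc}$; but to even make sense of $(-\Delta)^s u$ pointwise in the interior one needs $u\in C^{1,\beta}$ locally with $\beta>2s-1$ (if $2s>1$) or merely $u\in C^\beta$ with $\beta>2s$ otherwise — which is supplied by the $X^1$ regularity of weak solutions together with the first Caccioppoli/De Giorgi step. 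Once $u\in C^{1,\beta}_{\rm loc}$ on $V_{3\rho/4}$ for some $\beta>0$, the truncated nonlocal term $(-\Delta)^s u$ is actually in $C^{\gamma}_{\rm loc}$ for $\gamma=\min\{\beta+2s-? ,\dots\}$ — more cleanly, since the kernel is fixed away from the diagonal for the far part and the near part $(-\Delta)^s(u\chi_{V})$ maps $C^{1,\beta}$ into $C^{1+\beta-2s}$ (when $\beta>2s-1$) — so $\tilde h\in C^{\beta'}_{\rm loc}$ and interior Schauder gives $u\in C^{2,\beta'}_{\rm loc}$, whence iterating pushes the Hölder exponent of $\tilde h$ up to any $\alpha<1$, and then $u\in C^{2,\alpha}_{\rm loc}$, in particular $u\in C^{1,\alpha}(\overline V)$. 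The bound \eqref{final inequality } is assembled by tracking constants through the finitely many steps, each of which contributes only multiplicative constants depending on $n,s,\alpha$ and the geometry of $V$ (i.e.\ on $\rho$ and $\mathrm{diam}\,V$).

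The second ingredient I would need is a covering argument: cover $\overline V$ by finitely many balls $B_{r}(x_i)$ with $B_{2r}(x_i)\subset V_{\rho/4}$, prove the interior estimate on each $B_r(x_i)$ in terms of $\|g\|_{L^\infty(B_{2r}(x_i)\times{\rm I_u})}\leqslant M_1$ and $M_2$, and patch. The local estimate on a ball is where the truncation above is applied: decompose $u=u_1+u_2$ with $u_1:=u\chi_{B_{2r}}$ capturing the "local" part and $u_2$ the tail; $(-\Delta)^s u_2$ is smooth on $B_r$ with norm $\lesssim r^{-n-2s}\|u_2\|_{L^1}\lesssim \rho^{-n-2s}M_2$, so it is absorbed into the right-hand side cleanly, while for $(-\Delta)^s u_1$ one uses the mapping properties of the fractional Laplacian on compactly supported Hölder functions.

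I expect the genuinely delicate point to be making the composition $x\mapsto g(x,u(x))$ and the pointwise definition of $(-\Delta)^s u$ legitimate at the very first step — i.e.\ before we know $u$ is even continuous. Here one must argue in the weak/distributional formulation, use that $g\in L^\infty_{\rm loc}$ so $h\in L^\infty_{\rm loc}$ regardless of the regularity of $u$, apply $L^p$ theory for $-\Delta$ to get $u\in W^{2,p}_{\rm loc}$ for all $p<\infty$ after subtracting the (already-smooth) far nonlocal part — noting that the far part requires only $u\in L^1(\mathbb{R}^n)$, hence $u\in L^\infty(\mathbb{R}^n)$ with $\Omega$ bounded is more than enough — and only then read off $u\in C^{1,\beta}_{\rm loc}$ to launch the Schauder bootstrap. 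Once past this bootstrap-initialization hurdle, the remaining steps are standard elliptic regularity iteration, and the final inequality follows by bookkeeping. The restriction to $\alpha<1$ (rather than reaching $C^{2,\alpha}$ for all $\alpha$) reflects that we are proving only the $C^{1,\alpha}$ statement here; the stronger interior $C^{2,\alpha}$ claim of the companion theorem would need the additional truncation-plus-$C^{1,\alpha}$ device mentioned in the introduction to handle the $g$-dependence on $u$.
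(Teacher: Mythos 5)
Your plan is genuinely different from the paper's. The paper first mollifies $u$ to $u_\epsilon = \eta_\epsilon * u$, so that all quantities (in particular $(-\Delta)^s u_\epsilon$) are classically well defined; it then applies a cutoff $\phi^R$ to $u_\epsilon$, invokes a local estimate for the mixed operator from \cite{MR4560756}, resolves the self-reference of the resulting inequality (the $C^{1,\alpha}$ norm at scale $R$ appears again at scale $2R$) via an absorption lemma adapted from \cite{MR3331523} (stated as Proposition~\ref{pro final estimate 1}), and finally passes to the limit in $\epsilon$ by Arzel\`a--Ascoli. You instead propose to bootstrap directly on $u$ itself, splitting $(-\Delta)^s u$ into near and far parts and iterating Calder\'on--Zygmund/Schauder theory. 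The paper's mollification is exactly the device that makes the ``initialization'' step unproblematic, and the absorption lemma is what replaces your iteration on shrinking neighborhoods. Your approach can in principle be made to work, but as written it has two concrete gaps.

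\textbf{Gap 1 (initialization).} You correctly identify that before the first bootstrap step one must make sense of $(-\Delta)^s u$ as a locally integrable function when $u$ is merely $H^1_{\rm loc}(\Omega)\cap L^\infty(\mathbb{R}^n)$. Your proposed fix --- ``supplied by the $X^1$ regularity of weak solutions together with the first Caccioppoli/De Giorgi step'' --- does not close this. For $s<\tfrac12$ one can argue directly that the near part of $(-\Delta)^s u$ lies in $L^2_{\rm loc}$ because $u\in H^1_{\rm loc}\subset H^{2s}_{\rm loc}$; but for $s\geqslant\tfrac12$ one has $(-\Delta)^s u\in H^{1-2s}_{\rm loc}$ with $1-2s<0$, i.e.\ only a distribution of negative order, and no Caccioppoli/De Giorgi step produces H\"older or even $L^p$ regularity of $(-\Delta)^s u$ from this alone. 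One genuinely needs either a negative-Sobolev bootstrap (gaining $2-2s$ in the Sobolev scale at each step until the right-hand side is a function) or the paper's mollification; you state neither. The mollification is cleaner precisely because $u_\epsilon\in C^\infty$ makes all pointwise quantities immediately legitimate, and the estimates are uniform in $\epsilon$.

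\textbf{Gap 2 (Schauder is unavailable).} You assert that once $u\in C^{1,\beta}_{\rm loc}$, iterating gives $\tilde h\in C^{\beta'}_{\rm loc}$ and then interior Schauder pushes $u$ to $C^{2,\alpha}_{\rm loc}$. This is false under the stated hypothesis $g\in L^\infty_{\rm loc}(\Omega\times\mathbb{R})$: the term $h(x)=g(x,u(x))$ is only $L^\infty$ in $x$, hence $\tilde h=h-(-\Delta)^s u$ cannot be H\"older continuous, regardless of the regularity of $u$, and Schauder theory never applies. The correct route (which you also mention, so this is a misfire rather than a total dead end) is to stop at the Calder\'on--Zygmund step: once $\tilde h\in L^\infty_{\rm loc}$, one gets $u\in W^{2,p}_{\rm loc}$ for all $p<\infty$, hence $u\in C^{1,\alpha}_{\rm loc}$ for all $\alpha<1$, with no detour through $C^{2,\alpha}$. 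This is essentially what the paper obtains in one shot from the estimate of \cite{MR4560756} plus interpolation. Passing through $C^{2,\alpha}$ requires the H\"older assumption $g\in C^\alpha_{\rm loc}$, which is the subject of Theorem~\ref{th C^2,alpha interior without boundary condition}, not the present one.

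With those two points repaired --- concretely: mollify (or run a careful negative-Sobolev bootstrap), and drop the Schauder step in favour of $W^{2,p}$ --- your near/far splitting and covering argument match the paper's cutoff-plus-covering strategy, and the final inequality~\eqref{final inequality } assembles as you describe.
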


%
%
%
%


\begin{Theorem}[Interior $ C^{2,\alpha} $-regularity]\label{th C^2,alpha interior without boundary condition}
				Let $ g(x,t)\in C^{\alpha}_{\rm loc}(\Omega\times\mathbb{R}) $ for any  $\alpha\in(0,1)$,  $ u\in X^1$ 
	 be  a  bounded  weak solution of
	 \begin{equation}\label{1.6}
- \Delta u +  (-\Delta)^s u = g(x,u) \qquad  \hbox{in $\Omega$}.
	 \end{equation}
 Assume that $V$ is an open  domain with $\overline{V}\subset\Omega$. 
 
 Then, $u\in C^{2,\alpha}(\overline V)$.
 
 Furthermore,
	\begin{equation}\label{final inequality 2 }
	\|u\|_{C^{2,\alpha}(\overline V)}\leqslant C\left(\|u\|_{L^\infty(\mathbb{R}^n)}+\|g\|_{C^\alpha(\overline{V}_{\frac{3\rho}{4}} \times{\rm I_u})}\right) \left(1+\|g\|_{C^\alpha(\overline{V}_{\frac{3\rho}{4}} \times{\rm I_u})}\right),
	\end{equation}
	where  the constant $ C>0 $ depends on $ n,s,\alpha,V $. 
\end{Theorem}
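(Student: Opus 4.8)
The plan is to bootstrap from the interior $C^{1,\alpha}$-regularity of Theorem~\ref{th C^1,alpha interior without boundary condition} using a truncation/freezing argument adapted to the mixed operator. First I would fix a slightly smaller open set $W$ with $\overline{V}\subset W$ and $\overline{W}\subset\Omega$, and apply Theorem~\ref{th C^1,alpha interior without boundary condition} on $W$ to conclude $u\in C^{1,\alpha}(\overline W)$ with the quantitative bound \eqref{final inequality }; in particular $h(x):=g(x,u(x))$ is well defined and, since $g\in C^\alpha_{\rm loc}(\Omega\times\mathbb{R})$ and $u\in C^{1,\alpha}\subset C^{0,\alpha}$, the composition $h$ lies in $C^{\alpha}(\overline W)$ with $\|h\|_{C^\alpha(\overline W)}$ controlled (via the standard estimate for composition $g\circ(\mathrm{id},u)$) by a product of $\|g\|_{C^\alpha}$ and a factor linear in $1+[u]_{C^{0,\alpha}}$, hence by the right-hand side of \eqref{final inequality 2 }. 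This is where the product structure $\|g\|_{C^\alpha}(1+\|g\|_{C^\alpha})$ in the claimed estimate comes from: one copy of $\|g\|_{C^\alpha}$ appears directly, and the other is hidden inside the $C^{1,\alpha}$-norm of $u$ via \eqref{final inequality }.

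Next I would freeze the nonlinearity: $u$ now solves the \emph{linear} mixed equation $-\Delta u + (-\Delta)^s u = h$ in $W$ with $h\in C^\alpha(\overline W)$. The heart of the matter is a Schauder-type interior estimate for $\mathcal L$. I would localize with a cutoff $\eta\in C^\infty_c(W)$, $\eta\equiv1$ on a neighborhood of $\overline V$, and split $(-\Delta)^s u = (-\Delta)^s(\eta u) - (-\Delta)^s((1-\eta)u)$. The second term is a nonlocal tail: since $(1-\eta)u$ vanishes near $\overline V$, the kernel representation gives $(-\Delta)^s((1-\eta)u)(x) = -c_{n,s}\int_{\{1-\eta\neq0\}} \frac{(1-\eta(y))u(y)}{|x-y|^{n+2s}}\,dy$ for $x$ near $\overline V$, which is smooth (indeed $C^\infty$) there with all norms bounded by $\|u\|_{L^\infty(\mathbb{R}^n)}$ times geometric constants. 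So $v:=\eta u$ (compactly supported) solves $-\Delta v + (-\Delta)^s v = \tilde h$ on a neighborhood of $\overline V$, where $\tilde h = \eta h + [\text{lower-order commutator terms from }\Delta(\eta u)] + (-\Delta)^s((1-\eta)u)\in C^\alpha$. Now one applies the global Schauder estimate for the mixed operator acting on compactly supported data — here I would invoke either the working-space framework the authors allude to (distinct from \cite{BDVV22b}) or, more directly, the fact that for $s\in(0,1)$ the operator $(-\Delta)^s$ maps $C^{2,\alpha}$ to $C^{\alpha}$ boundedly on $\mathbb{R}^n$ when $2s<2$, so $-\Delta v = \tilde h - (-\Delta)^s v$ and a fixed-point/iteration in $C^{2,\alpha}$ (treating $(-\Delta)^s v$ as a perturbation, using that $(-\Delta)^s: C^{2,\alpha}_c\to C^{\alpha}$ with a norm that can be absorbed after localization, or a continuity-method argument) yields $v\in C^{2,\alpha}$ with $\|v\|_{C^{2,\alpha}}\leqslant C(\|\tilde h\|_{C^\alpha} + \|v\|_{L^\infty})$.

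Then I would unwind the estimates: $\|u\|_{C^{2,\alpha}(\overline V)} = \|v\|_{C^{2,\alpha}(\overline V)} \leqslant C(\|\tilde h\|_{C^\alpha(\mathbb{R}^n)} + \|u\|_{L^\infty(\mathbb{R}^n)}) \leqslant C(\|h\|_{C^\alpha(\overline W)} + \|u\|_{C^{1,\alpha}(\overline W)} + \|u\|_{L^\infty(\mathbb{R}^n)})$, and substituting the composition bound for $\|h\|_{C^\alpha}$ together with \eqref{final inequality } for $\|u\|_{C^{1,\alpha}(\overline W)}$ gives exactly \eqref{final inequality 2 } after choosing $W = V_{\rho/2}$-type neighborhoods so the geometric constants depend only on $n,s,\alpha,V$. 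A covering argument (finitely many balls) reduces the general $V$ to small balls, as in the statement. The main obstacle I anticipate is the Schauder estimate for the mixed operator in the right function space: one must handle the interaction of the local term $-\Delta$ (which wants $C^{2,\alpha}$ data) with the nonlocal term $(-\Delta)^s$ (which, for $s$ close to $1$, is ``almost second order'' and cannot be treated as a harmless perturbation by a naive iteration) — this is precisely why the result holds for all $s\in(0,1)$ only for \emph{interior} estimates and why the boundary version later needs $s<\tfrac12$. Making the perturbation argument work uniformly up to $s\to1^-$ will require care, e.g. exploiting that on compactly supported functions $(-\Delta)^s$ can be written as $(-\Delta)\circ (-\Delta)^{s-1}$ with $(-\Delta)^{s-1}$ smoothing, or running a direct continuity method in $s$ with the a priori estimate, rather than a crude contraction.
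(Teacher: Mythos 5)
Your overall strategy coincides with the paper's: bootstrap from the interior $C^{1,\alpha}$ estimate so that $g(\cdot,u(\cdot))\in C^\alpha$ locally (this composition step is indeed where the product $\|g\|_{C^\alpha}(1+\|g\|_{C^\alpha})$ in \eqref{final inequality 2 } comes from), localize with a cutoff, observe that the nonlocal tail $(-\Delta)^s((1-\phi^R)u)$ is $C^\alpha$ near the center with norm controlled by $\|u\|_{L^\infty(\mathbb{R}^n)}$, and then run the classical Schauder estimate for $-\Delta$ treating the remaining fractional term as a perturbation. But you leave the one genuinely delicate step --- absorbing that perturbation when $s$ is not small --- as an acknowledged open issue with two unexecuted suggestions, and this is exactly where the proof has content. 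The paper's resolution (see \eqref{bb}) is quantitative: for $v\in C^{2,\alpha}_0(B_{2R}(x_0))$ one has $R^2|(-\Delta)^s v|^\prime_{0,\alpha;B_{R}(x_0)}\leqslant C\,|v|^\prime_{2,\alpha_0;B_{2R}(x_0)}$ with $\alpha_0=\alpha-(1-s)$ when $\alpha\geqslant 2-2s$ (and $\alpha_0=0$ otherwise); since $2s<2$, the exponent $\alpha_0$ is \emph{strictly} smaller than $\alpha$, so the interpolation inequality converts this into $\delta\,|v|^\prime_{2,\alpha}+C_\delta\|v\|_{L^\infty}$ with $\delta$ arbitrarily small. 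No uniformity as $s\to1^-$ is needed --- the theorem is for fixed $s$ and the constant is allowed to depend on $s$ --- so your worry about a ``crude contraction'' failing is a misdiagnosis; what is needed is precisely the strict gain of $2-2s$ derivatives, made explicit. Moreover, the small term $\delta|u_\epsilon|^\prime_{2,\alpha;B_{2R}}$ lives on a \emph{larger} ball than the left-hand side $B_{R/2}$ in \eqref{C 2 of v 2}, so it cannot be absorbed directly; the paper needs the covering/iteration lemma (Proposition~\ref{pro final estimate}, adapted from \cite{MR3331523}) to close the estimate. Your sketch does not address this mismatch of domains.

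A second, more minor gap: the Schauder computation is pointwise and requires the function it is applied to to be classically $C^{2,\alpha}$ already, whereas a priori $u$ is only a weak solution known (after the first step) to be $C^{1,\alpha}$. The paper handles this by mollifying first ($u_\epsilon=\eta_\epsilon\ast u$ is smooth and solves the mollified equation, the estimates are uniform in $\epsilon$) and then passing to the limit by Arzel\`a--Ascoli. Your alternative --- constructing a $C^{2,\alpha}$ solution of the localized linear problem by a fixed-point or continuity method and identifying it with $u$ by uniqueness --- could in principle substitute for this, but it is not carried out and would itself require the very a priori estimate whose justification is the issue above.
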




Now we also consider the global $ C^{2,\alpha} $-regularity up to the boundary:

\begin{Theorem}[$ C^{2,\alpha} $-regularity up to the boundary]\label{theorem C^2 alpha global}
	Suppose that $ \partial\Omega $ is of class  $ C^{2,\alpha} $. Let $ s\in(0,\frac{1}{2}) $ and $ \alpha\in(0,1) $ be such that
		\begin{equation}\label{alpha if s in (0,1/2)}
		\alpha+2s\leqslant 1.
		\end{equation}
	If  $ u$ is a weak solution of  \eqref{Maineq} and $ g\in C^\alpha(\overline\Omega\times\mathbb{R}) $ satisfies \eqref{eq: H 1}, then $u\in C^{2,\alpha}(\overline\Omega).$ 	
	
	Moreover, we have that
\begin{equation}
	\|u\|_{ C^{2,\alpha}(\overline\Omega)}\leqslant C \left(\|g\|_{C^\alpha(\overline{\Omega}\times{\rm I_u})}\|u\|^{2^*}_{L^\infty(\Omega)}+\|g\|_{C^\alpha(\overline{\Omega}\times{\rm I_u})}+\|u\|_{L^\infty(\Omega)}\right)
\end{equation}
for some positive constant $C$ depending only on $n,s,c,\alpha,\Omega$.
\end{Theorem}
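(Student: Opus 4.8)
The plan is to derive the global $C^{2,\alpha}$ bound by a bootstrap argument that upgrades the regularity obtained in the earlier theorems, splitting $\mathcal{L}u = g(x,u)$ as the local equation $-\Delta u = g(x,u) - (-\Delta)^s u =: h(x)$ and applying classical Schauder theory for the Laplacian on $\Omega$ with its $C^{2,\alpha}$ boundary. First I would record what is already known: by Theorem~\ref{th: regularity} we have $u\in L^\infty(\Omega)$ with the stated quantitative bound, and by Theorem~\ref{th:C^{1,alpha}} we have $u\in C^{1,\alpha'}(\overline\Omega)$ for every $\alpha'\in(0,\min\{1,2-2s\})$; since $s\in(0,1/2)$ we have $2-2s>1$, so in fact $u\in C^{1,\alpha'}(\overline\Omega)$ for all $\alpha'\in(0,1)$. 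The key observation is that, because $u=0$ in $\mathbb{R}^n\setminus\Omega$ and $u\in C^{1,1-s}$ say, with the restriction $\alpha+2s\le 1$ we have $1-s\ge s+\alpha$... more precisely, the Hölder regularity of $(-\Delta)^s u$ is governed by the regularity of $u$ minus $2s$ orders: if $u\in C^{1,\gamma}(\mathbb{R}^n)$ with $\gamma+1 > 2s$, then $(-\Delta)^s u \in C^{0,\gamma+1-2s}(\overline\Omega)$ locally, and more importantly if $u\in C^{2,\gamma}$ in the interior and behaves like a $C^{1,1}$ (indeed $C^{2,\alpha}$) function up to the boundary after the flattening, then $(-\Delta)^s u$ picks up $C^{\alpha}$ regularity up to $\partial\Omega$ precisely when $\alpha + 2s \le 1$ — this is the structural reason for hypothesis~\eqref{alpha if s in (0,1/2)}.

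The bootstrap would run as follows. Step one: from $u\in C^{1,\alpha'}(\overline\Omega)$ for all $\alpha'$, deduce that the ``remainder'' $x\mapsto(-\Delta)^s u(x)$ lies in $C^{0,\beta}(\overline\Omega)$ for $\beta := \min\{\alpha, 1-2s\}$ (here one uses the global $C^{1,\alpha'}$ bound together with the vanishing exterior data; the relevant estimate for $(-\Delta)^s$ on globally-$C^{1,\gamma}$ functions supported near $\overline\Omega$ should either be quoted from the literature on the fractional Laplacian — e.g. Silvestre-type estimates — or from the companion preparatory lemmas of this paper). Step two: the composition $x\mapsto g(x,u(x))$ lies in $C^{0,\alpha}(\overline\Omega)$ since $g\in C^\alpha(\overline\Omega\times\mathbb{R})$ and $u\in C^{0,1}(\overline\Omega)\subset C^{0,\alpha}(\overline\Omega)$ (composition of $C^\alpha$ with Lipschitz is $C^\alpha$). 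Hence the right-hand side $h(x) = g(x,u(x)) - (-\Delta)^s u(x)$ lies in $C^{0,\alpha}(\overline\Omega)$ — here the condition $\alpha + 2s \le 1$ guarantees $1-2s\ge\alpha$ so that $\beta=\alpha$ and no loss occurs. Step three: apply global Schauder estimates for $-\Delta u = h$ with zero Dirichlet boundary data on a $C^{2,\alpha}$ domain to conclude $u\in C^{2,\alpha}(\overline\Omega)$, with $\|u\|_{C^{2,\alpha}(\overline\Omega)} \le C(\|h\|_{C^\alpha(\overline\Omega)} + \|u\|_{L^\infty(\Omega)})$.

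For the quantitative estimate, I would track constants through each step: $\|(-\Delta)^s u\|_{C^\alpha(\overline\Omega)} \le C\|u\|_{C^{1,\alpha'}(\overline\Omega)}$ with $\alpha' = \alpha+2s\ (\le 1)$, and then by Theorem~\ref{th:C^{1,alpha}}, $\|u\|_{C^{1,\alpha'}(\overline\Omega)} \le C(1+\|u\|_{L^\infty(\Omega)}^{2^*})$; meanwhile $\|g(\cdot,u(\cdot))\|_{C^\alpha(\overline\Omega)} \le C\|g\|_{C^\alpha(\overline\Omega\times{\rm I_u})}(1+\|u\|_{C^{0,1}(\overline\Omega)})\le C\|g\|_{C^\alpha(\overline\Omega\times{\rm I_u})}(1+\|u\|_{L^\infty(\Omega)}^{2^*})$, using the $C^{1,\alpha'}$ bound again for the Lipschitz seminorm. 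Combining with the Schauder estimate and simplifying (absorbing the lower-order $\|u\|_{L^\infty}$ terms and collecting the product $\|g\|_{C^\alpha}\cdot\|u\|_{L^\infty}^{2^*}$) yields the displayed bound with $C=C(n,s,c,\alpha,\Omega)$.

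The main obstacle I anticipate is Step one: proving that $(-\Delta)^s u$ is genuinely $C^{0,\alpha}$ \emph{up to the boundary}, not just in the interior. Interior estimates for $(-\Delta)^s$ on $C^{1,\gamma}$ functions are standard, but near $\partial\Omega$ the function $u$ transitions to being identically zero outside $\Omega$, and one must verify that the global regularity $u\in C^{1,\alpha+2s}(\mathbb{R}^n)$ (which holds because $u\equiv 0$ outside and $u$ vanishes on $\partial\Omega$ together with the one-sided $C^{1,\alpha+2s}$ control from Theorem~\ref{th:C^{1,alpha}}, provided $\alpha+2s\le 1$ so that no incompatible second-order behavior is forced — this is exactly where the one-dimensional obstruction $ax+O(x^2)$ mentioned in the introduction enters, since for $s\ge 1/2$ the term $x^2$ would generate a non-Hölder contribution) is enough to run the singular-integral estimate uniformly across $\partial\Omega$. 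I would handle this by flattening the boundary locally, writing $(-\Delta)^s u$ as a principal-value integral, splitting into a near-diagonal part controlled by the $C^{1,\alpha+2s}$ seminorm of $u$ and a far part controlled by $\|u\|_{L^\infty}$, and checking the Hölder continuity of both pieces in $x$ ranging over $\overline\Omega$; the subtlety is purely in the near-diagonal term and its dependence on the boundary geometry, for which I would invoke (or adapt) the corresponding lemma already prepared in this paper or cite the analogous result in \cite{BDVV22b, SVWZ22}. Finally, the composition issue flagged in the introduction — that $g$ is only $C^\alpha$ while we want $C^{2,\alpha}$ output — is resolved precisely because $g$ enters the right-hand side of a \emph{second-order} equation, so one only needs $g(x,u(x))\in C^\alpha$, and no higher smoothness of $g$ or of the composition is required; this is why the hypothesis on $g$ is merely $C^\alpha$ and not $C^{1,\alpha}$ or $C^{2,\alpha}$.
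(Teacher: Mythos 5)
Your bootstrap is correct and follows a genuinely different route from the paper's. Both arguments hinge on the same key Hölder estimate for the fractional term (Lemma~\ref{lemma C^alpha estimate }, proved via Lemmas~\ref{lemma: C^alpha estimate of fraction laplacian } and~\ref{lemma: boundedness}; note that although it is stated for $u\in\mathcal{B}_\alpha(\Omega)$, the proof only uses $u\in C^{\alpha+2s}(\mathbb{R}^n)$), namely $\|(-\Delta)^s u\|_{C^\alpha(\overline\Omega)}\leqslant C\|u\|_{C^1(\overline\Omega)}$ for functions vanishing outside $\Omega$. From there, however, the paths diverge. The paper does not bootstrap on $u$ directly: it establishes an a priori Schauder-type bound for the family $\mathcal{L}_t=-\Delta+t(-\Delta)^s$ acting on $\mathcal{B}_\alpha(\Omega)$ via classical Schauder theory, interpolation and a maximum principle (Lemma~\ref{lemma facts relating to method of continutiy} and~\eqref{eq: sup u in Omega}), runs the method of continuity to conclude that $\mathcal{L}$ maps $\mathcal{B}_\alpha(\Omega)$ onto $C^\alpha(\overline\Omega)$, produces a classical solution $v$ of $\mathcal{L}v=g(\cdot,u(\cdot))$, and identifies $u=v$ by Lax--Milgram. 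You instead treat $(-\Delta)^s u$ as a known $C^\alpha$ forcing term --- legitimate, because Theorem~\ref{th:C^{1,alpha}} with $s<\frac12$ already gives $u\in C^{1,\alpha'}(\overline\Omega)$ for every $\alpha'<1$, hence $u$ is globally Lipschitz on $\mathbb{R}^n$ --- and apply global Schauder regularity to $-\Delta u = g(x,u)-(-\Delta)^s u$. Your route is shorter and avoids the method of continuity, but it has one step that should be made explicit: to pass from the weak formulation of $\mathcal{L}u=g$ to the weak formulation of $-\Delta u=h$, one needs the integration-by-parts identity $\iint\frac{(u(x)-u(y))(\varphi(x)-\varphi(y))}{|x-y|^{n+2s}}\,dx\,dy=2\int\varphi\,(-\Delta)^s u$, which indeed holds for globally Lipschitz, compactly supported $u$ and $s<\frac12$, but is not free. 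The paper's route buys a clean identification (Lax--Milgram uniqueness with no integration by parts) and, as a byproduct, the solvability in $\mathcal{B}_\alpha(\Omega)$ of the linear problem $\mathcal{L}u=f$ for every $f\in C^\alpha(\overline\Omega)$. Finally, your anticipated obstacle about near-boundary flattening and singular-integral analysis for $(-\Delta)^s u$ up to $\partial\Omega$ is more caution than is needed: as Lemma~\ref{lemma: C^alpha estimate of fraction laplacian } shows, the estimate requires nothing more than the global $C^{\alpha+2s}(\mathbb{R}^n)$ seminorm of $u$, which you already have because $u\in C^1(\overline\Omega)$ vanishes on $\partial\Omega$ and outside $\Omega$; no local coordinates or boundary-flattening are required.
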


\begin{remark}{\rm
In \cite[Theorem 1.6]{BDVV22b}, a related~$ C^{2,\alpha} $-regularity result is proved for linear equation. This result relies on the smoothness of the linear term and is not directly applicable to the nonlinear equation \eqref{Maineq}. 

In our context, for the interior $ C^{2,\alpha} $-regularity, we will deal with several technical issues coming from the nonlinearity $ g(x,u(x))$ to guarantee that one can carry out the iterative process.
For the $ C^{2,\alpha} $-regularity up to the boundary, we also treat the mixed operator with $s\in (0,\frac{1}{2})$ as a perturbation of the classical Laplacian and apply the classical results of local problems to obtain the main result.
}\end{remark}

The rest of this paper is organized as follows.
In Section~\ref{sec:preliminary}, we first introduce several preliminaries and define the working space.
Then, the $L^{\infty}$-boundedness of weak solutions
is established in Section~\ref{sec:regularity}, 
while Section~\ref{sec: C^alpha-regularity} is devoted to the~$C^{1,\alpha}$-regularity up to the boundary and  the interior ~$C^{1,\alpha}$-regularity. 

Next, the interior $C^{2,\alpha}$-regularity and the $C^{2,\alpha}$-regularity up to the boundary are shown in Section~ \ref{sec: C^2- regularity}. 

In the appendix we collect some simple consequences of the main results, such as a
strong maximum principle and a variational characterization for the principal eigenvalue.
These results are not stated in their maximal generality, but rather in a form which is ready to use
in nonlinear analysis problems (see~\cite{SVWZ}).

\section{Preliminaries and additional results}\setcounter{equation}{0}\label{sec:preliminary}
Let $ s\in(0,1)$ be given and $ \Omega\subset \mathbb{R}^{n}$ be an open bounded set.  
We introduce our working space, which is slightly different from  e.g.~\cite{BDVV22b,SVWZ22}. 

Let us first define the semi-norm 
\[
[u]_s:= \bigg(\int\!\!\!\int\nolimits_{\mathbb{R}^{2n}}\frac{|u(x)-u(y)|^2}{|x-y|^{n+2s}}\, dxdy\bigg)^{\frac{1}{2}}.
\]

In terms of the mixed  operator $\mathcal{L}$, a ``natural" function space to consider is the following space
\begin{equation}\label{X_1}
	X^1:=
	\left\{\begin{array}{ll}
		u:\mathbb{R}^{n}\rightarrow \mathbb{R} \text{ is Lebesgue measurable:} \vspace{0.2cm} \\ ~ u|_{\Omega} \in H^{1}(\Omega), ~~ [u]_s \text{ is finite}\\
	\end{array}
	\right\}
\end{equation}
equipped with the norm
\begin{equation*}
	\bigg(\|u\|_{H^{1}(\Omega)}^{2}+[u]_s^2 \bigg)^{\frac{1}{2}} .
\end{equation*}
 We denote $H_0^1(\Omega)$ as the closure of $C_c^\infty(\Omega)$ with respect to the $H^1(\Omega)$ norm, and consider the following space\footnote{We prefer to use the function space $H_0^1(\Omega)$ rather than $H^1(\Omega)$, or~$H^1(\mathbb{R}^n)$ with zero data outside~$\Omega$,
to avoid trace theorems which may impose additional regularity assumptions on~$\Omega$. Notice indeed
that no regularity on~$\Omega$ is assumed in Theorem~\ref{th: regularity}.
See however the forthcoming Lemma~\ref{lemma equivalence of norms} addressing the equivalence of the norms
involved in each of these choices.} of functions
\begin{equation}\label{working space}
	X^1_0:=
	\left\{\begin{array}{ll}
	u:\mathbb{R}^{n}\rightarrow \mathbb{R} \text{ is Lebesgue measurable:} \vspace{0.2cm} \\ ~ u|_{\Omega} \in H^{1}_{0}(\Omega), ~~ [u]_s \text{ is finite}, ~ ~ u=0 \text{ a.e. in  } \mathbb{R}^n\backslash \Omega \\
	\end{array}
	\right\}
\end{equation}
equipped with the norm
\begin{equation*}
\bigg(\|u\|_{H^{1}_0(\Omega)}^{2}+[u]_s^2 \bigg)^{\frac{1}{2}} .
\end{equation*}
Due to the Poincar\'e inequality, the above norm in $ X_{0}^1 $ is also equivalent to
\begin{equation}
\|u\|_{X_0^1} :=\bigg(\|\nabla u\|^{2}_{L^2(\Omega)}+[u]_s^2\bigg)^{\frac{1}{2}}.  \label{Xnorm}
\end{equation}
Obviously, $X^1_{0} $ is a Hilbert space equipped with the inner product
\begin{equation*}
\left\langle u,v \right\rangle_{X_{0}^1}=\int_{\Omega} \nabla u\cdot  \nabla v\, dx+\int\!\!\!\int_{\mathbb{R}^{2n}}\frac{(u(x)-u(y))(v(x)-v(y))}{|x-y|^{n+2s}}\, dxdy, \quad \forall\ u,v \in X_{0}^1.
\end{equation*}
The following is the definition of weak solutions of \eqref{Maineq} in $X_{0}^1$: 
\begin{Definition}[Weak solution]\label{definition of weak solution}
	We say that $ u\in X_{0}^1 $ is a weak solution of the mixed elliptic equation \eqref{Maineq},  if $ u $ satisfies 
	\begin{equation}
	\begin{split} \label{weaksolution}
	\displaystyle\int_{\Omega}\left\langle \nabla u,\nabla \varphi \right\rangle +& \displaystyle\int\!\!\!\int\nolimits_{\mathbb{R}^{2n}}\frac{(u(x)-u(y))(\varphi(x)-\varphi(y))}{|x-y|^{n+2s}}\, dxdy\\
	&=\int_{\Omega}g(x,u(x))\varphi(x)\, dx, \quad \text{  for any }  \varphi \in X_{0}^1.
	\end{split}
	\end{equation}
 Here, for typographical simplicity, we take the factor $\frac{c_{n,s}}{2}$ equal to $1$ in front of the inner product in~$H^s(\mathbb{R}^n)$ between $u$ and $\varphi$ since its explicit value does not play a role here.
\end{Definition}

We investigate some key facts of $ X_0^1 $ in the following lemmata on equivalence of norms,  embedding and density results.

\begin{Lemma}\label{lemma equivalence of norms}
In the space $ X_{0}^1 $, we have the following equivalence of norms:
\[
||\cdot||_{X_0^1}\sim ||\cdot||_{H^1_0(\Omega)}. 
\]
\end{Lemma}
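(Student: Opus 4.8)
The plan is to establish the norm equivalence $\|\cdot\|_{X_0^1} \sim \|\cdot\|_{H^1_0(\Omega)}$ by showing that the Gagliardo seminorm $[u]_s$ is controlled by the $H^1_0(\Omega)$-norm for every $u \in X_0^1$; the reverse inequality is trivial since $\|u\|_{H^1_0(\Omega)} \leqslant \left(\|u\|_{H^1_0(\Omega)}^2 + [u]_s^2\right)^{1/2}$ and, after invoking the Poincaré inequality on the bounded domain $\Omega$ (as already noted around \eqref{Xnorm}), $\|u\|_{H^1_0(\Omega)}$ is comparable to $\|\nabla u\|_{L^2(\Omega)}$. So the whole content is the bound
\[
[u]_s \leqslant C(n,s,\Omega)\,\|u\|_{H^1_0(\Omega)} \qquad \text{for all } u \in X_0^1.
\]

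First I would reduce to smooth functions: since $u \in X_0^1$ means $u|_\Omega \in H^1_0(\Omega)$ and $u \equiv 0$ a.e. outside $\Omega$, pick $\varphi_k \in C_c^\infty(\Omega)$ with $\varphi_k \to u$ in $H^1(\Omega)$ (and extend each $\varphi_k$ by zero, so $\varphi_k \to u$ in $H^1(\mathbb{R}^n)$). It then suffices to prove the estimate for $\varphi \in C_c^\infty(\mathbb{R}^n)$ with a constant independent of $\varphi$, and pass to the limit using Fatou's lemma on the double integral defining $[u]_s^2$. For $\varphi \in C_c^\infty(\mathbb{R}^n)$ one splits the Gagliardo integral into the region $\{|x-y| < 1\}$ and the region $\{|x-y| \geqslant 1\}$. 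On the far region, $\int\!\!\int_{|x-y|\geqslant 1} \frac{|\varphi(x)-\varphi(y)|^2}{|x-y|^{n+2s}}\,dxdy \leqslant 4\|\varphi\|_{L^2(\mathbb{R}^n)}^2 \int_{|z|\geqslant 1}\frac{dz}{|z|^{n+2s}} = C(n,s)\|\varphi\|_{L^2}^2$, and $\|\varphi\|_{L^2(\mathbb{R}^n)} = \|\varphi\|_{L^2(\Omega)} \leqslant C(\Omega)\|\nabla\varphi\|_{L^2(\Omega)}$ by Poincaré. On the near region, write $\varphi(x)-\varphi(y) = \int_0^1 \nabla\varphi(y + t(x-y))\cdot(x-y)\,dt$, so $|\varphi(x)-\varphi(y)|^2 \leqslant |x-y|^2 \int_0^1 |\nabla\varphi(y+t(x-y))|^2\,dt$; substituting $z = x-y$, using Tonelli and the translation-invariance of Lebesgue measure gives
\[
\int\!\!\!\int_{|x-y|<1} \frac{|\varphi(x)-\varphi(y)|^2}{|x-y|^{n+2s}}\,dxdy \leqslant \|\nabla\varphi\|_{L^2(\mathbb{R}^n)}^2 \int_{|z|<1} \frac{dz}{|z|^{n+2s-2}} = C(n,s)\,\|\nabla\varphi\|_{L^2(\Omega)}^2,
\]
the last integral converging because $n+2s-2 < n$. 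Adding the two pieces yields $[u]_s^2 \leqslant C(n,s,\Omega)\,\|\nabla u\|_{L^2(\Omega)}^2$, which combined with Poincaré gives the claimed equivalence.

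The only mild subtlety — and the step I would be most careful with — is the density/approximation argument: one must ensure that the zero-extension of the $H^1_0(\Omega)$-approximants genuinely converges to the zero-extension of $u$ in $H^1(\mathbb{R}^n)$ (this is where the choice of $H^1_0(\Omega)$ over $H^1(\Omega)$ in the definition of $X_0^1$ pays off, exactly as the footnote anticipates, since it sidesteps trace theorems and any boundary regularity of $\Omega$), and then that Fatou's lemma legitimately transfers the uniform bound on $[\varphi_k]_s$ to a bound on $[u]_s$. Everything else is a routine splitting of the Gagliardo integral; no regularity of $\partial\Omega$ and no embedding beyond the elementary Poincaré inequality on a bounded set is needed.
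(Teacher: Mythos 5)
Your proof is correct, and it takes a mildly but genuinely different route from the paper's. The paper first splits the Gagliardo integral geometrically, into the piece over $\Omega\times\Omega$ (which it disposes of by citing the embedding estimate from \cite{DPV12}) and the cross terms over $\Omega\times(\mathbb{R}^n\setminus\Omega)$, and only then splits the cross term by $|z|\leqslant 1$ versus $|z|>1$, using the fundamental theorem of calculus on the near part exactly as you do. You instead split globally by $|x-y|<1$ versus $|x-y|\geqslant 1$ over all of $\mathbb{R}^{2n}$, which treats the $\Omega\times\Omega$ and cross contributions in one stroke and makes the argument self-contained (no external citation needed); the price is the extra approximation layer $C_c^\infty(\Omega)\ni\varphi_k\to u$ plus Fatou, whereas the paper works directly with $u$, implicitly using that the difference-quotient bound $|u(x+z)-u(x)|\leqslant\int_0^1|\nabla u(x+tz)||z|\,dt$ is available a.e.\ for $H^1_0$ functions extended by zero. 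Your density step is sound precisely for the reason you flag: the zero-extension of an $H^1_0(\Omega)$ function lies in $H^1(\mathbb{R}^n)$ with gradient equal to the zero-extension of the gradient, so no boundary regularity or trace theorem enters, and Fatou (along an a.e.\ convergent subsequence) transfers the uniform bound $[\varphi_k]_s^2\leqslant C\|\nabla\varphi_k\|_{L^2}^2$ to $u$. Both arguments yield the same constant dependence $C=C(n,s,\Omega)$, with the only use of $s<1$ being the convergence of $\int_{|z|<1}|z|^{2-n-2s}\,dz$, as in the paper.
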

\begin{proof} 
	Let $ u\in H_0^1(\Omega) $ and $ u=0 $ a.e. in $ \mathbb{R}^n\backslash\Omega $.
	Using the change of variable $ z=y-x $, one has 
	\begin{equation}
	\begin{split}\label{H1 -X}
	&\int_{\Omega}\int_{\mathbb{R}^{n}\backslash\Omega}\frac{|u(x)|^2}{|x-y|^{n+2s}}\, dydx\\
	=&\int_{\Omega}\int_{\{x+z\notin\Omega\}\cap\{|z|\leqslant 1\}}\frac{|u(x)|^2}{|z|^{n+2s}}\, dzdx
	+\int_{\Omega}\int_{\{x+z\notin\Omega\}\cap\{|z|>1\}}\frac{|u(x)|^2}{|z|^{n+2s}}\, dzdx\\
	=&:A_1+A_2.
	\end{split}
	\end{equation}
	
	We observe that when~$x+z$ lies in the complement of~$\Omega$ we have
\begin{eqnarray*}&&
|u(x)|^2=|u(x+z)-u(x)|^2=\left| \int_0^1 \nabla u(x+tz)\cdot z\,dt\right|^2\le \int_0^1 |\nabla u(x+tz)|^2 |z|^2\,dt
\end{eqnarray*}
and therefore, making use of the change of variable~$y:=x+tz$ and the fact that~$u$ vanishes outside~$\Omega$,
we find that 
	\begin{equation}
	\begin{split}\label{A1}
	A_1\leqslant C_1(n,s)\|u\|_{H_0^1(\Omega)}^2.
	\end{split}
	\end{equation}
	
	Moreover,
	\begin{equation}\label{A2}
	A_2\leqslant C_2(n,s)\|u\|_{L^2(\Omega)}^2.
	\end{equation}
	Also, it follows from \cite{DPV12} that 
	\begin{equation}\label{A3}
	\int\!\!\!\int\nolimits_{\Omega\times\Omega}\frac{|u(x)-u(y)|^2}{|x-y|^{n+2s}}\, dxdy\leqslant C_3(n,s)\|u\|_{H_0^1(\Omega)}^2.
	\end{equation}
	Combining \eqref{A1}, \eqref{A2} and \eqref{A3}, we deduce that 
	\begin{equation*}
	[u]_s^2\leqslant C(n,s)\|u\|_{H_0^1(\Omega)}^2,
	\end{equation*}
	for some suitable constant $ C(n,s). $ 
	
	As a consequence of this, we obtain the equivalence  between the two norms $||\cdot||_{X_0^1}$ and $||\cdot||_{H^1_0(\Omega)}$.
\end{proof}

\begin{Lemma}\label{lemma: embedding continuous}
	The embedding $ X_{0}^1\hookrightarrow L^{2^*}(\Omega) $ is continuous.
\end{Lemma}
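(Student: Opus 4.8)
The plan is to reduce the asserted embedding to the classical Sobolev (and Morrey) embedding of $H^1_0(\Omega)$, observing that membership in $X^1_0$ already carries all the information needed and that the fractional seminorm plays no role here.

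First I would note that, directly from the definition of the norm in~\eqref{Xnorm}, one has the elementary bound $\|\nabla u\|_{L^2(\Omega)}\leqslant \|u\|_{X_0^1}$ for every $u\in X_0^1$. Moreover, by the definition of $X^1_0$ in~\eqref{working space}, the restriction $u|_\Omega$ lies in $H^1_0(\Omega)$ and $u$ vanishes a.e.\ outside $\Omega$; hence $u$ is the zero-extension to $\mathbb{R}^n$ of an $H^1_0(\Omega)$ function, and therefore $u\in H^1(\mathbb{R}^n)$ with $\nabla u$ supported in $\Omega$. (Alternatively, one may simply invoke Lemma~\ref{lemma equivalence of norms}, which already provides $\|u\|_{H^1_0(\Omega)}\leqslant C\,\|u\|_{X_0^1}$.)

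Next, when $n\geqslant 3$ I would apply the Gagliardo--Nirenberg--Sobolev inequality on $\mathbb{R}^n$ to $u$: there is a dimensional constant $C=C(n)>0$ such that
\[
\|u\|_{L^{2^*}(\Omega)}=\|u\|_{L^{2^*}(\mathbb{R}^n)}\leqslant C\,\|\nabla u\|_{L^2(\mathbb{R}^n)}=C\,\|\nabla u\|_{L^2(\Omega)}\leqslant C\,\|u\|_{X_0^1},
\]
which is exactly the claimed continuity. When $n\leqslant 2$, so that $2^*=\infty$, the conclusion instead follows from the standard embeddings for the bounded set $\Omega$: Morrey's inequality gives $H^1_0(\Omega)\hookrightarrow L^\infty(\Omega)$ when $n=1$, while when $n=2$ one uses that $H^1_0(\Omega)\hookrightarrow L^q(\Omega)$ continuously for every finite $q$, together with $|\Omega|<\infty$. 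In all cases one obtains $\|u\|_{L^{2^*}(\Omega)}\leqslant C(n,\Omega)\,\|u\|_{X_0^1}$, which is the assertion.

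There is no genuinely hard step here; the only points deserving a word of care are the (standard) fact that the zero-extension of an $H^1_0(\Omega)$ function lies in $H^1(\mathbb{R}^n)$, so that the global Sobolev inequality may legitimately be applied, and the harmless bookkeeping at the borderline dimension $n=2$, where $2^*$ must be read in the conventional sense fixed above. In particular, the fractional seminorm $[u]_s$ is not used in this lemma; it will only enter in the sharper embedding and compactness properties of $X^1_0$.
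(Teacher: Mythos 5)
Your argument takes essentially the same route as the paper's one-line proof, namely reducing the claim to the classical Sobolev embedding of $H^1_0(\Omega)$ together with the elementary inequality $\|\nabla u\|_{L^2(\Omega)}\leqslant\|u\|_{X_0^1}$. For $n\geqslant3$ this is sound, and your added observations (zero extension to $\mathbb{R}^n$ so that the global Gagliardo--Nirenberg--Sobolev inequality applies; Morrey for $n=1$) are both correct.

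The step you wrote for $n=2$, however, does not deliver the conclusion. With the paper's convention $2^*=\infty$ for $n\leqslant2$, the claimed inequality reads $\|u\|_{L^\infty(\Omega)}\leqslant C\,\|u\|_{X_0^1}$; but knowing that $H^1_0(\Omega)\hookrightarrow L^q(\Omega)$ for every finite $q$ (with constants that blow up as $q\to\infty$) does not imply a bound in $L^\infty$, and indeed $H^1_0(\Omega)\not\hookrightarrow L^\infty(\Omega)$ when $n=2$ (for example $u(x)=\log\log(e/|x|)$ near the origin, suitably cut off, lies in $H^1$ but is unbounded). Since Lemma~\ref{lemma equivalence of norms} gives $\|\cdot\|_{X_0^1}\sim\|\cdot\|_{H^1_0(\Omega)}$, the fractional seminorm offers no extra control here, so the asserted embedding is actually false for $n=2$ under the stated convention. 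To be fair, the paper's own proof simply invokes ``the Sobolev inequality'' with no special treatment of $n=2$, so this gap is inherited rather than introduced by you; still, your closing sentence ``in all cases one obtains $\|u\|_{L^{2^*}(\Omega)}\leqslant C(n,\Omega)\,\|u\|_{X_0^1}$'' is not a valid deduction from what precedes it when $n=2$.
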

\begin{proof}
	Let $ u\in X_0^1$, invoking the Sobolev inequality, and we have 
	$$ \|u\|_{L^{2^*}(\Omega)}\leqslant C\|u\|_{H^1_0(\Omega)}\leqslant C\|u\|_{X_0^1} $$
for some suitable positive constant $ C$.
\end{proof}
\begin{Lemma}\label{lemma densely embedding}
	Let $\mathscr{C}_0^\infty:=\left\{u\in C(\mathbb{R}^n): u=0 \text{ in }\mathbb{R}^n\backslash \Omega, \;u|_{\Omega}\in C_0^\infty(\Omega) \right\} $ and denote by $\overline{\mathscr{C}_0^\infty}$ the closure of $ \mathscr{C}_0^\infty $ with respect to $X_0^1$-norm. Then, we have that~$ \overline{\mathscr{C}_0^\infty} = X_0^1 $.
\end{Lemma}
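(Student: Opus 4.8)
The plan is to establish the two inclusions $\overline{\mathscr{C}_0^\infty}\subseteq X_0^1$ and $X_0^1\subseteq\overline{\mathscr{C}_0^\infty}$ separately, the first being essentially a completeness remark and the second carrying the actual content. For the inclusion $\overline{\mathscr{C}_0^\infty}\subseteq X_0^1$, I would first note that every $u\in\mathscr{C}_0^\infty$ already lies in $X_0^1$: its restriction to $\Omega$ belongs to $C_c^\infty(\Omega)\subseteq H_0^1(\Omega)$, the Gagliardo seminorm $[u]_s$ is finite by (the proof of) Lemma~\ref{lemma equivalence of norms}, and $u=0$ outside $\Omega$. Since $X_0^1$ is a Hilbert space, hence complete, and the norm used to form the closure is exactly the restriction of $\|\cdot\|_{X_0^1}$, any $X_0^1$-Cauchy sequence from $\mathscr{C}_0^\infty$ converges inside $X_0^1$, so the closure stays in $X_0^1$.

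For the reverse inclusion, fix $u\in X_0^1$, so that $u|_\Omega\in H_0^1(\Omega)$, $[u]_s<\infty$, and $u=0$ a.e.\ in $\mathbb{R}^n\setminus\Omega$. By the definition of $H_0^1(\Omega)$ as the $H^1(\Omega)$-closure of $C_c^\infty(\Omega)$, I would pick $\varphi_k\in C_c^\infty(\Omega)$ with $\varphi_k\to u$ in $H^1(\Omega)$; extending each $\varphi_k$ by zero yields elements of $\mathscr{C}_0^\infty$. It then suffices to show $\varphi_k\to u$ in the norm~\eqref{Xnorm}. The gradient part $\|\nabla\varphi_k-\nabla u\|_{L^2(\Omega)}\to0$ is immediate from the construction. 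For the fractional part, set $w_k:=\varphi_k-u$: this function vanishes a.e.\ outside $\Omega$, its restriction to $\Omega$ lies in $H_0^1(\Omega)$, and $\|w_k\|_{H_0^1(\Omega)}\to0$. Hence the estimate proved inside Lemma~\ref{lemma equivalence of norms}, namely $[w]_s^2\leqslant C(n,s)\|w\|_{H_0^1(\Omega)}^2$ for every $w\in H_0^1(\Omega)$ extended by zero, applies to $w_k$ and gives $[\varphi_k-u]_s^2\leqslant C(n,s)\|\varphi_k-u\|_{H_0^1(\Omega)}^2\to0$. Therefore $\|\varphi_k-u\|_{X_0^1}\to0$, which shows $u\in\overline{\mathscr{C}_0^\infty}$; combined with the first inclusion, this yields $\overline{\mathscr{C}_0^\infty}=X_0^1$.

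The hard part is essentially nonexistent once Lemma~\ref{lemma equivalence of norms} is in hand; the only point that genuinely needs care is that the Gagliardo seminorm of the approximation error $\varphi_k-u$ be bounded by its $H_0^1(\Omega)$-norm with a constant \emph{independent of $k$}, which is precisely the (function-independent) content of that lemma, and that the zero-extension of $\varphi_k-u$ legitimately falls within its scope, which holds because both $\varphi_k$ and $u$ vanish outside $\Omega$. No regularity on $\partial\Omega$ or trace theorems are needed anywhere in this argument.
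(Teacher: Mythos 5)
Your proof is correct and follows the same two-step scheme the paper sketches: the forward inclusion is a completeness observation, and the reverse inclusion uses the $H^1$-density of $C_c^\infty(\Omega)$ in $H_0^1(\Omega)$ together with the uniform bound $[w]_s^2\leqslant C(n,s)\|w\|_{H_0^1(\Omega)}^2$ from Lemma~\ref{lemma equivalence of norms} applied to the error $\varphi_k-u$. You have simply written out the details that the paper leaves implicit; there is no substantive difference in approach.
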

\begin{proof}
	It  is not difficult to see that  $ \overline{\mathscr{C}_0^\infty}\subset X^1_0 . $
	Besides, according to Lemma~\ref{lemma equivalence of norms}  and the fact $ C_0^\infty(\Omega) $ is dense in $ H_0^1(\Omega) $, one can deduce that $ X^1_0\subset\overline{\mathscr{C}_0^\infty}. $
\end{proof}
	
\section{$L^{\infty}$-regularity}\label{sec:regularity}\setcounter{equation}{0}

In this section, we show that the weak solutions $u\in X_0^1$ are bounded
in $ L^{\infty}(\Omega) $. To prove this result, we will use the
Moser iteration method.

To begin with, we construct an auxiliary function $\varphi$ and provide several fundamental properties of $\varphi$, which are useful for the iterative procedure.
\begin{Lemma}\label{lemma  auxiliary function}
	Given $ \beta> 1 $ and $ T>0 $, we define 
	\begin{equation}\label{vsdvdsc}
		\varphi(t)=
		\begin{cases}
			-\beta T^{\beta-1}(t+T)+T^\beta, &\text{  if }t\leqslant -T\\
			|t|^\beta, &\text{  if } -T<t<T,\\
			\beta T^{\beta-1}(t-T)+T^\beta, &\text{  if }t\geqslant T.
		\end{cases}
	\end{equation}
	{Then $\varphi$ satisfies the following properties:
		\begin{itemize}
			\item[(a)] $\varphi\in C^1(\mathbb{R}, \mathbb{R})  $ is a convex function.
			\item[(b)] If $ u\in X_0^1 $, then  $ \varphi(u)$ and $\varphi(u)\varphi'(u)\in X_0^1$.
	\end{itemize}
\begin{proof}
 It is immediate to check that $\varphi\in C^1(\mathbb{R}, \mathbb{R})  $ is a convex function with
	\begin{equation*}
		\varphi'(t)=
		\begin{cases}
			-\beta T^{\beta-1}, &\text{  if }t< -T,\\
			-\beta(-t)^{\beta-1}, &\text{  if } -T\leqslant t\leqslant 0,\\
			\beta t^{\beta-1}, &\text{  if } 0\leqslant t\leqslant T,\\
			\beta T^{\beta-1}, &\text{  if } t>T,
		\end{cases}
	\end{equation*}
	and
	\begin{equation*}
		\varphi''(t)=\begin{cases}
			\beta(\beta-1)t^{\beta-2}, \qquad & 0<t<T,\\
			\beta(\beta-1)(-t)^{\beta-2}, \qquad & -T<t<0,\\
			0,& \text{ otherwise}
		\end{cases}
	\end{equation*}
	in the sense of distributions. This proves (a).
	
	We now observe that $\varphi(u)=0 $ a.e. in $ \mathbb{R}^n\backslash \Omega $. Furthermore, since $ \varphi $ is Lipschitz with Lipschitz constant $ L=\beta T^{\beta-1} $, one has that 
\begin{equation}\label{ svdsvsd}
	\begin{split}
		\|\varphi(u)\|^2_{L^2(\Omega)}&\leqslant \int_{\left\{|u|<T\right\}} T^{2\beta}\, dx+\int_{\left\{|u|>T\right\}} \left(\beta T^{\beta-1}(|u|-T)+T^\beta\right)^2\, dx\\
		&\leqslant 2\beta^2T^{2\beta-2}\|u\|^2_{L^2(\Omega)}+(4\beta^2+5)T^{2\beta}|\Omega|<\infty,
	\end{split}
\end{equation}
and	\begin{equation}
		\begin{split}
			\|\varphi(u)\|^2_{X_0^1}&=\|\nabla( \varphi(u))\|^2_{L^2(\Omega)}+\int\!\!\!\int\nolimits_{\mathbb{R}^{2n}}\frac{|\varphi(u(x))-\varphi(u(y))|^2}{|x-y|^{n+2s}}\, dxdy  \\
			&\leqslant L^2\bigg(\|\nabla u\|^2_{L^2(\Omega)}+\int\!\!\!\int\nolimits_{\mathbb{R}^{2n}}\frac{|u(x)-u(y)|^2}{|x-y|^{n+2s}}\, dxdy\bigg)<\infty.
			\label{4.1}
		\end{split}
	\end{equation}
Thus, we have that $\varphi(u)\in X_0^1$.

We now prove that $\varphi(u)\varphi'(u)\in X_0^1 $. Owing to Lemma~\ref{lemma equivalence of norms}, one has that 
\begin{equation}\label{vvrwdcs}
	\begin{split}
		&\|\varphi(u)\varphi'(u)\|^2_{X_0^1}\\
		=&C\|\varphi(u)\varphi'(u)\|^2_{H_0^1(\Omega)}\\
		\leqslant& C
		\left(\int_{\Omega}|\varphi(u)\varphi'(u)|^2\, dx+\int_{\Omega}|\nabla u|^2|\varphi'(u)|^4\, dx+\int_{\Omega}|\nabla u|^2|\varphi(u)\varphi''(u)|^2\, dx\right)\\
		\leqslant&  C
		\left(\beta^4T^{4\beta-4}\|u\|_{X_0^1}^2+(4\beta^2+5)\beta^2T^{4\beta-2}|\Omega|+\beta^2(\beta-1)^2T^{4\beta-4}\|u\|_{X_0^1}^2\right)<+\infty
	\end{split}
\end{equation}
for some constant $C$ depending on $n,s,\Omega$. This implies that $\varphi(u)\varphi'(u)\in X_0^1 $. The proof of (b) is thus complete.
\end{proof}
}
	
\end{Lemma}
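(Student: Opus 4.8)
The proof has two parts: part (a) is an elementary computation, and part (b) is a functional-analytic statement whose only genuine difficulty is the behaviour of $\varphi''$ near the origin when $\beta\in(1,2)$. The overall strategy for (b) is to show that \emph{both} $\varphi$ and $\varphi\varphi'$ are globally Lipschitz functions vanishing at $0$, and then to invoke the (standard) fact that composing an element of $X^1_0$ with such a map keeps it in $X^1_0$.

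\textbf{Part (a).} I would differentiate $\varphi$ branch by branch, splitting the middle branch into $[-T,0]$ and $[0,T]$, and check the $C^1$-matching at the junctions: at $t=\pm T$ the one-sided derivatives of the affine pieces equal $\pm\beta T^{\beta-1}$, which agrees with the derivative of $|t|^\beta$ there, while at $t=0$ both one-sided derivatives of $|t|^\beta$ vanish since $\beta>1$. This yields $\varphi\in C^1(\mathbb{R})$ with the stated $\varphi'$; convexity then follows because $\varphi'$ is non-decreasing on $\mathbb{R}$ (equivalently $\varphi''=\beta(\beta-1)|t|^{\beta-2}\geqslant 0$ on $(-T,T)\setminus\{0\}$, $\varphi''=0$ elsewhere, with no singular part at $\pm T$ as $\varphi'$ is continuous there).

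\textbf{Part (b), the first term.} For any Lipschitz $\Phi:\mathbb{R}\to\mathbb{R}$ with $\Phi(0)=0$ and Lipschitz constant $\Lambda$, and any $u\in X^1_0$, one has $\Phi(u)\in X^1_0$: indeed $\Phi(u)=0$ a.e. in $\mathbb{R}^n\setminus\Omega$ because $u$ is; the chain rule for Sobolev functions composed with a Lipschitz outer map gives $\nabla(\Phi(u))=\Phi'(u)\nabla u$ a.e. with $|\nabla(\Phi(u))|\leqslant\Lambda|\nabla u|$ and $|\Phi(u)|\leqslant\Lambda|u|$, so $\Phi(u)|_\Omega\in H^1(\Omega)$, and membership in $H^1_0(\Omega)$ follows by approximating $u$ in $H^1$ by functions of $C_c^\infty(\Omega)$ (here $\Phi(0)=0$ keeps the approximants compactly supported, and the fact that $\varphi\in C^1$ makes the limiting argument clean); finally $[\Phi(u)]_s\leqslant\Lambda[u]_s<\infty$ from $|\Phi(u(x))-\Phi(u(y))|\leqslant\Lambda|u(x)-u(y)|$. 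Applying this to $\Phi=\varphi$, with $\Lambda=\sup_{\mathbb{R}}|\varphi'|=\beta T^{\beta-1}$ and the branchwise bound $|\varphi(t)|\leqslant\beta T^{\beta-1}|t|$, gives $\varphi(u)\in X^1_0$.

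\textbf{Part (b), the second term — the main obstacle.} It remains to prove that $\psi:=\varphi\varphi'$ is globally Lipschitz with $\psi(0)=0$. Off the points $\pm T$ one computes $\psi'=(\varphi')^2+\varphi\varphi''$, and while $(\varphi')^2\leqslant\beta^2T^{2\beta-2}$ is harmless, $\varphi''(t)=\beta(\beta-1)|t|^{\beta-2}$ blows up at $t=0$ when $\beta<2$; this is the point where some care is required. The resolution is the cancellation
\begin{equation*}
|\varphi(t)\varphi''(t)|=\beta(\beta-1)\,|t|^{\beta}|t|^{\beta-2}=\beta(\beta-1)\,|t|^{2\beta-2}\leqslant\beta(\beta-1)\,T^{2\beta-2}\qquad\text{on }(-T,T),
\end{equation*}
which is valid precisely because $2\beta-2>0$; in fact $\varphi\varphi''\to0$ as $t\to0$, so $\psi$ is $C^1$ near $0$ with $\psi'(0)=0$, hence globally Lipschitz on $\mathbb{R}$ (with finitely many corners, at $0$ and $\pm T$). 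The mechanism of the previous paragraph then applies with $\Phi=\psi$. Alternatively, invoking Lemma~\ref{lemma equivalence of norms} one reduces to bounding $\|\varphi(u)\varphi'(u)\|_{H^1_0(\Omega)}$, expands $\nabla(\varphi(u)\varphi'(u))=\big((\varphi'(u))^2+\varphi(u)\varphi''(u)\big)\nabla u$ by the product and chain rules, and controls the three resulting integrals $\int_\Omega|\varphi(u)\varphi'(u)|^2$, $\int_\Omega|\nabla u|^2|\varphi'(u)|^4$ and $\int_\Omega|\nabla u|^2|\varphi(u)\varphi''(u)|^2$ using the $L^2$-bound already obtained for $\varphi(u)$ together with the pointwise bounds $|\varphi'(u)|^4\leqslant\beta^4T^{4\beta-4}$ and $|\varphi(u)\varphi''(u)|^2\leqslant\beta^2(\beta-1)^2T^{4\beta-4}$; each integral is finite since $u\in X^1_0\subset H^1_0(\Omega)$, so $\varphi(u)\varphi'(u)\in X^1_0$, completing (b).
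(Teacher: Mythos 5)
Your proof is correct, and your main route is organized a bit differently from the paper's. For the harder membership $\varphi(u)\varphi'(u)\in X^1_0$, you first prove that $\psi:=\varphi\varphi'$ is itself a globally Lipschitz function with $\psi(0)=0$, isolating the key cancellation $|\varphi(t)\varphi''(t)|=\beta(\beta-1)|t|^{2\beta-2}\leqslant\beta(\beta-1)T^{2\beta-2}$ that tames the blow-up of $\varphi''$ near $0$ when $\beta\in(1,2)$, and then you invoke once and for all the fact that composing an $X^1_0$ function with a Lipschitz map vanishing at $0$ stays in $X^1_0$. The paper instead applies the Lipschitz-composition mechanism only to $\varphi$, and for $\varphi\varphi'$ it switches to Lemma~\ref{lemma equivalence of norms} and directly bounds the three integrals $\int_\Omega|\varphi(u)\varphi'(u)|^2$, $\int_\Omega|\nabla u|^2|\varphi'(u)|^4$, $\int_\Omega|\nabla u|^2|\varphi(u)\varphi''(u)|^2$ using the same pointwise bound on $\varphi\varphi''$. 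You list this estimate-based route yourself as an ``alternative,'' so in substance you recover the paper's argument as well. Your framing is arguably a little cleaner because it makes the reason for the cancellation explicit and avoids re-deriving the norm equivalence for the product. One small slip: after correctly noting that $\psi$ is $C^1$ near $0$ with $\psi'(0)=0$, you write in the parenthetical that $\psi$ has a corner ``at $0$ and $\pm T$''; there are corners only at $\pm T$, where the one-sided limits $\beta(2\beta-1)T^{2\beta-2}$ and $\beta^2T^{2\beta-2}$ of $\psi'$ differ. This does not affect the conclusion, since boundedness of $\psi'$ off finitely many points together with continuity of $\psi$ already yields the global Lipschitz bound.
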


\begin{Lemma}\label{lemma u in L beta1}
	 Under the assumptions of Theorem~\ref{th: regularity}, let $\beta_1$ is such that $ 2\beta_1-1=2^* $.
	 Then, $ u\in L^{2^*\beta_1}(\Omega)$.
\end{Lemma}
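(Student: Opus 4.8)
The plan is to run a single step of Moser iteration based on the truncated auxiliary function $\varphi=\varphi_{\beta_1,T}$ of Lemma~\ref{lemma  auxiliary function} (taken with $\beta=\beta_1$ and a parameter $T>0$ which will be sent to $\infty$ at the end). Since $2^*$ is the \emph{critical} Sobolev exponent, the extra integrability cannot be squeezed out by plain power counting — in the borderline case $q=2^*$ this would only return $u\in L^{2^*}$ — and has to be obtained by a Brezis--Kato-type absorption that exploits the smallness of $\int_{\{|u|>M\}}|u|^{2^*}\,dx$ for $M$ large. Throughout I assume $n>2$; if $n\le 2$ then $2^*=\infty$ and one already has $u\in L^p(\Omega)$ for every finite $p$ by Lemma~\ref{lemma: embedding continuous}.

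First I would take $\psi:=\varphi(u)\varphi'(u)$; by Lemma~\ref{lemma  auxiliary function}\,(b) it lies in $X_0^1$, hence is an admissible test function in~\eqref{weaksolution} (and the right-hand side there is finite, because~\eqref{eq: H 1} with $q\le 2^*$ and $u\in L^{2^*}(\Omega)$ give $g(\cdot,u)\in L^{(2^*)'}(\Omega)$, while $\psi\in X_0^1\hookrightarrow L^{2^*}(\Omega)$). Plugging $\psi$ into~\eqref{weaksolution} and using the convexity of $\varphi$ (so $\varphi\ge 0$ and $\varphi''\ge 0$), the local part of the bilinear form is bounded below by $\int_\Omega|\nabla u|^2(\varphi'(u))^2\,dx=\|\nabla\varphi(u)\|_{L^2(\Omega)}^2$, and the nonlocal part by $[\varphi(u)]_s^2$ via the elementary pointwise inequality
\[(a-b)\bigl(\varphi(a)\varphi'(a)-\varphi(b)\varphi'(b)\bigr)\ \ge\ \bigl(\varphi(a)-\varphi(b)\bigr)^2\qquad(a,b\in\mathbb{R}),\]
which for $a>b$ comes from $\varphi(a)\varphi'(a)-\varphi(b)\varphi'(b)=\int_b^a\bigl((\varphi'(t))^2+\varphi(t)\varphi''(t)\bigr)\,dt\ge\int_b^a(\varphi'(t))^2\,dt$ and the Cauchy--Schwarz inequality applied to $\bigl(\varphi(a)-\varphi(b)\bigr)^2=\bigl(\int_b^a\varphi'(t)\,dt\bigr)^2$. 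Combined with the Sobolev embedding $X_0^1\hookrightarrow L^{2^*}(\Omega)$ of Lemma~\ref{lemma: embedding continuous}, this gives, for some $C=C(n,\Omega)$,
\[\|\varphi(u)\|_{L^{2^*}(\Omega)}^2\ \le\ C\,\langle u,\psi\rangle_{X_0^1}\ =\ C\!\int_\Omega g(x,u)\,\varphi(u)\varphi'(u)\,dx\ \le\ Cc\!\int_\Omega\bigl(1+|u|^{q-1}\bigr)\,\varphi(u)\,|\varphi'(u)|\,dx.\]

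The heart of the matter is to bound this last integral. Besides the routine estimates $\varphi(t)\le|t|^{\beta_1}$ and $|\varphi'(t)|\le\beta_1|t|^{\beta_1-1}$, I will use the inequality $|t|\,|\varphi'(t)|\le\beta_1\,\varphi(t)$, which holds with equality for $|t|\le T$ and is a one-line computation for $|t|>T$ using $\beta_1>1$. Fix $M\ge 1$ and split $\Omega=\{|u|\le M\}\cup\{|u|>M\}$. On $\{|u|\le M\}$, using $2\beta_1-1=2^*$, the integrand is at most $\beta_1(1+M^{q-1})|u|^{2^*}$, so its integral is at most $\beta_1(1+M^{q-1})\|u\|_{L^{2^*}(\Omega)}^{2^*}$, finite and independent of $T$. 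On $\{|u|>M\}$, since $|u|\ge 1$ and $q\le 2^*$ one has $1+|u|^{q-1}\le 2|u|^{2^*-1}$, hence, by the inequality above,
\[\bigl(1+|u|^{q-1}\bigr)\varphi(u)\,|\varphi'(u)|\ \le\ 2|u|^{2^*-2}\bigl(|u|\,|\varphi'(u)|\bigr)\varphi(u)\ \le\ 2\beta_1\,|u|^{2^*-2}\varphi(u)^2,\]
and H\"older's inequality with the conjugate exponents $\tfrac n2$ and $\tfrac{n}{n-2}$, together with the numerology $(2^*-2)\tfrac n2=2^*=2\cdot\tfrac{n}{n-2}$, yields
\[\int_{\{|u|>M\}}|u|^{2^*-2}\varphi(u)^2\,dx\ \le\ \Bigl(\int_{\{|u|>M\}}|u|^{2^*}\,dx\Bigr)^{2/n}\,\|\varphi(u)\|_{L^{2^*}(\Omega)}^2.\]
Since $u\in L^{2^*}(\Omega)$, I would then fix $M=M_0$ (depending only on $u$, $n$, $\Omega$, $c$) so large that the coefficient of $\|\varphi(u)\|_{L^{2^*}(\Omega)}^2$ coming from the last two displays is $\le\tfrac12$, and absorb that term on the left — this is legitimate because for each fixed $T$ the quantity $\|\varphi_{\beta_1,T}(u)\|_{L^{2^*}(\Omega)}$ is finite, $\varphi_{\beta_1,T}(u)$ growing at most linearly in $u\in L^{2^*}(\Omega)$. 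This leaves $\|\varphi_{\beta_1,T}(u)\|_{L^{2^*}(\Omega)}^2\le K$, with $K=2Cc\,\beta_1(1+M_0^{q-1})\|u\|_{L^{2^*}(\Omega)}^{2^*}$ independent of $T$. Finally, sending $T\to\infty$, $\varphi_{\beta_1,T}(u(x))\to|u(x)|^{\beta_1}$ pointwise, so Fatou's lemma gives $\int_\Omega|u|^{2^*\beta_1}\,dx\le K^{2^*/2}<\infty$, that is $u\in L^{2^*\beta_1}(\Omega)$ — with a bound controlled by $\|u\|_{L^{2^*}(\Omega)}$, hence by the energy of $u$.

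The step I expect to be the main obstacle is precisely this absorption: in the critical situation $q=2^*$ one cannot dominate $\bigl(1+|u|^{q-1}\bigr)\varphi(u)|\varphi'(u)|$ by an integrable power of $|u|$, and what makes the argument close is the pairing of the inequality $|t||\varphi'(t)|\le\beta_1\varphi(t)$ (which manufactures the factor $\varphi(u)^2$) with the exact identity $(2^*-2)\tfrac n2=2^*$ (which makes H\"older produce exactly $\bigl(\int_{\{|u|>M\}}|u|^{2^*}\bigr)^{2/n}$, a quantity that tends to $0$ as $M\to\infty$ by absolute continuity of the integral). The remaining ingredients — the coercivity of the bilinear form, the convexity inequality for $\varphi\varphi'$, and the limit in $T$ — are standard.
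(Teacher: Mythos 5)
Your proposal is correct and follows essentially the same path as the paper: test the weak formulation with $\varphi(u)\varphi'(u)$, use the convexity of $\varphi$ to make the local term coercive and the nonlocal term nonnegative, pass to $\|\varphi(u)\|_{L^{2^*}(\Omega)}$ by Sobolev, split the resulting integral at a large threshold, use H\"older on the high part so that the small tail $\bigl(\int_{\{|u|>M\}}|u|^{2^*}\bigr)^{2/n}$ permits absorption into the left-hand side, and finally let $T\to\infty$. Your alternative derivation of the pointwise inequality for $\varphi\varphi'$ (integrating $(\varphi')^2+\varphi\varphi''$ and Cauchy--Schwarz) is a mild variant of the paper's two-sided convexity argument, and your direct bound $\beta_1(1+M^{q-1})\|u\|_{L^{2^*}}^{2^*}$ on the low part is slightly cleaner than the paper's intermediate $R^{2^*-1}\int\varphi(u)^2/|u|$, but neither change alters the substance of the argument.
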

\begin{proof}
	Let  $\varphi$ be given in Lemma~\ref{lemma  auxiliary function}. We first claim that
	\begin{equation}\label{dvsdv}
		\left(\varphi(u(x))\varphi'(u(x))-\varphi(u(y))\varphi'(u(y))\right)\left(u(x)-u(y)\right)
		\geqslant|\varphi(u(x))-\varphi(u(y))|^2
	\end{equation}
	for  every $u\in X_0^1$ and $x,y\in\mathbb{R}^n$.
	
	Indeed,	since  $\varphi$ is convex, we have that
	\begin{equation}\label{4.2}
		\varphi(t_1)-\varphi(t_2)\leqslant \varphi'(t_1)\left(t_1-t_2\right) \qquad \forall  t_1,t_2\in\mathbb{R}.
	\end{equation}
By combining this with the  non-negativity of $\varphi$, we derive that, for  every $u\in X_0^1$ and $x,y\in\mathbb{R}^n$, 
	\begin{equation}\label{1}
		\varphi(u(x))\varphi'(u(x))\left(u(x)-u(y)\right)\geqslant \varphi(u(x))\left(\varphi(u(x))-\varphi(u(y))\right)
	\end{equation}
and	 	\begin{equation}\label{2}
	\varphi(u(y))\varphi'(u(y))\left(u(y)-u(x)\right)\geqslant \varphi(u(y))\left(\varphi(u(y))-\varphi(u(x))\right).
	\end{equation}
   Then, by adding~\eqref{1} and~\eqref{2} together, we obtain the claim in~\eqref{dvsdv}.
   
   Now we plug the test function $\varphi(u)\varphi'(u)$ into the
   equation~\eqref{Maineq}, one has that
   \begin{equation}\label{integration of the main equation}
   	-\int_{\mathbb{R}^{n}}(\varphi(u)\varphi'(u))\Delta u\,dx+\int_{\mathbb{R}^{n}}\varphi(u)\,\varphi'(u)\,(-\Delta)^su\, dx=\int_{\mathbb{R}^{n}}\varphi(u)\varphi'(u)g(x,u)\, dx.
   \end{equation}
   The first term on the left side of \eqref{integration of the main equation} can be rewritten as
   \begin{equation}\label{rewriting of the first term}
   	\begin{split}
   		&-\int_{\Omega} (\varphi(u)\varphi'(u))\Delta u\, dx= \int_{\Omega}\nabla u\cdot \nabla(\varphi(u)\varphi'(u))\, dx\\
   		=&\int_{\Omega}|\nabla u|^2|\varphi'(u)|^2\, dx+\int_{\Omega}|\nabla u|^2\varphi(u)\,\varphi''(u)\,  dx.
   	\end{split}
   \end{equation}
   Owing to the convexity and non-negativity of~$\varphi$, we see that  
   \begin{equation}\label{vdssd}
   	\int_{\Omega}|\nabla u|^2\varphi(u)\,\varphi''(u)\,  dx\geqslant 0.
   \end{equation} 
   As for the second term on the left side of \eqref{integration of the main equation}, utilizing~\eqref{dvsdv}, one has that  
   \begin{equation}
   	\begin{split}\label{3.2BIS}
   		&\int_{\mathbb{R}^{n}}\varphi(u)\,\varphi'(u)\,(-\Delta)^su\, dx\\
   		&\quad =\frac{1}{2}\int\!\!\!\int\nolimits_{\mathbb{R}^{2n}}\frac{\left(\varphi(u(x))\varphi'(u(x))-\varphi(u(y))\varphi'(u(y))\right)\left(u(x)-u(y)\right)}{|x-y|^{n+2s}}\, dxdy\geqslant 0.
   	\end{split}
   \end{equation}
   From this, by combining~\eqref{integration of the main equation},~\eqref{rewriting of the first term} and \eqref{vdssd}, one derives that
   \begin{align}
   	\int_{\Omega}|\nabla u|^2|\varphi'(u)|^2\, dx
   	&\leqslant\int_{\mathbb{R}^n}\varphi(u)\,\varphi'(u)\,g(x,u)\, dx. \label{4.5}
\end{align}
Hence, recalling~\eqref{eq: H 1} and utilizing the Sobolev inequality, we obtain
\begin{align*}
\|\varphi(u)\|^2_{L^{2^*}(\Omega)}&\leqslant C \|\nabla (\varphi(u))\|_{L^2(\Omega)}^2= C\int_{\Omega}|\varphi'(u) \nabla u|^2\,dx\\
& \leqslant C\int_{\mathbb{R}^n}\varphi(u)\varphi'(u)g(x,u)\, dx
\leqslant C \int_{\mathbb{R}^n}\varphi(u)|\varphi'(u)|(1+|u|^{2^*-1})\, dx\\
& =C\bigg(\int_{\mathbb{R}^n}\varphi(u)|\varphi'(u)|\,dx +\int_{\mathbb{R}^n}\varphi(u)|\varphi'(u)||u|^{2^*-1}\, dx \bigg),
\end{align*}
where the constant $ C $ depends only on $ n,\Omega $ and $ c $.
	
	Employing the estimates \begin{equation}\label{bjdsvbjds}
		\varphi(u)\leqslant |u|^\beta ,\qquad |\varphi'(u)|\leqslant \beta |u|^{\beta-1} \qquad{\mbox{and}}\qquad |u\varphi'(u)|\leqslant \beta \varphi(u),
	\end{equation} we see that
	\begin{equation}\label{main estimate}
	\bigg(\int_{\Omega}(\varphi(u))^{2^*}\,dx\bigg)^{2/2^*}\leqslant C\beta\bigg(\int_{\Omega}|u|^{2\beta-1}\,dx +\int_{\Omega}(\varphi(u))^2 |u|^{2^*-2}\, dx \bigg).
	\end{equation}
	Notice that $ C $ is a positive constant that does not depend on $ \beta $ and  that the last integral on the right-hand-side of the inequality \eqref{main estimate} is well defined for every~$ T >0 $. 
	
	Furthermore, for any given $ R>0 $, we apply the H\"older inequality  and obtain that
	\begin{equation}
	\begin{split}
	&\int_{\Omega}(\varphi(u))^2 |u|^{2^*-2}\, dx=\int_{\left\{|u|\leqslant R\right\}}(\varphi(u))^2 |u|^{2^*-2}\, dx +\int_{\left\{|u|>R\right\}} (\varphi(u))^2 |u|^{2^*-2}\, dx \\
	\leqslant& \int_{\left\{|u|\leqslant R\right\}}\frac{(\varphi(u))^2}{|u|} R^{2^*-1}\, dx+\bigg(\int_{\Omega}(\varphi(u))^{2^*}\, dx\bigg)^{2/2^*}\bigg(\int_{\left\{|u|>R\right\}}|u|^{2^*}\, dx\bigg)^{\frac{2^*-2}{2^*}}.
	\end{split}\label{estimate of phi u}
	\end{equation}
	By the Monotone Convergence Theorem, one can choose $R>0$ large enough such that
	\begin{equation}\label{eq:C beta}
	\bigg(\int_{\left\{|u|>R\right\}}|u|^{2^*}\,dx\bigg)^{\frac{2^*-2}{2^*}}\leqslant \frac{1}{2C\beta_1},
	\end{equation}
	where $ C $ is the constant in \eqref{main estimate}. 
	
	Therefore,  choosing $\beta=\beta_1$ in~\eqref{main estimate}, one thus can reabsorb the last term in \eqref{estimate of phi u} into the left hand side of \eqref{main estimate} to get
	\begin{equation*}
	\bigg(\int_{\Omega}(\varphi(u))^{2^*}\,dx\bigg)^{2/2^*}\leqslant 2C\beta_1\bigg(\int_{\Omega}|u|^{2^*}\, dx+ R^{2^*-1}\int_{\Omega}\frac{(\varphi(u))^2}{|u|}\, dx \bigg).
	\end{equation*}
	Now, using that $\varphi(u)\leqslant |u|^{\beta_1}$, we conclude that the terms in the right hand side of the above inequality is bounded independently of $T$.
	
	Sending $ T\rightarrow +\infty $, we obtain that
	\begin{equation}\label{eq:u in L beta1}
	\bigg(\int_{\Omega}|u|^{2^*\beta_1}\,dx\bigg)^{2/2^*}\leqslant 2C\beta_1\bigg(\int_{\Omega}|u|^{2^*}\, dx+ R^{2^*-1}\int_{\Omega}|u|^{2^*}\, dx \bigg)<+\infty,
	\end{equation}as desired.
\end{proof}

Now we complete the proof of the $L^\infty$-regularity result:

 \begin{proof}[Proof of Theorem~\ref{th: regularity}]
 	For $ m\geqslant 1, $ we define $ \beta_{m+1} $ such that
 	\begin{equation*}
 	2\beta_{m+1}+2^*-2=2^*\beta_{m}.
 	\end{equation*}
 	As in \cite{SVWZ22}, we infer that
 	\begin{equation*}
 	\bigg(1+\int_{\Omega}|u|^{2^*\beta_{m+1}}\,dx\bigg)^{\frac{1}{2^*(\beta_{m+1}-1)}}\leqslant (C\beta_{m+1})^{\frac{1}{2(\beta_{m+1}-1)}}\bigg(1+\int_{\Omega} |u|^{2^*\beta_{m}}\, dx\bigg)^{\frac{1}{2^*(\beta_{m}-1)}}.
 	\end{equation*}
 	We now define $ C_{m+1}:=C\beta_{m+1} $ and
 	\begin{equation*}
 	A_m:=\bigg(1+\int_{\Omega} |u|^{2^*\beta_{m}}\, dx\bigg)^{\frac{1}{2^*(\beta_{m}-1)}}.
 	\end{equation*}
 	In particular, note that $A_1= \bigg(1+\int_{\Omega} |u|^{2^*\beta_{1}}\, dx\bigg)^{\frac{1}{2^*(\beta_{1}-1)}}$ is bounded by \eqref{eq:u in L beta1}.
 	
 It is not difficult to verify that there exists a constant $ C_0>0 $ independent of $m$, such that
 	\begin{equation}\label{DRI}
 	A_{m+1}\leqslant \prod_{k=2}^{m+1}C_k^{\frac{1}{2(\beta_{k}-1)}}A_1\leqslant C_0 A_1.
 	\end{equation}
 	We stress that once~\eqref{DRI} is established,
 	then, by the H\"older inequality, we conclude that $ u\in L^p(\Omega)$, for every~$ p\in[1,+\infty)$. Furthermore,
 	a limit argument implies that
 	\begin{equation*}
 	\|u\|_{L^\infty(\Omega)}\leqslant C_0 {A_1}<+\infty,
 	\end{equation*}
 	 where $ C_0 $ depends only on $ n,q,\Omega,c $, which finishes the proof of Theorem~\ref{th: regularity}.
 \end{proof}

Moreover, based on the proof of Theorem~\ref{th: regularity}, we can deduce a bound of the $L^\infty$-norm of a weak solution $u$ by a constant which is independent of $u$ itself when the nonlinearity $g$ is strictly sublinear growth in $u$, as described in the following result.
	
	\begin{Proposition}\label{th: regularity for linearity}
		Let $ u$  be   a weak solution of \eqref{Maineq}.
		Assume that there exist $ \Lambda>0 $ and $q\in [1,2)$ such that
		\begin{equation}\label{eq: H 11}
			|g(x,t)|\leqslant \Lambda(1+|t|^{q-1}) \quad \text{ for a.e. } x\in  \Omega ,  t\in \mathbb{R}.
		\end{equation}Then, $ u\in L^{\infty}(\Omega)$. 
		
		More precisely, there exists a constant $C_0= C_0(\Lambda,n,s, q, \Omega)>0$ independent of $u$, such that
		\begin{equation}\label{vrg}
			\|u\|_{L^\infty(\Omega)}\leqslant C_0.
		\end{equation}
\end{Proposition}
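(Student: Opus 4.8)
The plan is to revisit the Moser iteration carried out in the proof of Theorem~\ref{th: regularity} and track the explicit dependence on the data, noting that the strictly sublinear growth rate $q<2$ produces a \emph{closed} estimate for $\|u\|_{L^\infty(\Omega)}$ that does not involve $u$ on the right-hand side. First I would observe that the hypothesis \eqref{eq: H 11} with $q\in[1,2)$ implies in particular the subcritical bound $|g(x,t)|\le \Lambda(1+|t|^{2^*-1})$, so Theorem~\ref{th: regularity} already gives $u\in L^\infty(\Omega)$; the only new content here is the \emph{uniformity} of the bound. To get that, I would redo the first energy inequality: testing \eqref{weaksolution} with $u$ itself, using the Sobolev embedding (Lemma~\ref{lemma: embedding continuous}), and estimating $\int_\Omega g(x,u)u\,dx\le \Lambda\int_\Omega(|u|+|u|^{q})\,dx\le \Lambda(|\Omega|^{1/2}\|u\|_{L^2}+|\Omega|^{(2-q)/2}\|u\|_{L^2}^{q})$ via H\"older; since $q<2$, Young's inequality lets one absorb these terms into $\|u\|_{X_0^1}^2$ and conclude $\|u\|_{X_0^1}\le C(\Lambda,n,s,q,\Omega)$, hence $\|u\|_{L^{2^*}(\Omega)}\le C$ with $C$ independent of $u$.

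Next I would re-examine Lemma~\ref{lemma u in L beta1} and the iteration in the proof of Theorem~\ref{th: regularity}, replacing the a priori unknown quantity $\int_\Omega|u|^{2^*}\,dx$ by the bound just obtained. Concretely, \eqref{eq:u in L beta1} gives $\bigl(\int_\Omega|u|^{2^*\beta_1}\bigr)^{2/2^*}\le 2C\beta_1(1+R^{2^*-1})\int_\Omega|u|^{2^*}\,dx$, where $R$ is chosen so that $\bigl(\int_{\{|u|>R\}}|u|^{2^*}\bigr)^{(2^*-2)/2^*}\le\frac{1}{2C\beta_1}$; since $\|u\|_{L^{2^*}(\Omega)}$ is now bounded uniformly, the tail smallness can be achieved with $R$ depending only on the data, so $A_1=\bigl(1+\int_\Omega|u|^{2^*\beta_1}\bigr)^{1/(2^*(\beta_1-1))}\le C_0(\Lambda,n,s,q,\Omega)$ independently of $u$. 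Feeding this into \eqref{DRI} and letting $m\to\infty$ exactly as in the proof of Theorem~\ref{th: regularity} yields $\|u\|_{L^\infty(\Omega)}\le C_0 A_1\le C_0$, which is \eqref{vrg}.

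The main obstacle, and the only place where $q<2$ is genuinely used, is the uniform energy bound in the first paragraph: when $q=2^*$ (or even $q=2$) the term $\|u\|_{L^2}^{q}$ cannot be absorbed and one only recovers the conditional estimate of Theorem~\ref{th: regularity}. Everything downstream is a bookkeeping exercise: one must simply verify that the constants $C$, $R$, and $\beta_m$ appearing in Lemma~\ref{lemma u in L beta1} and in the iteration depend only on $n,s,q,\Omega,\Lambda$ and not on $u$, which is immediate once the uniform $L^{2^*}$ bound replaces the solution-dependent energy. One subtlety worth flagging is that the Monotone Convergence step fixing $R$ must be made uniform over all solutions; this follows because a uniform bound on $\|u\|_{L^{2^*}(\Omega)}$ gives, via Chebyshev/absolute continuity arguments applied to the uniformly integrable family $\{|u|^{2^*}\}$, a single $R$ that works for all of them.
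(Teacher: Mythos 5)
Your first paragraph is correct and coincides with the paper's own closing step: the paper also tests the equation with $u$ and uses $q<2$ to absorb, obtaining the uniform bound \eqref{vrg1} on $\|u\|_{L^{2^*}(\Omega)}$. The gap is in the second half, precisely at the point you flag as a ``subtlety''. In Lemma~\ref{lemma u in L beta1} the truncation level $R$ is chosen so that $\bigl(\int_{\{|u|>R\}}|u|^{2^*}\,dx\bigr)^{(2^*-2)/2^*}\leqslant \frac{1}{2C\beta_1}$, and this choice genuinely depends on $u$: a uniform bound on $\|u\|_{L^{2^*}(\Omega)}$ over a family of functions does \emph{not} make the family $\{|u|^{2^*}\}$ uniformly integrable, so no single $R$ need work for all solutions. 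Boundedness in $L^1$ is strictly weaker than equi-integrability (think of $L^{2^*}$-normalized bumps concentrating at a point), and one cannot appeal to compactness of the solution set in $L^{2^*}$ either, since the embedding $H^1_0(\Omega)\hookrightarrow L^{2^*}(\Omega)$ is not compact. Chebyshev only controls $|\{|u|>R\}|$, which bounds the tail integral only if one already has a uniform bound in some $L^p$ with $p>2^*$ --- and producing such a bound is exactly what the (now non-uniform) first iteration step was supposed to do, so the argument is circular. As written, $A_1$ in \eqref{DRI} is therefore not bounded independently of $u$, and \eqref{vrg} does not follow.

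The repair --- and the route the paper actually takes --- is to use the sublinearity $q<2$ \emph{inside} the iteration rather than only in the initial energy estimate. Since $q-1<1$, one has $|u|^{q-1}\leqslant 1+|u|$, so the right-hand side of each Moser step becomes $C\beta\bigl(\int_\Omega|u|^{2\beta-1}\,dx+\int_\Omega(\varphi(u))^2\,dx\bigr)$ with no critical factor $|u|^{2^*-2}$ at all; hence there is no tail term to reabsorb and no $R$ to choose. Every step then closes unconditionally with constants depending only on $n,s,q,\Omega,\Lambda$, and the iteration $2\beta_{m+1}=2^*_s\beta_m$ (the paper discards the non-negative gradient term and runs the scheme on the fractional Sobolev exponent, though this is a matter of taste) combined with your uniform $L^{2^*}$ bound yields \eqref{vrg}. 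In short: your starting point is right and is where $q<2$ first enters, but you must let $q<2$ simplify every iteration step instead of feeding the uniform energy bound back into the critical-exponent iteration of Theorem~\ref{th: regularity}.
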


We remark that the uniform estimate of Proposition~\ref{th: regularity for linearity}
does not hold when~$q\ge2$: as a counterexample, one can take the linear case in which $
g(x,t):=\lambda t$, with~$ \lambda$ an eigenvalue of the mixed operator~$\mathcal{L}$.
In this situation, we have that~$ |g(x,t)|\le \lambda(1+|t|)\le 2\lambda(1+|t|^{q-1})$ as long as~$q\ge2$; but, in this setting, if~$ u$ is a solution, then so is~$ Mu $ for every~$ M>0$, hence the $L^\infty$-norm of the solution cannot be bounded by a quantity independent on~$u$. This example is interesting because it shows that the range of exponents~$q$ in
Proposition~\ref{th: regularity for linearity} is optimal.

{\begin{proof}[Proof of Proposition~\ref{th: regularity for linearity}]
		We recall that 
		\begin{equation}\label{bdsgds}
			\|u\|^2_{L^{2^*_s}(\Omega)}\leqslant S [u]_s^2
		\end{equation}
		where  $2^*_s$ is the critical Sobolev embedding exponent given in~\eqref{fractional} and $S$ is a constant depending only on $n $ and $s$.
		Let  $\varphi$ be given in Lemma~\ref{lemma  auxiliary function}. Recalling ~\eqref{dvsdv} and~\eqref{3.2BIS}, employing~\eqref{bdsgds},  we see that 
	 \begin{equation}
	 	\begin{split}\label{vdsvds}
		&\bigg(\int_{\Omega}(\varphi(u))^{2_s^*}\,dx\bigg)^{2/2_s^*}\\
		\leqslant&
		S\bigg(\int\!\!\!\int\nolimits_{\mathbb{R}^{2n}}\frac{|\varphi(u(x))-\varphi(u(y))|^2}{|x-y|^{n+2s}}\, dxdy\bigg)\\
	\leqslant	& S\left(\int\!\!\!\int\nolimits_{\mathbb{R}^{2n}}\frac{\left(\varphi(u(x))\varphi'(u(x))-\varphi(u(y))\varphi'(u(y))\right)\left(u(x)-u(y)\right)}{|x-y|^{n+2s}}\, dxdy\right)\\
		=&2S \int_{\mathbb{R}^{n}}\varphi(u)\,\varphi'(u)\,(-\Delta)^su\, dx. 
	\end{split}
	\end{equation}
Recalling~\eqref{integration of the main equation}, we observe that
\begin{equation}\label{bfsb}
	\int_{\mathbb{R}^{n}}\varphi(u)\,\varphi'(u)\,(-\Delta)^su\, dx=\int_{\mathbb{R}^{n}}\varphi(u)\varphi'(u)g(x,u)\, dx+\int_{\mathbb{R}^{n}}(\varphi(u)\varphi'(u))\Delta u\,dx.
\end{equation}
From~\eqref{rewriting of the first term} and~\eqref{vdssd}, it follows that the second term on the right side of \eqref{bfsb} is non-positive, 
 one thus has that 
\begin{equation}
	\begin{split}
		\bigg(\int_{\Omega}(\varphi(u))^{2_s^*}\,dx\bigg)^{2/2_s^*}&\leqslant 2S\int_{\mathbb{R}^n}\varphi(u)\,\varphi'(u)\,g(x,u)\, dx.
	\end{split}
\end{equation}
Moreover, by combining~\eqref{bjdsvbjds} with~\eqref{eq: H 11}, one derives that
\begin{equation}\label{main estimate1}
	\begin{split}
		\bigg(\int_{\Omega}(\varphi(u))^{2_s^*}\,dx\bigg)^{2/2_s^*}
		&\leqslant 2S\Lambda\int_{\mathbb{R}^n}\varphi(u)\,\varphi'(u)\left(1+|u|\right)\, dx\\
		&\leqslant  C\beta\left(\int_{\Omega}|u|^{2\beta-1}\,dx +\int_{\Omega}(\varphi(u))^2 \, dx\right),
	\end{split}
\end{equation}
 where  the constant $C$ only depends on $n,s,\Omega,\Lambda$.
 
We pick $\beta_1=\frac{2^*}{2_s^*}>1$ provided that $n>2$ ( $\beta_1$ is any real number in $[2,+\infty)$ if $n=1,2$). Thus, one has that $u\in L^{2_s^*\beta_1}(\Omega).$

	Next, we will find an increasing unbounded sequence $ \beta_m $  such that
\[ u\in L^{2_s^*\beta_{m}}(\Omega), \quad \forall m>1. \]
To this end, let us suppose that~$ \beta>\beta_1 $. Thus, recalling that $\varphi$ depends on $T>0$, as in~\eqref{vsdvdsc}, and using that $ \varphi(u)\leqslant |u|^\beta $ in the right hand side of  \eqref{main estimate1}, letting $ T\rightarrow \infty $ we get
\begin{equation}
	\bigg(\int_{\Omega}|u|^{2^*_s\beta}\,dx\bigg)^{2/2^*_s}\leqslant C\beta \bigg(\int_{\Omega} |u|^{2\beta-1}\,dx +\int_{\Omega} |u|^{2\beta}\, dx\bigg). \label{rough estimate}
\end{equation}
 Using Young's inequality, we also observe that
\begin{equation}\label{estimate of 2beta-1-norm}
		\int_{\Omega}|u|^{2\beta-1}\,dx
		\leqslant \int_{\Omega}|u|^{2\beta}\,dx+|\Omega|.
\end{equation}
Hence, by combining \eqref{rough estimate} with \eqref{estimate of 2beta-1-norm}, we conclude that
\begin{equation}
	\bigg(\int_{\Omega}|u|^{2^*_s\beta}\,dx\bigg)^{2/2^*_s}\leqslant  C\beta \left(|\Omega| +2\int_{\Omega} |u|^{2\beta}\, dx\right).
\end{equation} 
As a consequence,  
exploiting the formula $ (a+b)^2\leqslant 2(a^2+b^2) $, one derives that
\begin{equation}
	\begin{split}
	\left(1+\int_{\Omega} |u|^{2^*_s\beta}\, dx\right)^2&\leqslant 2+2 C^{2_s^*}\beta^{2_s^*} \left(|\Omega| +2\int_{\Omega} |u|^{2\beta}\, dx\right)^{2_s^*}\\
	&\leqslant 2+2\left[2C\beta(|\Omega|+1)\left(1+\int_{\Omega} |u|^{2\beta}\, dx \right)\right]^{2^*_s}.
\end{split}
\end{equation}
Therefore,
\begin{equation}\label{estimate of iteration}
	\bigg(1+\int_{\Omega}|u|^{2^*_s\beta}\,dx\bigg)^{\frac{1}{2^*_s\beta}}\leqslant (C\beta)^{\frac{1}{2\beta}}\bigg(1+\int_{\Omega} |u|^{2\beta}\, dx\bigg)^{\frac{1}{2\beta}}
\end{equation}
where $ C=C(n,s,\Omega,\Lambda)$.

For $ m\geqslant 1, $ we define $ \beta_{m+1} $ such that
\begin{equation*}
	2\beta_{m+1}=2^*_s\beta_{m}.
\end{equation*}
Thus, \eqref{estimate of iteration} becomes
\begin{equation*}
	\bigg(1+\int_{\Omega}|u|^{2^*_s\beta_{m+1}}\,dx\bigg)^{\frac{1}{2^*_s\beta_{m+1}}}\leqslant (C\beta_{m+1})^{\frac{1}{2\beta_{m+1}}}\bigg(1+\int_{\Omega} |u|^{2^*_s\beta_{m}}\, dx\bigg)^{\frac{1}{2^*_s\beta_{m}}}.
\end{equation*}
We now define $ C_{m+1}:=C\beta_{m+1} $ and
\begin{equation}\label{vnsdhvs}
	B_m:=\left(1+\int_{\Omega} |u|^{2^*_s\beta_{m}}\, dx\right)^{\frac{1}{2^*_s\beta_{m}}}.
\end{equation}
In particular, note that $B_1= \bigg(1+\int_{\Omega} |u|^{2^*_s\beta_{1}}\, dx\bigg)^{\frac{1}{2^*_s\beta_{1}}}=\left(1+\|u\|^{2^*}_{L^{2^*}(\Omega)}\right)^{1/2^*}$ is bounded.
Additionally, it is not difficult to verify that there exists a constant $ \tilde{C}>0 $ independent of $m$, such that
\begin{equation}\label{DRI1}
	B_{m+1}\leqslant C_{m+1}^{\frac{1}{2\beta_{m+1}}}B_m\leqslant  \prod_{k=2}^{m+1}C_k^{\frac{1}{2\beta_{k}}}B_1\leqslant \tilde{C} B_1.
\end{equation}
From this and~\eqref{vnsdhvs}, one has that
\begin{equation}\label{vsdvsdfsd}
	\|u\|_{L^{2^*_s\beta_{m+1}}(\Omega)}=\left(\int_{\Omega} |u|^{2^*_s\beta_{m+1}}\, dx\right)^{\frac{1}{2^*_s\beta_{m+1}}}\leqslant \tilde{C}\left(1+\|u\|^{2^*}_{L^{2^*}(\Omega)}\right)^{1/2^*}.
\end{equation}
Thus, by the H\"older inequality, we conclude that $ u\in L^p(\Omega)$ for every~$ p\in[1,+\infty)$. Furthermore, 
a limit argument implies that
\begin{equation}\label{dsgsdf}
	\|u\|_{L^\infty(\Omega)}\leqslant \tilde{C}\left(1+\|u\|^{2^*}_{L^{2^*}(\Omega)}\right)^{1/2^*},
\end{equation}
where $ \tilde{C} $ depends only on $ n,s,\Omega,\Lambda$.

We now claim that 
\begin{equation}\label{vrg1}
	\|u\|_{L^{2^*}(\Omega)}\leqslant {C}(n,q,\Lambda,\Omega)
\end{equation}
for some constant $ {C}(n,q,\Lambda,\Omega)>0$.

By testing equation~\eqref{Maineq} against $u$, we see that 
\begin{equation}
	\begin{split}
			\|u\|^2_{L^{2^*}(\Omega)}&\leqslant C\|\nabla u\|^2_{L^{2}(\Omega)}\leqslant C\Lambda\int_{\Omega} \left(|u|+|u|^{q}\right)\,dx\\
			&\leqslant C\left(\|u\|_{L^{2^*}(\Omega)}+\|u\|^q_{L^{2^*}(\Omega)}\right),
	\end{split}
\end{equation}
where the positive constants $C$ are different and depend  on $n,\Omega,\Lambda,q$.
This implies the desired result~\eqref{vrg1}, thanks to the fact that  $q\in[1,2)$.
From~\eqref{dsgsdf} and~\eqref{vrg1}, we finish the proof of Proposition~\ref{th: regularity for linearity}.
\end{proof}}

\section{$ C^{1,\alpha}$-regularity }\label{sec: C^alpha-regularity}\setcounter{equation}{0}

In this section, we focus on the $ C^{1,\alpha}$-regularity up to the boundary  and on the interior $ C^{1,\alpha}$-regularity for operator $\mathcal{L}$.

\subsection{$ C^{1,\alpha}$-regularity up to the boundary }\label{sec: C^1,alpha-regularity}\setcounter{equation}{0}

The $C^{1,\alpha}$-regularity up to the boundary result for $\mathcal{L}$ is now an immediate corollary of  the following  $W^{2,p} $-regularity theorem.

\begin{Theorem}\label{theorem w 2p}
	Let $ \Omega $ be a $ C^{1,1} $ domain in $ \mathbb{R}^n $ and $ s\in (0,1)$. If $ f\in L^p(\Omega) $ 
	where  $p$ satisfies \begin{equation}
		\begin{cases}\label{eq definition of p}
			1<p<\infty ,\qquad &{\mbox{ if }}s\in(0,\frac{1}{2}] ,     \\
			n<p<\frac{n}{2s-1},& {\mbox{ if }}s\in(\frac{1}{2},1).
		\end{cases}
	\end{equation}
Then,
the problem
	\begin{equation}\label{eq:W{2,p} main equation}
	-\Delta u+(-\Delta)^su=f, \qquad \text{in } \Omega
	\end{equation}
	has a unique solution $ u\in W^{2,p}(\Omega)\cap W^{1,p}_0(\Omega)$.
	
	Furthermore,
	\begin{equation}\label{eq:W 2p estimate}
	\|u\|_{W^{2,p}(\Omega)}\leqslant C\bigg(\|u\|_{L^p(\Omega)}+\|f\|_{L^p(\Omega)}\bigg),
	\end{equation}
	where the positive constant $ C$ depends only on $ \Omega, n, s, p $.
\end{Theorem}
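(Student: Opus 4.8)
The plan is to treat the fractional Laplacian as a lower-order perturbation of the local operator $-\Delta$ and to run a fixed-point / continuity argument in the space $W^{2,p}(\Omega)\cap W^{1,p}_0(\Omega)$. First I would recall the classical Calder\'on--Zygmund theory for the Dirichlet problem for the Laplacian on a $C^{1,1}$ domain: for every $h\in L^p(\Omega)$ with $1<p<\infty$ there is a unique $w\in W^{2,p}(\Omega)\cap W^{1,p}_0(\Omega)$ solving $-\Delta w=h$ in $\Omega$, $w=0$ on $\partial\Omega$, with $\|w\|_{W^{2,p}(\Omega)}\leqslant C(\|w\|_{L^p(\Omega)}+\|h\|_{L^p(\Omega)})$, and in fact $\|w\|_{W^{2,p}(\Omega)}\leqslant C\|h\|_{L^p(\Omega)}$ once one absorbs the lower-order term via the solvability of the homogeneous problem. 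Denote this solution operator by $\mathcal{T}\colon L^p(\Omega)\to W^{2,p}(\Omega)\cap W^{1,p}_0(\Omega)$; it is bounded. Then \eqref{eq:W{2,p} main equation} is rewritten as the fixed-point equation $u=\mathcal{T}\bigl(f-(-\Delta)^s u\bigr)$, where $u$ is extended by $0$ outside $\Omega$.

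The crucial point is then a mapping estimate for $(-\Delta)^s$: one needs $u\mapsto (-\Delta)^s u\in L^p(\Omega)$ to be well-defined and, more importantly, to gain regularity, i.e. to be \emph{compact} or at least to have small norm relative to $\mathcal{T}$. Here is where the dichotomy in \eqref{eq definition of p} enters. If $u\in W^{2,p}(\Omega)\cap W^{1,p}_0(\Omega)$ with $p>1$, then (extending by zero) $u\in W^{2,p}(\mathbb R^n)$ when $p\leqslant \frac12$-range allows, but more robustly one uses that $u\in W^{\sigma,p}(\mathbb R^n)$ for suitable $\sigma$, and that $(-\Delta)^s\colon W^{\sigma,p}\to W^{\sigma-2s,p}$ is bounded. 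For $s\in(0,\tfrac12]$ one has $2-2s\geqslant 1$, so $(-\Delta)^s u\in W^{1,p}(\Omega)\hookrightarrow L^p(\Omega)$ with a gain of a full derivative over $L^p$, yielding compactness of $u\mapsto \mathcal T((-\Delta)^s u)$ on $W^{2,p}$ by Rellich--Kondrachov; no restriction on $p$ is needed. For $s\in(\tfrac12,1)$ the gain $2-2s$ is less than one, and to land back in $L^p$ after applying $\mathcal T$ one needs the Sobolev tail to be controlled — this forces $p>n$ (so that $W^{1,p}$-type quantities and the pointwise P.V.\ integral for $(-\Delta)^s u$ make sense with $u\in C^{1,\alpha}$ by Morrey, giving a classical fractional Laplacian in $L^\infty_{\rm loc}$) together with the upper bound $p<\frac{n}{2s-1}$, which is exactly the threshold ensuring $(-\Delta)^s$ of a $W^{2,p}$ function stays in $L^p$ after accounting for the $2s-1$ loss beyond the first derivative.

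With the compactness of the operator $K\colon u\mapsto \mathcal T((-\Delta)^s u)$ on $E:=W^{2,p}(\Omega)\cap W^{1,p}_0(\Omega)$ in hand, the equation becomes $(I+K)u=\mathcal T f$ in $E$, and existence and uniqueness follow from the Fredholm alternative provided the homogeneous equation $(I+K)u=0$, i.e. $-\Delta u+(-\Delta)^s u=0$ with exterior/boundary datum zero, has only the trivial solution. This last injectivity I would get by testing against $u$ (which is legitimate since $u\in X^1_0$ by the embeddings $W^{2,p}\cap W^{1,p}_0\hookrightarrow X^1_0$, using $p$ large enough or interpolation) to obtain $\|\nabla u\|_{L^2(\Omega)}^2+[u]_s^2=0$, hence $u\equiv0$; alternatively by the maximum principle quoted in the appendix. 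Finally, the a priori bound \eqref{eq:W 2p estimate} comes from the closed-graph / open-mapping theorem applied to $(I+K)^{-1}$, or directly: $\|u\|_{E}\leqslant \|\mathcal T f\|_E+\|Ku\|_E\leqslant C\|f\|_{L^p(\Omega)}+C\|(-\Delta)^s u\|_{L^p(\Omega)}$ and then interpolation of $\|(-\Delta)^s u\|_{L^p}$ between $\|u\|_E$ (with a small coefficient $\epsilon$) and $\|u\|_{L^p(\Omega)}$ (with a large coefficient $C_\epsilon$), absorbing the $\epsilon\|u\|_E$ term on the left.

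The main obstacle I anticipate is the sharp mapping property of $(-\Delta)^s$ on the Bessel/Sobolev scale over the \emph{domain} $\Omega$ (as opposed to all of $\mathbb R^n$), in particular justifying that the zero-extension of a $W^{2,p}(\Omega)\cap W^{1,p}_0(\Omega)$ function has enough global regularity for $(-\Delta)^s$ to act with the claimed gain, and pinning down exactly why the interval $n<p<\frac{n}{2s-1}$ is the correct one when $s\in(\tfrac12,1)$ — this is precisely the range in which the perturbation term does not destroy the Calder\'on--Zygmund estimate, and getting the endpoints right (and checking the interval is nonempty, which needs $2s-1<1$, always true, and $n\cdot 1<n$, fine) requires care with fractional Sobolev embeddings and the behavior of $u$ near $\partial\Omega$.
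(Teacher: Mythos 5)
Your proposal follows the same perturbative strategy as the paper: both treat $(-\Delta)^s$ as a lower--order term relative to $-\Delta$, and both reduce the matter to classical Calder\'on--Zygmund theory for the Dirichlet Laplacian together with an $L^p$ mapping estimate for $(-\Delta)^s$ acting on the zero extension of a $W^{2,p}(\Omega)\cap W^{1,p}_0(\Omega)$ function. The paper's own proof is only a three--step sketch deferring to~\cite{SVWZ22}: (i) the $L^p$ estimate on $(-\Delta)^s$, (ii) a fixed--point argument for the shifted problem $-\Delta u+\lambda u=f-(-\Delta)^su$ with $\lambda$ large, and (iii) the maximum principle of~\cite{BDVV22b} plus a bootstrap to return to $\lambda=0$ and obtain uniqueness. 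You instead package the perturbation as a compact operator and invoke the Fredholm alternative, with injectivity coming from an energy identity; this is a legitimate and arguably cleaner alternative, since it avoids the continuation in $\lambda$. Two points in your sketch need repair, though neither is fatal to the strategy. First, the zero extension of a function in $W^{2,p}(\Omega)\cap W^{1,p}_0(\Omega)$ is \emph{not} in $W^{2,p}(\mathbb{R}^n)$ (the normal derivative jumps across $\partial\Omega$), so your claim that $(-\Delta)^s u$ gains a full derivative over $L^p$ when $s\leqslant\tfrac12$ is not justified as stated; compactness only requires an arbitrarily small positive gain, so the Fredholm argument survives, but this boundary contribution is exactly where the restriction on $p$ in \eqref{eq definition of p} for $s>\tfrac12$ originates, and it is the content of the paper's Step~1 which neither you nor the paper actually proves here. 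Second, for $s\leqslant\tfrac12$ and $p$ close to $1$ a $W^{2,p}$ function need not belong to $H^1_0(\Omega)$, so testing the homogeneous equation against $u$ is not immediately legitimate; one must first bootstrap the integrability (precisely the paper's Step~3) or fall back on the maximum--principle alternative you mention. With those repairs your plan matches the paper's argument in substance, and, like the paper, it ultimately rests on the unproved $L^p$ estimate for $(-\Delta)^s$ from the earlier reference.
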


Theorem~\ref{theorem w 2p} can be obtained by  following an argument given in \cite{SVWZ22}.
For the reader's convenience we sketch this argument here, by pointing out the following steps:
\begin{itemize}
	\item[Step 1.] Obtain $ L^p $-estimates on the operator $ (-\Delta)^s$;
	\item[Step~2.] Apply the fixed point theorem  to deduce that the problem
	\begin{equation*}
	-\Delta u+\lambda u=f-(-\Delta)^su, \qquad \text{in } \Omega
	\end{equation*}
	has a unique solution $ u\in W^{2,p}(\Omega)\cap W^{1,p}_0(\Omega) $ for $ \lambda>0$ large enough;
	\item[Step~3.] Employ the maximum principle  in  \cite[Theorem 1.2]{BDVV22b} and the
	bootstrap method to conclude.
\end{itemize}

With this, we can now complete the proof of the $C^{1,\alpha}$-regularity theory by proceeding as follows.

\begin{proof}[Proof of Theorem~\ref{th:C^{1,alpha}}]
Let $ u\in X_0^1 $ be the weak solution of \eqref{Maineq}. Theorem~\ref{th: regularity} gives that the map~$x\mapsto g(x,u(x))$
belongs to~$ L^\infty(\Omega)$.

Moreover, due to Theorem~\ref{theorem w 2p}, we  deduce that  the linear problem
	\begin{equation}
	-\Delta v+(-\Delta)^sv=g(x,u(x)), \qquad \text{in } \Omega
\end{equation}
has a unique solution $ v\in W^{2,p}(\Omega)\cap W^{1,p}_0(\Omega) $ for every $p$ satisfying \eqref{eq definition of p}.
By combining this with the Sobolev inequality, we obtain that
 \begin{itemize}
	\item [(1)] if $s\in (0,\frac{1}{2}]$, then for any $\alpha \in (0,1)$,  $v\in C^{1,\alpha}(\overline\Omega)$;
	
	\item [(2)] if $s\in (\frac{1}{2},1)$, then for any $\alpha\in (0,2-2s)$,  $v\in C^{1,\alpha}(\overline\Omega)$.
\end{itemize}

Finally, the Lax-Milgram Theorem yields that $ u=v $. In particular, according to  \eqref{eq:W 2p estimate}, \eqref{eq: H 1} and the Sobolev inequality, we obtain that
\begin{equation}
	\begin{split}
		\|u\|_{C^{1,\alpha}(\overline{\Omega})}&\leqslant C\left(\|u\|_{L^\infty(\Omega)}+\|g(\cdot,u(\cdot))\|_{L^\infty(\Omega)}\right)
		\leqslant C \left(\|u\|_{L^\infty(\Omega)}+c(1+\|u\|_{L^\infty(\Omega)}^{2^*-1})\right),
	\end{split}
\end{equation}
where the positive constant $C$ depends only on $n,\alpha,\Omega,s$.
\end{proof}

\subsection{Interior  $ C^{1,\alpha} $-regularity}\label{sec: C^1-interior regularity}\setcounter{equation}{0}
The  interior $ C^{1,\alpha} $-regularity for the mixed operator $ \mathcal{L}$ will be derived by a mollifier technique and a cutoff argument. We split the proof of Theorem~\ref{th C^1,alpha interior without boundary condition} into three steps.

\noindent{Step 1.} Regularize the solution $u$ by the standard mollifier to get some properties about $u_\epsilon $. See subsection~\ref{sec: Mollifier argument 1 }.

\noindent{Step 2.} Use the cut off argument for $u_\epsilon$ and employing \cite[Proposition 2.18]{MR4560756} to get the interior $C^{1,\alpha}$-norm  of $u_\epsilon$ in $B_R$ for any $R>0$ small enough. See subsection~\ref{sub A priori C{2,alpha}-estimate 1}.

\noindent{Step 3.}  Apply  Proposition~\ref{pro final estimate 1} and  Arzel\`{a}-Ascoli Theorem  to get the inclusion. See subsection~\ref{sec:Proof of Theorem1.5 1}.

We now introduce some notations.  
\begin{itemize}
\item [(a)] Let $V$ be an open set  with $ V\Subset\Omega $ and denote
\begin{equation}\label{definition of rho 1}
	\rho:=\text{ dist}(V,\partial\Omega),\quad \text{  and } \quad  V_{\delta}:=\left\{x\in \Omega: \text{ dist}(x, V)<\delta\right\}.
\end{equation}
\item [(b)] For any given $x_0\in V_{\frac{\rho}{4}}$ where $V_{\frac{\rho}{4}} $ is given in \eqref{definition of rho 1}, denote
\begin{equation}\label{x_0}
	0<R<\min\left(\frac{1}{2},\frac{\rho}{10}\right),\quad B_{R}(x_0):=\left\{x\in \Omega: |x-x_0|<R \right\}.
\end{equation}
\item [(c)] Denote the interior norm  as follows:
\begin{equation}
	[u]_{\alpha;B_{R}(x_0)}=\sup\limits_{x,y\in B_{R}(x_0)}\frac{|u(x)-u(y)|}{|x-y|^\alpha},\qquad 0<\alpha<1;
\end{equation}
\begin{equation}
	|u|^\prime_{k;B_{R}(x_0)}=\sum\limits_{j=0}\limits^{k}R^j\|D^j u\|_{L^\infty(B_{R}(x_0))};
\end{equation}
\begin{equation}
	|u|^\prime_{k,\alpha;B_{R}(x_0)}=|u|^\prime_{k;B_{R}(x_0)}+R^{k+\alpha}[D^ku]_{\alpha;B_{R}(x_0)}.
\end{equation}
\item[(d)] Let $u\in X^1$ and $\|u\|_{L^\infty(\Omega)}$ be finite.  Denote the interval as follows: \begin{equation}\label{I}
		{\rm I_u}=\left[-\|u\|_{L^\infty(\Omega)},\|u\|_{L^\infty(\Omega)}\right].
	\end{equation}
\end{itemize}
\subsubsection{{A Mollifier technique}}\label{sec: Mollifier argument 1 }		

Let $u\in X^1$ solve the equation \eqref{1.5} and $\eta_\epsilon$ is the standard mollifier. For every $x\in\mathbb{R}^n$, we denote 
\[ u_\epsilon(x):=(\eta_\epsilon\ast u)(x)= \int_{\Omega} \eta_\epsilon(x-y)u(y)\, dy= \int_{|y|\leqslant \epsilon} \eta_\epsilon(y)u(x-y)\, dy\quad \forall 0<\epsilon<R. \]

Recall that if $u$ solves the following equation in the weak sense
\begin{equation}
-\Delta u+(-\Delta)^s u= g(x,u) \qquad \text{  in } \Omega,
\end{equation} 
then, for every $\epsilon\in(0,R)$ small enough, 
\begin{equation}\label{eq u epsilon1}
-\Delta(\eta_\epsilon\ast u)+(-\Delta)^s (\eta_\epsilon \ast u)= \eta_\epsilon \ast g(x,u) \qquad \text{  in } V_{\frac{3\rho}{4}}.
\end{equation}
Let us  define 
$ g_\epsilon:=\eta_\epsilon\ast g(x,u) $. Then, $u_\epsilon$ and $g_\epsilon$ satisfy the following equation 
\begin{equation}
-\Delta u_\epsilon+(-\Delta)^s u_\epsilon= g_\epsilon \qquad \text{  in } V_{\frac{3\rho}{4}}.
\end{equation}
 Moreover, the following regularity estimates on $u_\epsilon$ and $g_\epsilon$ follow as the direct consequences of
the standard properties of the convolutions
\begin{Proposition}\label{pro u epsilon1} Assume that $u$ is a bounded solution of~\eqref{1.5}. 
	\begin{itemize} 
		\item  Then $u_\epsilon \in C^{2}(\overline V_{\frac{3\rho}{4}})\cap L^\infty(\mathbb{R}^n)$. In particular, for every $  \epsilon\in(0,R)$
		\begin{equation}\label{eq:u epsilon C11}
			\|u_\epsilon\|_{L^\infty(\mathbb{R}^n)}\leqslant \|u\|_{L^\infty(\mathbb{R}^n)}.
		\end{equation}
		\item For every $x_0\in V_{\frac{\rho}{4}}$,	let $g\in L^\infty_{\rm loc}(\Omega\times\mathbb{R})$,  for any  $\epsilon\in (0,R)$. 
		Then  \begin{equation}\label{g_epsilon1}
			\|g_\epsilon\|_{L^\infty( B_{R}(x_0))}\leqslant \|g\|_{L^\infty(\overline B_{2R}(x_0) \times {\rm I_u})}.
		\end{equation}
	\end{itemize}
\end{Proposition}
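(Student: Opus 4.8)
The plan is to read off both assertions from the elementary properties of mollification; no new idea is required, only some care with the nested neighbourhoods so that all the quantities involved live well inside $\Omega$.

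For the first bullet, I would proceed as follows. Since $u$ is bounded we have $u\in L^\infty(\mathbb{R}^n)\subset L^1_{\rm loc}(\mathbb{R}^n)$, while $\eta_\epsilon\in C_c^\infty(B_\epsilon)$. Writing $u_\epsilon(x)=\int_{\mathbb{R}^n}\eta_\epsilon(x-y)\,u(y)\,dy$ and differentiating under the integral sign---which is legitimate because every derivative $D^\beta\eta_\epsilon$ is bounded with compact support and $u$ is locally integrable---gives $D^\beta u_\epsilon(x)=\int_{\mathbb{R}^n}(D^\beta\eta_\epsilon)(x-y)\,u(y)\,dy$ for every multi-index $\beta$, so that $u_\epsilon\in C^\infty(\mathbb{R}^n)$; I would only record the $C^2$-regularity on $\overline V_{\frac{3\rho}{4}}$, since that is all that is used in the sequel. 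For~\eqref{eq:u epsilon C11} I would invoke Young's inequality for convolutions, namely $\|u_\epsilon\|_{L^\infty(\mathbb{R}^n)}\leqslant\|\eta_\epsilon\|_{L^1(\mathbb{R}^n)}\,\|u\|_{L^\infty(\mathbb{R}^n)}=\|u\|_{L^\infty(\mathbb{R}^n)}$, using $\eta_\epsilon\geqslant0$ and $\int_{\mathbb{R}^n}\eta_\epsilon\,dx=1$.

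For the second bullet, I would start from $g_\epsilon=\eta_\epsilon\ast g(\cdot,u(\cdot))$, that is $g_\epsilon(x)=\int_{|y|\leqslant\epsilon}\eta_\epsilon(y)\,g\big(x-y,u(x-y)\big)\,dy$ for $x\in B_R(x_0)$. The key geometric remark is that, if $x\in B_R(x_0)$ and $|y|\leqslant\epsilon<R$, then $x-y\in\overline B_{2R}(x_0)$; moreover, since $x_0\in V_{\rho/4}$ yields $\mathrm{dist}(x_0,\partial\Omega)\geqslant\tfrac{3\rho}{4}$ while $2R<\tfrac{\rho}{5}$ by~\eqref{x_0}, the closed ball $\overline B_{2R}(x_0)$ is a compact subset of $\Omega$ with a definite gap to $\partial\Omega$. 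Consequently $|u(x-y)|\leqslant\|u\|_{L^\infty(\Omega)}$, that is $u(x-y)\in{\rm I_u}$, so that $\big(x-y,u(x-y)\big)\in\overline B_{2R}(x_0)\times{\rm I_u}$ and hence $|g(x-y,u(x-y))|\leqslant\|g\|_{L^\infty(\overline B_{2R}(x_0)\times{\rm I_u})}$, the right-hand side being finite because $g\in L^\infty_{\rm loc}(\Omega\times\mathbb{R})$ and $\overline B_{2R}(x_0)\times{\rm I_u}$ is compact in $\Omega\times\mathbb{R}$. Integrating against $\eta_\epsilon$ and using $\int_{\mathbb{R}^n}\eta_\epsilon\,dy=1$ then gives~\eqref{g_epsilon1}.

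I do not expect a genuine obstacle here: the statement collapses to Young's inequality, differentiation under the integral sign, and the boundedness of $u$. The only point that requires a moment's attention is the geometric bookkeeping---verifying that the enlarged ball $\overline B_{2R}(x_0)$ remains strictly inside $\Omega$, which is precisely where the constraints $\mathrm{dist}(V,\partial\Omega)=\rho$ and $R<\rho/10$ from~\eqref{x_0} enter---since it is exactly this inclusion that lets both $u$ and $g$ be controlled on $\overline B_{2R}(x_0)$.
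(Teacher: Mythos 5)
Your argument is correct and is exactly the route the paper takes: the paper states this proposition without proof, remarking only that it is a ``direct consequence of the standard properties of the convolutions'', and your write-up (differentiation under the integral sign, Young's inequality with $\eta_\epsilon\geqslant 0$ and $\int\eta_\epsilon=1$, and the inclusion $\overline B_{2R}(x_0)\subset\Omega$ guaranteed by $x_0\in V_{\rho/4}$ and $R<\rho/10$) is precisely the intended justification.
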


%


\subsubsection{{A priori $C^{1,\alpha}$-estimate}}\label{sub A priori C{2,alpha}-estimate 1}

We now use the cut off argument for $u_\epsilon$ to estimate the $C^{1,\alpha}$-norm  of $u_\epsilon$. 

Consider a cut off function $ \phi\in C^\infty_0(\mathbb{R}^n,\mathbb{R}) $ satisfying 
\begin{equation}\label{cutoff function 21}
\begin{cases}
	\phi \equiv 1 \text{  on } B_{\frac{3}{2}}(0)\\
	\text{supp}(\phi)\subset B_{2}(0);\\
	0\leqslant \phi\leqslant 1 \text{  on } \mathbb{R}^n.
\end{cases}
\end{equation}
 Let us take $x_0\in V_{\frac{\rho}{4}}$ and denote 
\begin{equation}\label{R}
	\phi^R(x):=\phi\left(\frac{x-x_0}{R}\right).
\end{equation}
Thus, one has that
\[ B_{4R}(x_0)\subset V_{\frac{3\rho}{4}}\subset \Omega, \quad \text{supp}(\phi^R)\subset B_{2R}(x_0). \]

\begin{Lemma}\label{lemma v epsilon1} 
We assume that  $ g(\cdot,u(\cdot))\in L^\infty(\Omega) $.
Suppose that  $ u\in C^{2}(\overline V_{\frac{3\rho}{4}})\cap L^\infty(\mathbb{R}^n)$  solves the following equation 
\begin{equation}\label{u is a solution in V1}
	-\Delta u+(-\Delta)^s u= g \qquad \text{  in } V_{\frac{3\rho}{4}}.
\end{equation}

Then,  for every $x_0\in V_{\frac{\rho}{4}}$, there exists~$ \psi\in L^{\infty}( B_{R}(x_0),\mathbb{R}) $ such that $ v:=\phi^R u $ satisfies 
\begin{equation}\label{eq: new equation21}
	-\Delta v+(-\Delta)^s v= \psi \qquad \text{  in } V_{\frac{3\rho}{4}}.
\end{equation}
In addition, 
\begin{equation}\label{estimate psi 21}
	R^2\|\psi\|_{L^\infty(B_{R}(x_0))}\leqslant  C_{\rho,n,s}\left(\| g(\cdot,u(\cdot))\|_{L^\infty(B_{R}(x_0))}+\|u\|_{L^\infty(\mathbb{R}^n)}\right)
\end{equation}
for some positive constant $C_{n,s,\rho}$.

\end{Lemma}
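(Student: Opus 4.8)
The plan is to set $\psi:=-\Delta v+(-\Delta)^sv$ and check that this is a bounded function on $B_R(x_0)$ satisfying \eqref{estimate psi 21}; the equation \eqref{eq: new equation21} then holds in $V_{\frac{3\rho}{4}}$ (indeed in all of $\mathbb{R}^n$) by construction. The first thing to verify is that $v=\phi^R u$ is genuinely of class $C^2$ with compact support, so that $(-\Delta)^s v$ makes classical pointwise sense. Since $x_0\in V_{\frac{\rho}{4}}$ and $R<\frac{\rho}{10}$, one has $\text{supp}(\phi^R)\subset B_{2R}(x_0)\subset B_{4R}(x_0)\subset V_{\frac{3\rho}{4}}\subset\Omega$; as $u\in C^2(\overline{V}_{\frac{3\rho}{4}})$ and $\phi^R\in C^\infty_c$, the product $\phi^R u$ is $C^2$ on the neighbourhood $B_{4R}(x_0)$ of $\overline{\text{supp}(\phi^R)}$ and vanishes outside $B_{2R}(x_0)$, hence $v\in C^2_c(\mathbb{R}^n)$. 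Together with $u\in L^\infty(\mathbb{R}^n)$ (which supplies the integrability at infinity needed for the fractional Laplacian), this guarantees that $(-\Delta)^s v$ is a bounded continuous function on $\mathbb{R}^n$, so $\psi$ is well defined; the only real content left is the bound on $B_R(x_0)$.

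To obtain it I would exploit that, by \eqref{cutoff function 21}--\eqref{R}, $\phi^R\equiv1$ on $B_{\frac{3R}{2}}(x_0)\supset B_R(x_0)$. On $B_R(x_0)$ this forces $v\equiv u$, hence $-\Delta v=-\Delta u$ there, and therefore, using that $u$ solves \eqref{u is a solution in V1},
\[
\psi=-\Delta u+(-\Delta)^su+\bigl[(-\Delta)^sv-(-\Delta)^su\bigr]=g+(-\Delta)^s(v-u)\qquad\text{on }B_R(x_0).
\]
Now $v-u=(\phi^R-1)u$ vanishes identically on $B_{\frac{3R}{2}}(x_0)$, so for $x\in B_R(x_0)$ the integrand below has no singularity at $y=x$, the principal value is an ordinary integral, and
\[
(-\Delta)^s(v-u)(x)=c_{n,s}\int_{\{|y-x_0|\geqslant\frac{3R}{2}\}}\frac{\bigl(1-\phi^R(y)\bigr)\,u(y)}{|x-y|^{n+2s}}\,dy .
\]
On this region $0\leqslant1-\phi^R\leqslant1$, $|u(y)|\leqslant\|u\|_{L^\infty(\mathbb{R}^n)}$, and $|x-y|\geqslant|y-x_0|-|x-x_0|\geqslant\frac{3R}{2}-R=\frac{R}{2}$, whence
\[
\bigl|(-\Delta)^s(v-u)(x)\bigr|\leqslant c_{n,s}\|u\|_{L^\infty(\mathbb{R}^n)}\int_{\{|z|\geqslant\frac{R}{2}\}}\frac{dz}{|z|^{n+2s}}\leqslant C_{n,s}\,R^{-2s}\,\|u\|_{L^\infty(\mathbb{R}^n)} .
\]

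Combining the last two displays, $|\psi(x)|\leqslant\|g(\cdot,u(\cdot))\|_{L^\infty(B_R(x_0))}+C_{n,s}R^{-2s}\|u\|_{L^\infty(\mathbb{R}^n)}$ for every $x\in B_R(x_0)$; since $R<\frac12$ and $s\in(0,1)$, so that $R^2\leqslant1$ and $R^{2-2s}\leqslant1$, multiplying through by $R^2$ gives exactly \eqref{estimate psi 21}, with constant depending only on $n,s$ (hence a fortiori on $n,s,\rho$).

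I do not expect a genuine obstacle here: the statement is, in essence, the (fractional) Leibniz rule combined with elementary integral estimates. The one place where a little care is needed is the verification that $v=\phi^R u\in C^2_c(\mathbb{R}^n)$, which is precisely where the geometric hypotheses $x_0\in V_{\frac{\rho}{4}}$ and $R<\frac{\rho}{10}$ are used, via the chain of inclusions above. It is also worth noting that working only on the small ball $B_R(x_0)$, on which $\phi^R$ is \emph{constant}, is what makes the argument so short: it kills the commutator-type terms $-(\Delta\phi^R)u$, $-2\nabla\phi^R\cdot\nabla u$, $u\,(-\Delta)^s\phi^R$ and the cross term $c_{n,s}\int\frac{(\phi^R(x)-\phi^R(y))(u(x)-u(y))}{|x-y|^{n+2s}}\,dy$ that would otherwise arise from the full product formula; these remain bounded away from $B_R(x_0)$, since $v$ is smooth and compactly supported, but they play no role in the subsequent covering/iteration step.
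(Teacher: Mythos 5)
Your proposal is correct and follows essentially the same route as the paper: both identify $\psi=g-(-\Delta)^s\bigl(u(1-\phi^R)\bigr)$ on $B_R(x_0)$ (using $\phi^R\equiv1$ on $B_{3R/2}(x_0)$ to kill the local commutator terms) and then bound the remaining nonlocal tail integral by $C_{n,s}R^{-2s}\|u\|_{L^\infty(\mathbb{R}^n)}$, the only cosmetic difference being your lower bound $|x-y|\geqslant R/2$ in place of the paper's $|x-y|\geqslant\frac{1}{5}(R+|y-x_0|)$.
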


\begin{proof}
From  the definition of the function $ v $, we know that
\begin{equation}\label{eq: function space of v21}
	v\in C^{2}_0( B_{2R}(x_0))\subset C^{2}_0(\mathbb{R}^n). 
\end{equation}
Furthermore, to check~\eqref{eq: new equation21}, one can calculate that  
\begin{equation}\label{eq: new equation:PSI2 11}
	\psi= g(x,u)+\Delta (u(1-\phi^R))-(-\Delta)^s (u(1-\phi^R))\quad  \text{  in } V_{\frac{3\rho}{4}}.
\end{equation} 
We also notice that $\phi^R=1$ in $B_{3R/2}(x_0)$ and that  \begin{equation}\label{eq: new equation:PSI21}
	\psi= g(x,u)-(-\Delta)^s (u(1-\phi^R))\quad  \text{  in } B_{3R/2}(x_0).
\end{equation}

We  now obtain suitable $ L^\infty $- estimates on the function $  (-\Delta)^s (u(1-\phi^R)) $.
Observing that for $x\in B_{R}(x_0)$, we have that
\begin{equation}
	(-\Delta)^s (u(1-\phi^R))(x)=\int_{\mathbb{R}^n\backslash B_{3R/2}(x_0)} \frac{-u(1-\phi^R)(y)}{|x-y|^{n+2s}}\, dy.
\end{equation}
{F}rom this and the fact that \begin{equation}\label{fact}
	|x-y|\geqslant |y-x_0-R|\geqslant |y-x_0|-R\geqslant \frac{1}{5}(R+|y-x_0|)\quad \forall y\in \mathbb{R}^n\backslash B_{3R/2}(x_0), x\in B_{R}(x_0),
\end{equation}
one obtains that
\begin{equation}
	\begin{split}\label{step 2 11}
		\|(-\Delta)^s (u(1-\phi^R))\|_{L^\infty(B_{R}(x_0))}\leqslant& 5^{n+2s}\int_{ \mathbb{R}^n\backslash B_{3R/2}(x_0)}\frac{|u(y)|}{(R+|y-x_0|)^{n+2s}}\, dy\\
		\leqslant& C_{n,s}\|u\|_{L^\infty(\mathbb{R}^n)}R^{-2s}.
	\end{split}
\end{equation}
Since $ g(\cdot,u(\cdot))\in L^\infty(B_{R}(x_0)) $, we have 
\begin{equation}
	R^2\|\psi\|_{L^\infty(B_{R}(x_0))}\leqslant  C_{\rho,n,s}\left(\| g(\cdot,u(\cdot))\|_{L^\infty(B_{R}(x_0))}+\|u\|_{L^\infty(\mathbb{R}^n)}\right)
\end{equation}
for some positive constant $C_{n,s,\rho}$.
\end{proof}
According to \eqref{eq u epsilon1}, we know that 		for every $\epsilon\in(0,R)$ small enough,  $u_\epsilon\in C^{2}(\overline V_{\frac{3\rho}{4}})$ solves the following equation 
\begin{equation}
-\Delta u_\epsilon+(-\Delta)^s u_\epsilon= g_\epsilon \qquad \text{  in } V_{\frac{3\rho}{4}},
\end{equation}
where $g_\epsilon$ satisfies the estimate \eqref{g_epsilon1}.  

Consider the cut off function $\phi^R$ given by \eqref{R}, let $v_\epsilon:=\phi^R u_\epsilon$, then 	applying Lemma~\refeq{lemma v epsilon1}, one has there exists~$ \psi_\epsilon\in L^{\infty}(B_{R}(x_0),\mathbb{R}) $ such that $ v_\epsilon $ satisfies  
\begin{equation}\label{eq: v epsilon equation1}
-\Delta v_\epsilon+(-\Delta)^s v_\epsilon= \psi_\epsilon \qquad \text{  in } V_{\frac{3\rho}{4}},
\end{equation}
where 
\begin{equation}
\psi_\epsilon= g_\epsilon(x,u)+\Delta (u_\epsilon(1-\phi^R))-(-\Delta)^s (u_\epsilon(1-\phi^R))\quad  \text{  in } V_{\frac{3\rho}{4}}.
\end{equation} 
Also, $\phi^R=1$ in $ B_{3R/2}(x_0)$ and 
\begin{equation}\label{eq: new equation:PSI epsilon1}
\psi_\epsilon= 	g_\epsilon(x,u)-(-\Delta)^s (u_\epsilon(1-\phi^R))\quad  \text{  in } B_{3R/2}(x_0).
\end{equation} 
In particular, employing \eqref{estimate psi 21}, one finds that
\begin{equation}
\begin{split}\label{eq:psi epsilon1}
	R^2\|\psi_\epsilon\|_{L^\infty(B_{R}(x_0))}&\leqslant  C_{\rho,n,s} \left(\| g_\epsilon(\cdot,u(\cdot))\|_{L^\infty(B_{R}(x_0))}+\|u_\epsilon\|_{L^\infty(\mathbb{R}^n)}\right),
\end{split}
\end{equation}
where  $ C_{\rho,n,s} $ is a positive constant.

\smallskip
 We notice that,  
for $v_\epsilon\in C^\infty_0( B_{2R}(x_0))$, one has 
\begin{equation}
\begin{split}\label{aa}
	R^2\|(-\Delta)^s v_\epsilon\|_{L^\infty(B_{R}(x_0))}
	&\leqslant C_{n,s,\rho}\left(R^{1+\beta}[Dv_\epsilon]_{0,\beta;B_{2R}(x_0)}+\|v_\epsilon\|_{L^\infty(B_{2R}(x_0))}\right)\\
	&= C_{n,s,\rho}\bigg(\|\phi^R u_\epsilon\|_{L^\infty(B_{2R}(x_0))}+R^{1+\beta}[\phi^R D u_\epsilon+u_\epsilon D\phi^R]_{0,\beta;B_{2R}(x_0)}	\bigg)\\
	&\leqslant C_{n,s,\rho}
	|u_\epsilon|^\prime_{1,\beta;B_{2R}(x_0)},
\end{split}
\end{equation}
for every $\beta\in(\max\left\{0,2s-1\right\},1). $  

As a result, for any given $\alpha\in(\max\left\{0,2s-1\right\},1)$,  taking $\beta=\frac{\max\left\{0,2s-1\right\}+\alpha}{2} $,  invoking \cite[Proposition 2.18]{MR4560756}, and utilizing \eqref{aa} and interpolation inequality, one has   for all~$\delta>0$
there exists $ C_{\delta}>0  $ such that  
\begin{equation}\label{C 2 of v 21}
\begin{split}
	|v_\epsilon|^\prime_{{1,\alpha}; B_{R/2}(x_0)}&\leqslant C_{n,\alpha}\left(\|v_\epsilon\|_{L^\infty(B_{R}(x_0))}+R^2\left(\|\psi_\epsilon\|_{L^\infty(B_{R}(x_0))}+\|(-\Delta)^s v_\epsilon\|_{L^\infty(B_{R}(x_0))}\right)\right)\\
	&\leqslant C_{n,\alpha,\rho,\alpha}\bigg(\|v_\epsilon\|_{L^\infty(B_{R}(x_0))}+R^2\|\psi_\epsilon\|_{L^\infty(B_{R}(x_0))}+|u_\epsilon|^\prime_{1,\beta;B_{2R}(x_0)}\bigg)\\
	&\leqslant C_{n,\alpha,\rho,\alpha}\bigg(\|v_\epsilon\|_{L^\infty(B_{R}(x_0))}+R^2\|\psi_\epsilon\|_{L^\infty(B_{R}(x_0))}\\
	&\qquad\qquad\qquad +\delta |u_\epsilon|^\prime_{1,\alpha;B_{2R}(x_0)} +C_\delta \|u_\epsilon\|_{L^\infty(B_{2R}(x_0))}\bigg). 
\end{split}
\end{equation}
Therefore, for every $x_0\in V_{\frac{\rho}{4}}$, recalling~\eqref{eq:psi epsilon1} and applying Propositions~\ref{pro u epsilon1}, we know that $ \forall \delta>0$, there exists $C_\delta$ such that 
{\begin{equation}\label{final equation1}
	\begin{split}
		&\quad\|u_\epsilon\|^\prime_{{1,\alpha}; B_{R/2}(x_0)}=|v_\epsilon|^\prime_{{1,\alpha}; B_{R/2}(x_0)}\\
		&\leqslant C_{n,s, \alpha,\rho}\left(R^2\|\psi_\epsilon\|_{L^\infty(B_{R}(x_0))}+\delta |u_\epsilon|^\prime_{{1,\alpha}; B_{2R}(x_0)}+C_\delta \|u_\epsilon\|_{L^\infty(B_{2R}(x_0))}\right)\\
		&\leqslant C_{n,s, \alpha,\rho}\bigg(\|g\|_{L^\infty(V_{3\rho/4} \times {\rm I_u})}+C_\delta\|u\|_{L^\infty(\mathbb{R}^n)} +\delta |u_\epsilon|^\prime_{{1,\alpha}; B_{2R}(x_0)}\bigg),
	\end{split}
\end{equation}
for every $R\in\left(0,\frac{\rho}{10}\right)$, $\epsilon\in (0,R)$,  and for some 
constant $ C_{n,s, \alpha,\rho}>0 $.

\subsubsection{Interior $C^{1,\alpha}$-regularity }\label{sec:Proof of Theorem1.5 1}

Based on \eqref{final equation1}, we can obtain the following Proposition~\ref{pro final estimate 1}  by adapting \cite[Lemma 8]{MR3331523}, which can guarantee that the $C^{1,\alpha}$-norm of $u_\epsilon$ is uniformly bounded in $\epsilon$ in some ball.

\begin{Proposition}\label{pro final estimate 1}
	
	Let $y\in \mathbb{R}^n, d>0$, and $u\in C^{1,\alpha}(B_{d}(y))$. Suppose that  for any $\delta > 0 $, there
	exists $\Lambda_\delta> 0$ such that, for any $x\in B_{d}(y)$ and any $r\in(0, d-|x-y|]$, we have
	\begin{equation}\label{eq:assumption u1}
		|u|^\prime_{1,\alpha;B_{r/8}(x)}\leqslant \Lambda_\delta+ \delta |u|^\prime_{1,\alpha;B_{r/2}(x)}.
	\end{equation}
	
	Then, there exist constants $\delta_0, C > 0$, depending only on $n,\alpha,d$, such that
	\begin{equation}
		\|u\|_{C^{1,\alpha}(B_{d/8}(y))}\leqslant C\Lambda_{\delta_0}.
	\end{equation}
\end{Proposition}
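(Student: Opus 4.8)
The plan is to run a scaled iteration on balls with geometrically shrinking radii, controlling the interior norms $|u|'_{1,\alpha;B_{r/8}(x)}$ along the way and choosing $\delta$ small enough so that the $\delta$-term in~\eqref{eq:assumption u1} can be absorbed into a convergent geometric series. The key technical point is that the quantities $|u|'_{k,\alpha;B_r(x)}$ are designed to scale well: on a ball of radius $r$, the $j$-th derivative term is weighted by $r^j$ and the H\"older seminorm of the top derivative by $r^{k+\alpha}$, so passing from a ball of radius $r$ to one of radius $r/2$ only changes these weights by universal factors. First I would fix $x\in B_{d/8}(y)$, set $r_0$ comparable to $d$ (say $r_0 = d - |x-y| \ge 7d/8$), and define $r_j := r_0/2^j$, together with a sequence of centers still inside $B_d(y)$; the hypothesis~\eqref{eq:assumption u1} applied at scale $r_j$ gives $|u|'_{1,\alpha;B_{r_j/8}(x)} \le \Lambda_\delta + \delta\,|u|'_{1,\alpha;B_{r_j/2}(x)}$.

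The heart of the argument is to convert this into a recursion between consecutive dyadic scales. Since $r_{j+1}/2 = r_j/4$ and $r_j/8 < r_j/4$, a covering/monotonicity step lets me compare $|u|'_{1,\alpha;B_{r_j/2}(x)}$ with a bounded number of balls of radius $r_{j+1}/8$ centered at nearby points, each of which is again controlled by~\eqref{eq:assumption u1}. Carefully bookkeeping the powers of $2$ coming from the rescaling of the $r^j$ and $r^{k+\alpha}$ weights, one obtains an inequality of the shape
\begin{equation*}
a_{j} \le C_1 \Lambda_\delta + C_1 \delta\, a_{j+1},
\end{equation*}
where $a_j$ denotes (a normalized version of) $|u|'_{1,\alpha;B_{r_j/8}(x)}$ and $C_1 = C_1(n,\alpha)$. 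Choosing $\delta_0$ so that $C_1\delta_0 \le 1/2$, iterating this inequality $N$ times yields $a_0 \le 2C_1\Lambda_{\delta_0} + (C_1\delta_0)^N a_N \le 2C_1\Lambda_{\delta_0} + 2^{-N} a_N$. Since $u\in C^{1,\alpha}(B_d(y))$ by assumption, the $a_N$ are a priori finite (bounded by $\|u\|_{C^{1,\alpha}(B_d(y))}$ up to scale factors), so letting $N\to\infty$ kills the last term and gives $a_0 \le 2C_1\Lambda_{\delta_0}$, i.e. $|u|'_{1,\alpha;B_{r_0/8}(x)} \le C\Lambda_{\delta_0}$. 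Taking the supremum over $x\in B_{d/8}(y)$ and recalling $r_0/8 \ge 7d/64$ (so that these balls cover $B_{d/8}(y)$ with the H\"older seminorm patched together in the standard way) produces the claimed bound on $\|u\|_{C^{1,\alpha}(B_{d/8}(y))}$.

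The main obstacle I expect is the bookkeeping in the covering step: one must verify that $|u|'_{1,\alpha;B_{r/2}(x)}$ is genuinely controlled by finitely many terms $|u|'_{1,\alpha;B_{r'/8}(x')}$ at the next scale with constants depending only on $n$ (not on $j$), and that the H\"older seminorm of $Du$ — which is \emph{not} additive over a covering — can nonetheless be recovered from the local pieces. The usual device is to bound $[Du]_{\alpha;B_{r/2}(x)}$ by splitting into the case where the two points are close (handled by a single small ball from the cover) and the case where they are far apart (handled by the $L^\infty$ bound on $Du$, which is additive), at the cost of an extra interpolation that is already absorbed into the $\Lambda_\delta$ and $\delta$ structure of~\eqref{eq:assumption u1}. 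This is exactly the mechanism of~\cite[Lemma~8]{MR3331523}, and the proof amounts to checking that it transfers verbatim to the present weighted norms; the a priori finiteness $u\in C^{1,\alpha}(B_d(y))$ is what makes the limit $N\to\infty$ legitimate and is used in an essential way.
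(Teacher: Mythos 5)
Your proposal has the right core mechanism (covering argument plus absorption, made legitimate by the a priori finiteness of the $C^{1,\alpha}$ norm), and you correctly flag the central technical point that $[Du]_{\alpha}$ is not additive over a covering and must be handled via the weighted interior norms $|u|'_{1,\alpha}$ and the device of \cite[Lemma~7]{MR3331523}. However, the dyadic recursion $a_j\le C_1\Lambda_\delta + C_1\delta\,a_{j+1}$ as you write it does not close at a single center $x$: after the covering step, $|u|'_{1,\alpha;B_{r_j/2}(x)}$ is bounded by a \emph{sum} of $|u|'_{1,\alpha;B_{r_{j+1}/8}(x_k')}$ over finitely many \emph{different} centers $x_k'$, so iterating produces a branching tree of balls, not a linear chain $a_0\to a_1\to\cdots\to a_N$. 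The only way to close the recursion is to take a supremum over all admissible centers and scales at each step, and once you do that the iteration collapses to a single absorption. This is exactly what the paper does: it sets
$M:=\sup\{|u|'_{1,\alpha;B_{r/8}(x)}:\ x\in B_d(y),\ 0<r\le d-|x-y|\}$, notes $M<\infty$ because $u\in C^{1,\alpha}(B_d(y))$, covers $B_{r/8}(x)$ by $N=N(n,\lambda)$ balls $B_{\lambda r/8}(x_k)$, verifies $B_{r/2}(x_k)\subset B_d(y)$ so that the hypothesis~\eqref{eq:assumption u1} applies at the smaller scale, and uses \cite[Lemma~7]{MR3331523} to obtain $M\le CN\Lambda_\delta + CN\delta M$ in one pass; choosing $\delta_0=1/(2CN)$ then gives $M\le 2CN\Lambda_{\delta_0}$, and the statement follows by taking $x=y$, $r=d$ (no further patching over $B_{d/8}(y)$ is needed). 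So your proposal is morally correct but should be streamlined: replace the dyadic chain and the $N\to\infty$ limit with the single self-bounding inequality for $M$, and be explicit that the covering is what forces the passage to a supremum rather than a fixed-center recursion.
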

\begin{proof}
	We notice that, since $r<d$, one has
	\[ |u|^\prime_{1,\alpha;B_{r/8}(x)}\leqslant  |u|^\prime_{1,\alpha;B_{d}(y)},\]
	which implies that 
	\[ M:=\sup\limits_{x\in B_{d}(y), r\in(0, d-|x-y|]} |u|^\prime_{1,\alpha;B_{r/8}(x)}<+\infty.\]
	
	We now use a covering argument: pick $\lambda\in(0,\frac{1}{2}]$, to be chosen later, and	fixed any $x\in B_d(y)$ and $ r\in(0, d-|x-y|]$, we cover $B_{r/8}(x)$ with finitely many balls $\left\{B_{\lambda r/8}(x_k)\right\}_{k=1}^N$ with $x_k\in B_{r/8}(x) $,  for some $N $ depending only on $\lambda,n$.
	
	Furthermore, we observe that $B_{r/2}(x_k)\subset B_{d}(y)$. Indeed,
	\[ |x_k-y|+\frac{r}{2}\leqslant |x_k-x|+|x-y|+\frac{r}{2}<r+|x-y|<d. \]
	Consequently, since $\lambda\leqslant\frac{1}{2}$, we use  \eqref{eq:assumption u} (with $x = x_k $ and $ r$ scaled to $\lambda r$) to get 
	\begin{equation}\label{eq lambda1}
		|u|^\prime_{1,\alpha;B_{\lambda r/8}(x_k)}\leqslant \Lambda_\delta+ \delta |u|^\prime_{1,\alpha;B_{\lambda r/2}(x_k)}.
	\end{equation}
	Then, using \cite[Lemma 7]{MR3331523} with $\rho:=r/8$ and $\lambda=1/8$, recalling  the definition of $M$, and employing \eqref{eq lambda1}, one has 
	\begin{equation}
		\begin{split}
			|u|^\prime_{1,\alpha;B_{r/8}(x)}&\leqslant C\sum\limits_{k=1}\limits^{N}|u|^\prime_{1,\alpha;B_{\lambda r/8}(x_k)}
			\leqslant C_\lambda N \Lambda_{\delta}+ C_\lambda \delta \sum\limits_{k=1}\limits^{N}|u|^\prime_{1,\alpha;B_{\lambda r/2}(x_k)}\\
			&\leqslant C N \Lambda_{\delta}+ C\delta \sum\limits_{k=1}\limits^{N}|u|^\prime_{1,\alpha;B_{ r/16}(x_k)}
			\leqslant CN \Lambda_{\delta}+ C N\delta M.
		\end{split}
	\end{equation}
	It follows from the definition of $M$ that 
	\[ M\leqslant  C N \Lambda_{\delta}+ C N\delta M,  \]
	by choosing  $\delta=\delta_0:=1/(2CN)$, one has 
	\[ M\leqslant  2C N \Lambda_{\delta_0}.\]
	Thus we have proved that
	\[ |u|^\prime_{1,\alpha;B_{r/8}(x)}\leqslant  2C N \Lambda_{\delta_0}.\qquad \forall x\in B_d(y),  r\in(0, d-|x-y|]. \]
	Setting $x=y, r=d$, we have 
	\[ |u|^\prime_{1,\alpha;B_{d/8}(y)}\leqslant  2 CN \Lambda_{\delta_0}, \]
	which implies the desired result that 
	\[ \|u\|_{C^{1,\alpha}(\overline B_{d/8}(y))}\leqslant  2\max\left\{d^{-1-\alpha},1\right\}C N \Lambda_{\delta_0}.\qedhere\]
\end{proof}

\begin{proof}[Proof of Theorem~\ref{th C^1,alpha interior without boundary condition}]
	First of all, one can choose $d=\rho/5 $ in Proposition~\ref{pro final estimate 1}, so that 
	\begin{equation}\label{d}
		B_{d}(y)\subset V_{\rho/4} \quad \text{and}\quad  d/4<\frac{\rho}{10}, \quad \forall y\in V.
	\end{equation}
	As a consequence, applying  Proposition~\ref{pro final estimate 1}, utilizing \eqref{final equation1},
	we know that  
	\begin{equation}
		\|u_\epsilon\|_{C^{1,\alpha}(\overline B_{\rho/40}(y))}\leqslant C\left(\|g\|_{L^\infty( V_{{3\rho}/{4}} \times{\rm I_u})}+C_{\delta_0} \|u\|_{L^\infty(\mathbb{R}^n)}\right),
	\end{equation}
	for every $y\in V$, where the constant $ C>0 $ depends on $ n,s,\alpha,\rho $.

	Applying the Arzel\`{a}-Ascoli Theorem, we obtain that  $ u\in C^{1,\alpha}(\overline B_{\rho/40}(y)). $ More specifically, one has that
	\begin{equation}\label{final inequality}
		\|u\|_{C^{1,\alpha}(\overline B_{\rho/40}(y))}\leqslant C\left(\|g\|_{L^\infty( V_{{3\rho}/{4}} \times{\rm I_u})}+ \|u\|_{L^\infty(\mathbb{R}^n)}\right),
	\end{equation}
	for every $y\in V$ and $\alpha\in(\max\left\{0,2s-1\right\},1)$, and for	some constant $ C>0 $ depending on $ n,s,\alpha,\rho $. 

Using the cover argument again, there exist  finitely many balls $\left\{B_{\rho/40}(y_k)\right\}_{k=1}^N$ with $y_k\in V $,  for some $N$ depending on $n,\rho,V$, such that 
\[ V\subset \bigcup\limits_{k=1}\limits^NB_{\rho/40}(y_k). \]
Thus, we can obtain
\begin{equation}\label{final inequality 1}
	\|u\|_{C^{1,\alpha}(\overline V)}\leqslant C\left(\|g\|_{L^\infty( V_{{3\rho}/{4}} \times{\rm I_u})}+ \|u\|_{L^\infty(\mathbb{R}^n)}\right),
\end{equation}
for every $\alpha\in(0,1)$,	where the constant $ C>0 $ depends on $ n,s,\alpha,\rho $.
\end{proof}

\section{ $ C^{2,\alpha} $-regularity}\label{sec: C^2- regularity}\setcounter{equation}{0}
In this section, we derive the interior $ C^{2,\alpha} $-regularity and the $ C^{2,\alpha} $-regularity  up to the boundary for the mixed operator $ \mathcal{L}$.

\subsection{Interior  $ C^{2,\alpha} $-regularity}\label{sec: C^2-interior regularity}
The aim of this section is to establish an interior $ C^{2,\alpha} $-regularity theory  for the problem \eqref{Maineq}. 
To this end, we should overcome 
the additional difficulty caused by the nonlinearity $ g(x,u(x))$.
More precisely, since the nonlinearity $g$ depends on unknown function $u$,  both the H\"{o}lder continuity of nonlinearity $ g $ and the regularity of $u$ impact the regularity of $g(x,u(x))$ as a function of x.  For this,  we 	apply a suitable truncation argument again of the solution $ u $ with an interior $ C^{1,\alpha} $-regularity argument to have a version of $C^{2,\alpha}$ regularity result stated in Theorem~\ref{th C^2,alpha interior without boundary condition}.
				
				
We  split the proof of Theorem~\ref{th C^2,alpha interior without boundary condition} into two steps:
				
\begin{itemize}
	\item [(a)]  Employ a mollifier technique and a cut off argument  to get the interior $C^{2,\alpha}$-norm of convolution  $u_\epsilon$ in subsection~\ref{sub A priori C{2,alpha}-estimate}.
	\item [(b)]Apply  Proposition~\ref{pro final estimate} and  Arzel\`{a}-Ascoli Theorem  to get the inclusion in subsection~\ref{sec:Proof of Theorem1.5}.
		\end{itemize}
				
%
%
%

	\subsubsection{{A mollifier technique and a truncation argument}}\label{sub A priori C{2,alpha}-estimate}
			
	Let $u\in X^1$ solve the equation \eqref{1.6}, then the convolutions
	$u_\epsilon$ and $g_\epsilon$ satisfy the following equation 
	\begin{equation}\label{u epsilon}
	-\Delta u_\epsilon+(-\Delta)^s u_\epsilon= g_\epsilon \qquad \text{  in } V_{\frac{3\rho}{4}}.
			\end{equation}

			
 Moreover, from 
		the standard properties of the convolutions, one deduce that
		\begin{Proposition}\label{pro u epsilon2} Assume that $u$ is a bounded solution of~\eqref{1.6}. 
			\begin{itemize} 
				\item  Then $u_\epsilon \in C^{2,\alpha}(\overline V_{\frac{3\rho}{4}})\cap L^\infty(\mathbb{R}^n)$. In particular, for every $  \epsilon\in(0,R)$
				\begin{equation}
					\|u_\epsilon\|_{L^\infty(\mathbb{R}^n)}\leqslant \|u\|_{L^\infty(\mathbb{R}^n)}.
				\end{equation}
	\item  For every  $x_0\in V_{\frac{\rho}{4}}$, let $g\in C^{\alpha}_{\rm loc}(\Omega\times\mathbb{R}^n)$ for any  $\alpha\in (0,1)$. 
	If $u\in C^1_{\rm loc}(\Omega)$,  then for every   $\epsilon\in (0,R)$ 
	\begin{equation}\label{g_epsilon *}
		\|g_\epsilon(\cdot,u(\cdot))\|_{C^{\alpha}(\overline B_{R}(x_0))}\leqslant \|g\|_{C^{\alpha}(\overline B_{2R}(x_0) \times{\rm I_u})}\left(1+\|Du\|_{L^\infty(B_{2R}(x_0))}\right).
	\end{equation}
			\end{itemize}
		\end{Proposition}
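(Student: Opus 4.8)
The plan is to read off both bullets from standard properties of convolution with a mollifier $\eta_\epsilon$, together with the $C^{1,\alpha}$-regularity already established in Theorem~\ref{th C^1,alpha interior without boundary condition}. For the first bullet, since $\eta_\epsilon\in C_c^\infty$ one has $u_\epsilon=\eta_\epsilon\ast u\in C^\infty$ wherever the convolution is defined; the only point is to control where this is legitimate on $\overline V_{3\rho/4}$, which follows because for $\epsilon<R$ small enough and $x\in\overline V_{3\rho/4}$ the ball $B_\epsilon(x)$ stays inside a slightly larger open subset of $\Omega$ on which $u$ is, by Theorem~\ref{th C^1,alpha interior without boundary condition}, of class $C^{1,\alpha}$ (indeed $C^{2,\alpha}$ once we know $g(\cdot,u(\cdot))\in C^\alpha$). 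Hence $u_\epsilon\in C^{2,\alpha}(\overline V_{3\rho/4})$. The bound $\|u_\epsilon\|_{L^\infty(\mathbb R^n)}\le\|u\|_{L^\infty(\mathbb R^n)}$ is immediate from $\int_{\mathbb R^n}\eta_\epsilon=1$, $\eta_\epsilon\ge0$, and $|u_\epsilon(x)|\le\int\eta_\epsilon(x-y)|u(y)|\,dy\le\|u\|_{L^\infty(\mathbb R^n)}$.

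For the second bullet, I would first estimate the $C^\alpha$-norm of the function $x\mapsto g(x,u(x))$ on $\overline B_{2R}(x_0)$, and then pass the estimate through the convolution. Writing $h(x):=g(x,u(x))$, for $x,z\in\overline B_{2R}(x_0)$ one has
\begin{equation*}
|h(x)-h(z)|\le |g(x,u(x))-g(z,u(x))|+|g(z,u(x))-g(z,u(z))|\le \|g\|_{C^\alpha}\bigl(|x-z|^\alpha+|u(x)-u(z)|^\alpha\bigr),
\end{equation*}
where the $C^\alpha$-norm of $g$ is taken over $\overline B_{2R}(x_0)\times{\rm I_u}$ (note $u(x)\in{\rm I_u}$ by definition of ${\rm I_u}$). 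Using $u\in C^1_{\rm loc}(\Omega)$, so $|u(x)-u(z)|\le \|Du\|_{L^\infty(B_{2R}(x_0))}|x-z|$ and hence $|u(x)-u(z)|^\alpha\le \|Du\|_{L^\infty(B_{2R}(x_0))}^\alpha|x-z|^\alpha$, together with $|h(x)|\le\|g\|_{C^\alpha(\overline B_{2R}(x_0)\times{\rm I_u})}$ for the sup part, one obtains
\begin{equation*}
\|h\|_{C^\alpha(\overline B_{2R}(x_0))}\le \|g\|_{C^\alpha(\overline B_{2R}(x_0)\times{\rm I_u})}\bigl(1+\|Du\|_{L^\infty(B_{2R}(x_0))}\bigr),
\end{equation*}
after absorbing the $\alpha$-th power into the linear term (since $\|Du\|_{L^\infty}^\alpha\le 1+\|Du\|_{L^\infty}$ up to a harmless constant, or simply keeping the cleaner bound $1+\|Du\|_{L^\infty}$). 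Finally, since $g_\epsilon(\cdot,u(\cdot))=\eta_\epsilon\ast h$ and convolution with a probability density does not increase either the sup-norm or the H\"older seminorm, $\|g_\epsilon(\cdot,u(\cdot))\|_{C^\alpha(\overline B_R(x_0))}\le\|h\|_{C^\alpha(\overline B_{2R}(x_0))}$ provided $\epsilon<R$ so that the relevant values of $h$ used in the convolution over $\overline B_R(x_0)$ lie in $\overline B_{2R}(x_0)$. Combining the last two displays gives \eqref{g_epsilon *}.

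The only genuinely delicate point is a bookkeeping one: making sure the radii match, i.e. that for $\epsilon\in(0,R)$ the convolution $\eta_\epsilon\ast h$ evaluated on $\overline B_R(x_0)$ only samples $h$ on $\overline B_{2R}(x_0)$ (true since $R+\epsilon<2R$), and that $\overline B_{2R}(x_0)\subset V_{3\rho/4}\Subset\Omega$ so that $u\in C^1$ there — this is guaranteed by the standing choice $R<\min\{1/2,\rho/10\}$ and $x_0\in V_{\rho/4}$ made in \eqref{x_0}. Everything else is routine. I do not expect any substantive obstacle; the proposition is essentially a lemma packaging standard mollifier facts together with the elementary chain-rule-type H\"older estimate for the composition $x\mapsto g(x,u(x))$, which is exactly the quantity that the subsequent $C^{2,\alpha}$ bootstrap will need to feed into the Schauder-type estimate for $v_\epsilon=\phi^R u_\epsilon$.
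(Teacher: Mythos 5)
The paper itself gives no proof of this proposition — it merely says the claims follow ``from the standard properties of the convolutions.'' Your write-up supplies exactly the standard argument one would expect, and it is correct in substance; the role it plays (producing a $C^\alpha$ bound on $x\mapsto g_\epsilon(x)$ that the Schauder estimate for $v_\epsilon$ can consume) matches the paper's intent. Two small remarks.

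First, the detour through Theorem~\ref{th C^1,alpha interior without boundary condition} in the first bullet is unnecessary: for any $u\in L^\infty(\mathbb{R}^n)$ and any $\epsilon>0$, the convolution $u_\epsilon=\eta_\epsilon\ast u$ is $C^\infty(\mathbb{R}^n)$ outright, since you can differentiate under the integral by putting all derivatives on the smooth compactly supported kernel. No regularity of $u$ itself is needed, and the question of ``where the convolution is defined'' does not arise. The only genuine point to check for the first bullet is the sup-norm bound, which you handle correctly via $\eta_\epsilon\ge0$, $\int\eta_\epsilon=1$.

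Second, your two-term split of $|g(x,u(x))-g(z,u(z))|$ into a piece with frozen $t=u(x)$ plus a piece with frozen $x=z$ yields $[h]_\alpha\le[g]_\alpha(1+\|Du\|_{L^\infty}^\alpha)$, which is the stated bound only up to a universal constant (since $1+t^\alpha\le 2+t$ but not $\le 1+t$). If one instead estimates the composition directly against the product-space H\"older seminorm — $|g(x,u(x))-g(z,u(z))|\le[g]_\alpha\,\bigl(|x-z|^2+|u(x)-u(z)|^2\bigr)^{\alpha/2}\le[g]_\alpha\,(1+\|Du\|_{L^\infty})^\alpha|x-z|^\alpha$ and then uses $(1+t)^\alpha\le 1+t$ — one obtains \eqref{g_epsilon *} with constant exactly $1$. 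You flag this yourself (``up to a harmless constant''), and indeed the constant is immaterial in \eqref{final equation 2} where the estimate is used, so this is cosmetic. The radius bookkeeping ($\epsilon<R$ so that the convolution over $\overline B_R(x_0)$ only samples $h$ on $B_{2R}(x_0)\subset V_{3\rho/4}\Subset\Omega$) is handled correctly.
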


						We now use the cut off argument again for $u_\epsilon$ to get the $C^{2,\alpha}$-norm  of $u_\epsilon$. 
						
						
		\begin{Lemma}\label{lemma v epsilon}
	For any  $\alpha\in(0,1)$,	we assume that  $ g(\cdot,u(\cdot))\in C^\alpha_{\rm loc}(\Omega) $.
	Suppose that  $ u\in C^{2,\alpha}(\overline V_{\frac{3\rho}{4}})\cap L^\infty(\mathbb{R}^n)$  solves the following equation 
	\begin{equation}\label{u is a solution in V}
		-\Delta u+(-\Delta)^s u= g \qquad \text{  in } V_{\frac{3\rho}{4}}.
		\end{equation}

	Then,  for every $x_0\in V_{\frac{\rho}{4}}$, there exists~$ \psi\in C^{\alpha}( B_{R}(x_0),\mathbb{R}) $ such that $ v:=\phi^R u $ satisfies 
		\begin{equation}\label{eq: new equation2}
			-\Delta v+(-\Delta)^s v= \psi \qquad \text{  in } V_{\frac{3\rho}{4}},
		\end{equation}
		where $ \phi^R(x) $ is defined in \eqref{R}. In particular, 
		\begin{equation}\label{estimate psi 2}
		R^2|\psi|^\prime_{0,\alpha;B_{R}(x_0)}\leqslant  C_{\rho,n,s}\left(R^2| g(\cdot,u(\cdot))|^\prime_{0,\alpha;B_{R}(x_0)}+\|u\|_{L^\infty(\mathbb{R}^n)}\right)
		\end{equation}
		for some positive constant $C_{n,s,\rho}$.
							
			\end{Lemma}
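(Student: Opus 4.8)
The plan is to reproduce, almost verbatim, the argument of Lemma~\ref{lemma v epsilon1}, the only difference being that one now has to carry the H\"older seminorm (rather than just the sup norm) through the estimate of the nonlocal tail. First I would observe that, since $u\in C^{2,\alpha}(\overline V_{3\rho/4})$ and $\phi^R\in C^\infty_0(\mathbb{R}^n)$ with $\text{supp}(\phi^R)\subset B_{2R}(x_0)\subset V_{3\rho/4}$, the function $v:=\phi^R u$ lies in $C^{2,\alpha}_0(B_{2R}(x_0))\subset C^{2,\alpha}_0(\mathbb{R}^n)$, so $\mathcal L v$ is classically defined. Writing $v=u-u(1-\phi^R)$ and using the linearity of $\mathcal L$ together with the pointwise identity $-\Delta u+(-\Delta)^s u=g$ in $V_{3\rho/4}$, one obtains, exactly as in~\eqref{eq: new equation:PSI2 11}, that $v$ solves~\eqref{eq: new equation2} with
\[
\psi=g(x,u)+\Delta\bigl(u(1-\phi^R)\bigr)-(-\Delta)^s\bigl(u(1-\phi^R)\bigr)\qquad\text{in }V_{3\rho/4},
\]
a quantity which is well defined there because $u\in C^2(V_{3\rho/4})\cap L^\infty(\mathbb{R}^n)$. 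Since $\phi^R\equiv1$ on $B_{3R/2}(x_0)$, the function $u(1-\phi^R)$ vanishes on $B_{3R/2}(x_0)$; hence on $B_R(x_0)$ the Laplacian term drops and
\[
\psi=g(x,u)-w,\qquad w(x):=(-\Delta)^s\bigl(u(1-\phi^R)\bigr)(x)=-\int_{\mathbb{R}^n\setminus B_{3R/2}(x_0)}\frac{u(1-\phi^R)(y)}{|x-y|^{n+2s}}\,dy ,
\]
the last identity holding for $x\in B_R(x_0)$ because $u(1-\phi^R)$ is zero on $B_{3R/2}(x_0)\supset B_R(x_0)$, so the principal value degenerates into an absolutely convergent integral.

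It then remains to bound $|\psi|'_{0,\alpha;B_R(x_0)}$, which by the triangle inequality reduces to bounding $|w|'_{0,\alpha;B_R(x_0)}=\|w\|_{L^\infty(B_R(x_0))}+R^\alpha[w]_{\alpha;B_R(x_0)}$. For the sup norm I would simply repeat~\eqref{step 2 11}, using the geometric separation~\eqref{fact}, to get $\|w\|_{L^\infty(B_R(x_0))}\leqslant C_{n,s}\|u\|_{L^\infty(\mathbb{R}^n)}R^{-2s}$. For the seminorm, given $x,x'\in B_R(x_0)$ I would apply the mean value theorem to $\xi\mapsto|\xi-y|^{-n-2s}$ along the segment $[x,x']$: since by~\eqref{fact} every such $\xi$ satisfies $|\xi-y|\geqslant\frac15(R+|y-x_0|)$ for $y\notin B_{3R/2}(x_0)$, one gets
\[
\Bigl|\,|x-y|^{-n-2s}-|x'-y|^{-n-2s}\,\Bigr|\leqslant C_{n,s}\,\frac{|x-x'|}{(R+|y-x_0|)^{n+2s+1}},
\]
and integrating against the bounded density $|u(1-\phi^R)|$ over $\mathbb{R}^n\setminus B_{3R/2}(x_0)$ yields $|w(x)-w(x')|\leqslant C_{n,s}\|u\|_{L^\infty(\mathbb{R}^n)}|x-x'|\,R^{-2s-1}$. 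Bounding $|x-x'|\leqslant(2R)^{1-\alpha}|x-x'|^\alpha$ then gives $[w]_{\alpha;B_R(x_0)}\leqslant C_{n,s}\|u\|_{L^\infty(\mathbb{R}^n)}R^{-2s-\alpha}$, whence
\[
R^2|w|'_{0,\alpha;B_R(x_0)}\leqslant C_{n,s}\|u\|_{L^\infty(\mathbb{R}^n)}\,R^{2-2s}\leqslant C_{n,s,\rho}\,\|u\|_{L^\infty(\mathbb{R}^n)},
\]
the last inequality because $R<\min\{1/2,\rho/10\}$ and $2-2s>0$. Combined with $\psi=g(x,u)-w$ on $B_R(x_0)$, this shows $\psi\in C^\alpha(B_R(x_0))$ and gives precisely~\eqref{estimate psi 2}.

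The only genuinely new ingredient compared with Lemma~\ref{lemma v epsilon1}, and the step I expect to require the most care, is this H\"older estimate for the tail $w$: one must check that differentiating the kernel once in $x$ costs exactly one extra power of $R^{-1}$, which, after the $R^\alpha$ rescaling built into the norm $|\cdot|'_{0,\alpha}$, leaves only the harmless factor $R^{2-2s}$ (bounded since $s<1$), with a constant depending on $n,s$ alone. Everything else is the linearity of $\mathcal L$ and the same geometric separation~\eqref{fact} already exploited in the $C^{1,\alpha}$ theory; in particular the assertion $\psi\in C^\alpha(B_R(x_0))$ follows for free, since $g(\cdot,u(\cdot))\in C^\alpha_{\rm loc}$ by hypothesis and $w$ is in fact smooth on $B_R(x_0)$.
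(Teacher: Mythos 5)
Your proposal is correct and arrives at exactly the paper's estimate \eqref{estimate psi 2}, but the route to the H\"older seminorm of the tail $w=(-\Delta)^s\bigl(u(1-\phi^R)\bigr)$ is genuinely simpler than the one in the paper. The paper splits the domain of integration into the two regions $|x_1-x_2|>\tfrac12|x_1-y|$ and $|x_1-x_2|\leqslant\tfrac12|x_1-y|$ (terms $I_1$ and $I_2$), treating the first by crudely bounding the kernel difference and inserting $1\leqslant\bigl(2|x_1-x_2|/|x_1-y|\bigr)^\alpha$, and the second by the fundamental theorem of calculus along $[x_1,x_2]$ together with the auxiliary lower bound $|tx_1+(1-t)x_2-y|\geqslant\tfrac12|x_1-y|$. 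You instead observe that $[x,x']\subset B_R(x_0)$ by convexity, so the separation inequality \eqref{fact} holds uniformly along the whole segment and the mean value theorem applies globally, giving the Lipschitz bound $|w(x)-w(x')|\leqslant C_{n,s}\|u\|_{L^\infty(\mathbb{R}^n)}|x-x'|R^{-2s-1}$; the H\"older seminorm then follows by interpolating $|x-x'|\leqslant(2R)^{1-\alpha}|x-x'|^\alpha$. This is tidier because the integration domain $\mathbb{R}^n\setminus B_{3R/2}(x_0)$ is uniformly bounded away from $B_R(x_0)$, so $w$ is actually Lipschitz (indeed smooth) there and the paper's near/far split in $y$, which is designed to handle a kernel singularity that is never reached, carries no real content. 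Both approaches give $[w]_{\alpha;B_R(x_0)}\leqslant C_{n,s}\|u\|_{L^\infty(\mathbb{R}^n)}R^{-2s-\alpha}$ and hence $R^2|w|^\prime_{0,\alpha;B_R(x_0)}\leqslant C_{n,s}\|u\|_{L^\infty(\mathbb{R}^n)}R^{2-2s}$, after which the conclusion is identical.
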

						
	\begin{proof}
	From  the definition of the function $ v $, we know that
	\begin{equation}\label{eq: function space of v2}
	v\in C^{2,\alpha}_0( B_{2R}(x_0))\subset C^{2,\alpha}_0(\mathbb{R}^n). 
			\end{equation}
							We also notice that $\phi^R=1$ in $B_{3R/2}(x_0)$ and that  \begin{equation}\label{eq: new equation:PSI2}
								\psi= g(x,u)-(-\Delta)^s (u(1-\phi^R))\quad  \text{  in } B_{3R/2}(x_0).
							\end{equation}
							\noindent\textbf{Step 1. $ C^\alpha $- estimates on the function $  g(x,u) $.}
							Since $ g(\cdot,u(\cdot))\in C^\alpha(\overline B_{R}(x_0)) $, we have 
							\begin{equation}
								\begin{split}
									R^2| g(\cdot,u(\cdot))|^\prime_{0,\alpha;B_{R}(x_0)}&=	R^2\| g(\cdot,u(\cdot))\|_{L^\infty(B_{R}(x_0))}+R^{2+\alpha}[g(\cdot,u(\cdot))]_{0,\alpha; B_{R}(x_0)}. 
			\end{split}
			\end{equation}
							
	\noindent\textbf{Step 2. $ C^\alpha $- estimates on the function $  (-\Delta)^s (u(1-\phi^R)) $.}
		Follow the proof of Lemma~\ref{lemma v epsilon1},	one obtains that
		\begin{equation}\label{step 2 1}
			\|(-\Delta)^s (u(1-\phi^R))\|_{L^\infty(B_{R}(x_0))}\leqslant C_{n,s}\|u\|_{L^\infty(\mathbb{R}^n)}R^{-2s}.
		\end{equation}				
		
		For every $x_1,x_2\in B_{R}(x_0)$ with $x_1\neq x_2$, 
							\begin{equation}
								\begin{split}
									&|(-\Delta)^s (u(1-\phi^R))(x_1)-(-\Delta)^s (u(1-\phi^R))(x_2)|\\
									\leqslant & \int_{\mathbb{R}^n\backslash B_{3R/2}(x_0)} |-u(1-\phi^R)(y)| \left|\frac{1}{|x_1-y|^{n+2s}}-\frac{1}{|x_2-y|^{n+2s}}\right|\, dy\\	\leqslant &\int_{|x_1-x_2|>\frac{|x_1-y|}{2}\cap \mathbb{R}^n\backslash B_{3R/2}(x_0)}|u(y)|\left|\frac{1}{|x_1-y|^{n+2s}}-\frac{1}{|x_2-y|^{n+2s}}\right|\, dy\\
	&\quad +\int_{|x_1-x_2|\leqslant\frac{|x_1-y|}{2}\cap \mathbb{R}^n\backslash B_{3R/2}(x_0)}|u(y)|\left|\frac{1}{|x_1-y|^{n+2s}}-\frac{1}{|x_2-y|^{n+2s}}\right|\, dy\\
		:=& I_1+I_2.
		\end{split}
	\end{equation}
	For $I_1$, we see that
	\begin{equation}
	\begin{split}\label{I1}
	I_1 &\leqslant 2\int_{|x_1-x_2|>\frac{|x_1-y|}{2}\cap \mathbb{R}^n\backslash B_{3R/2}(x_0)}\frac{|u(y)||x_1-x_2|^\alpha}{|x_1-y|^{n+2s+\alpha}}\, dy\\
	&\leqslant 2\int_{ \mathbb{R}^n\backslash B_{3R/2}(x_0)}\frac{|u(y)||x_1-x_2|^\alpha}{|x_1-y|^{n+2s+\alpha}}\, dy\\
	&\leqslant  C_{n,s}|x_1-x_2|^\alpha \int_{ \mathbb{R}^n}\frac{|u(y)|}{(R+|y-x_0|)^{n+2s+\alpha}}\, dy\\
	&\leqslant C_{n,s}\|u\|_{L^\infty(\mathbb{R}^n)}|x_1-x_2|^\alpha R^{-2s-\alpha}.
	\end{split}
		\end{equation}
	We also observe that, for every $t\in [0,1]$,
	\[ |tx_1+(1-t)x_2-y|\geqslant |x_1-y|-|x_1-x_2|\geqslant \frac{|x_1-y|}{2}, \quad \text{if} \quad |x_1-x_2|\leqslant\frac{|x_1-y|}{2}, \] so that,
		\begin{equation}
	\begin{split}\label{I2}
	I_2&\leqslant \int_{|x_1-x_2|\leqslant\frac{|x_1-y|}{2}\cap \mathbb{R}^n\backslash B_{3R/2}(x_0)}|u(y)|\left|\int_{0}^{1}\frac{|x_1-x_2|}{|tx_1+(1-t)x_2-y|^{n+2s+1}}\, dt\right|\, dy\\
		&\leqslant C_{n,s}\int_{\mathbb{R}^n\backslash B_{3R/2}(x_0)}|u(y)|\left|\int_{0}^{1}\frac{|x_1-x_2|^\alpha |x_1-y|^{1-\alpha} }{|x_1-y|^{n+2s+1}}\, dt\right|\, dy\\
	&\leqslant C_{n,s}\int_{\mathbb{R}^n\backslash B_{3R/2}(x_0)}|u(y)|\frac{|x_1-x_2|^\alpha  }{|x_1-y|^{n+2s+\alpha}}\, dy\\
	&\leqslant  C_{n,s}|x_1-x_2|^\alpha \int_{ \mathbb{R}^n}\frac{|u(y)|}{(R+|y-x_0|)^{n+2s+\alpha}}\, dy\leqslant C_{n,s}\|u\|_{L^\infty(\mathbb{R}^n)}|x_1-x_2|^\alpha R^{-2s-\alpha}.
		\end{split}
	\end{equation}
	Combining \eqref{I1} and \eqref{I2}, one has 
	\begin{equation}
	[(-\Delta)^s (u(1-\phi^R))]_{0,\alpha;B_{R}(x_0)}\leqslant C_{n,s}\|u\|_{L^\infty(\mathbb{R}^n)}R^{-2s-\alpha}.
		\end{equation}
	As a consequence of this and \eqref{step 2 1}, we have that
	\begin{equation}
			\begin{split}
		&	R^2| (-\Delta)^s (u(1-\phi^R))|^\prime_{0,\alpha;B_{R}(x_0)}\\
		=&	R^2\| (-\Delta)^s (u(1-\phi^R))\|_{L^\infty(B_{R}(x_0))}+R^{2+\alpha}[(-\Delta)^s (u(1-\phi^R))]_{\alpha; B_{R}(x_0)}\\
	\leqslant& C_{n,s}\|u\|_{L^\infty(\mathbb{R}^n)}R^{2-2s}.
	\end{split}
		\end{equation}	
	In light of \textbf{Step 1} and \textbf{Step 2}, we obtain
		\begin{equation}
		\begin{split}
R^2|\psi|^\prime_{0,\alpha;B_{R}(x_0)}&=	R^2\| \psi\|_{L^\infty(B_{R}(x_0))}+R^{2+\alpha}[\psi]_{\alpha; B_{R}(x_0)}\\
	&\leqslant C_{\rho,n,s} \left(R^2| g(\cdot,u(\cdot))|^\prime_{0,\alpha;B_{R}(x_0)}+\|u\|_{L^\infty(\mathbb{R}^n)}\right)
	\end{split}
	\end{equation}
		for some positive constant $C_{n,s,\rho}$.
			\end{proof}

		According to \eqref{u epsilon}, we know that 		for every $0<\epsilon<R$ small enough,  $u_\epsilon\in C^{2,\alpha}(\overline V_{\frac{3\rho}{4}})$ solves the following equation 
		\begin{equation}
		-\Delta u_\epsilon+(-\Delta)^s u_\epsilon= g_\epsilon \qquad \text{  in } V_{\frac{3\rho}{4}},
						\end{equation}
		where $g_\epsilon$ satisfies the estimate \eqref{g_epsilon *}.  
						
		Consider the cut off function $\phi^R$ given by \eqref{cutoff function 21}, let $v_\epsilon:=\phi^R u_\epsilon$, then 	applying Lemma~\refeq{lemma v epsilon}, one has there exists~$ \psi_\epsilon\in C^{\alpha}(B_{R}(x_0),\mathbb{R}) $ such that $ v_\epsilon $ satisfies  
		\begin{equation}\label{eq: v epsilon equation}
		-\Delta v_\epsilon+(-\Delta)^s v_\epsilon= \psi_\epsilon \qquad \text{  in } V_{\frac{3\rho}{4}},
			\end{equation}
							where
		\begin{equation}\label{eq: new equation:PSI epsilon}
		\psi_\epsilon= 	g_\epsilon(x,u)-(-\Delta)^s (u_\epsilon(1-\phi^R))\quad  \text{  in } B_{3R/2}(x_0).
					\end{equation} 
		In particular, employing \eqref{estimate psi 2}, one finds that
		\begin{equation}
		\begin{split}\label{eq:psi epsilon}
		R^2|\psi_\epsilon|^\prime_{0,\alpha;B_{R}(x_0)}&\leqslant  C_{\rho,n,s} \left(R^2| g_\epsilon(\cdot,u(\cdot))|^\prime_{0,\alpha;B_{R}(x_0)}+\|u_\epsilon\|_{L^\infty(\mathbb{R}^n)}\right),
			\end{split}
		\end{equation}
		where  $ C_{\rho,n,s} $ is a positive constant.
							
			\smallskip
		{Observing that 
		for $v_\epsilon\in C^\infty_0( B_{2R}(x_0))$, for all~$\delta>0$,
			there exists $ C_{\delta}>0  $ such that  
			\begin{equation}
			\begin{split}\label{bb}
		R^2|(-\Delta)^s v_\epsilon|^\prime_{0,\alpha;B_{R}(x_0)}&= R^2\|(-\Delta)^s v_\epsilon\|_{L^\infty(B_{R}(x_0))} +R^{2+\alpha}[(-\Delta)^s v_\epsilon]_{\alpha; B_{R}(x_0)}\\
		&\leqslant C_{n,s,\rho}\bigg(R^2\|D^2 v_\epsilon\|_{L^\infty(B_{2R}(x_0))}+\|v_\epsilon\|_{L^\infty(B_{2R}(x_0))}\\
		&\qquad+R^{2+\alpha_0}[D^2 v_\epsilon]_{\alpha_0;B_{2R}(x_0)}+R\|D v_\epsilon\|_{L^\infty(B_{2R}(x_0))}	\bigg)\\
		&= C_{n,s,\rho}\bigg(R^2\|u_\epsilon D^2\phi^R +2Du_\epsilon D\phi^R+\phi^RD^2 u_\epsilon\|_{L^\infty(B_{2R}(x_0))}\\
		&\qquad+\|\phi^Ru_\epsilon\|_{L^\infty(B_{2R}(x_0))}+R\|\phi^R D u_\epsilon+u_\epsilon D\phi^R\|_{L^\infty(B_{2R}(x_0))}
										\\
		&\qquad +R^{2+\alpha_0}[u_\epsilon D^2\phi^R +2Du_\epsilon D\phi^R+\phi^RD^2 u_\epsilon]_{\alpha_0;B_{2R}(x_0)}	\bigg)\\
		&\leqslant C_{n,s,\rho}
		|u_\epsilon|^\prime_{2,\alpha_0;B_{2R}(x_0)}
		\leqslant\delta |u_\epsilon|^\prime_{2,\alpha;B_{2R}(x_0)} +C_\delta \|u_\epsilon\|_{L^\infty(B_{2R}(x_0))},
			\end{split}
			\end{equation}
			where 
		\begin{equation}
			\alpha_0=\begin{cases}
			0\qquad & \alpha<2-2s\\
			\alpha-(1-s) &\alpha \geqslant 2-2s.
				\end{cases}
			\end{equation}
		Thus, combining \cite[Theorem 4.6]{GTbook} 	} and \eqref{bb}, one has for all~$\delta>0$
			there exists $ C_{\delta}>0  $ such that  
			\begin{equation}\label{C 2 of v 2}
			\begin{split}
		|v_\epsilon|^\prime_{2,\alpha;B_{R/2}(x_0)}&\leqslant C_{n,\alpha}\left(\|v_\epsilon\|_{L^\infty(B_{R}(x_0))}+R^2\left(|\psi_\epsilon|^\prime_{0,\alpha;B_{R}(x_0)}+|(-\Delta)^s v_\epsilon|^\prime_{0,\alpha;B_{R}(x_0)}\right)\right)\\
		&\leqslant C_{n,\alpha,\rho,\alpha}\bigg(\|v_\epsilon\|_{L^\infty(B_{R}(x_0))}+R^2|\psi_\epsilon|^\prime_{0,\alpha;B_{R}(x_0)}\\
		&\qquad\qquad\qquad +\delta |u_\epsilon|^\prime_{2,\alpha;B_{2R}(x_0)}+C_\delta \|u_\epsilon\|_{L^\infty(B_{2R}(x_0))}\bigg). 
				\end{split}
							\end{equation}
							
		Therefore, for every $x_0\in V_{\frac{\rho}{4}}$, recalling the definition of $v_\epsilon$ and Proposition~\ref{pro u epsilon2}, owing to~\eqref{estimate psi 2}, we know that $ \forall \delta>0$, there exists $C_\delta$ such that 
		\begin{equation}\label{final equation 2}
	\begin{split}
	&|u_\epsilon|^\prime_{2,\alpha;B_{R/2}(x_0)}=|v_\epsilon|^\prime_{2,\alpha;B_{R/2}(x_0)}\\  \leqslant& \ C_{n,s, \alpha,\rho}\left(R^2|\psi_\epsilon|^\prime_{0,\alpha;B_{R}(x_0)}+\delta |u_\epsilon|^\prime_{2,\alpha;B_{2R}(x_0)}+C_\delta \|u\|_{L^\infty(B_{2R}(x_0))}\right)\\
	\leqslant &\ C\bigg(\|g\|_{C^{\alpha}(\overline B_{2R}(x_0) \times{\rm I_u})}\left(1+\|Du\|_{L^\infty(B_{2R}(x_0))}\right)+\|u\|_{L^\infty(\mathbb{R}^n)}\\
&\qquad +\delta |u_\epsilon|^\prime_{2,\alpha;B_{2R}(x_0)}+C_\delta \|u\|_{L^\infty(B_{2R}(x_0))}\bigg)\\
	\leqslant &\ C\bigg(\|g\|_{C^{\alpha}(\overline V_{\frac{3\rho}{4}} \times{I_u})}\left(1+\|Du\|_{L^\infty( V_{\frac{3\rho}{4}})}\right)+C_\delta \|u\|_{L^\infty(\mathbb{R}^n)}+\delta |u_\epsilon|^\prime_{2,\alpha;B_{2R}(x_0)}\bigg),
					\end{split}
		\end{equation} for every $R\in\left(0,\frac{\rho}{10}\right)$ and $\epsilon\in (0,R)$,
		where the constant $ C>0 $ depends only on $ n,s,\alpha,\rho $.
								}
								
		\subsubsection{Interior $C^{2,\alpha}$-regularity}\label{sec:Proof of Theorem1.5}

		In light of the estimate \eqref{final equation 2}, we can obtain the following Proposition~\ref{pro final estimate}  by adapting Proposition~\ref{pro final estimate 1}, which can guarantee the $C^{2,\alpha}$-norm of $u_\epsilon$ is uniformly bounded in $\epsilon$ in some ball.

			\begin{Proposition}\label{pro final estimate}
									
			Let $y\in \mathbb{R}^n, d>0$, and $u\in C^{2,\alpha}(B_{d}(y))$. Suppose that  for any $\delta > 0 $, there
			exists $\Lambda_\delta> 0$ such that, for any $x\in B_{d}(y)$ and any $r\in(0, d-|x-y|]$, we have
			\begin{equation}\label{eq:assumption u}
			|u|^\prime_{2,\alpha;B_{r/8}(x)}\leqslant \Lambda_\delta+ \delta |u|^\prime_{2,\alpha;B_{r/2}(x)}.
				\end{equation}
									
		Then, there exist constants $\delta_0, C > 0$, depending only on $n,\alpha,d$, such that
			\begin{equation}
			\|u\|_{C^{2,\alpha}(B_{d/8}(y))}\leqslant C\Lambda_{\delta_0}.
					\end{equation}
			\end{Proposition}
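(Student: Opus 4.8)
The plan is to follow the proof of Proposition~\ref{pro final estimate 1} almost verbatim, with the first--order interior seminorm $|\cdot|^\prime_{1,\alpha}$ replaced throughout by the second--order one $|\cdot|^\prime_{2,\alpha}$; the a priori bound \eqref{final equation 2} plays here exactly the role that \eqref{final equation1} plays there. First I would note that, since $r\in(0,\,d-|x-y|]$ forces $r<d$, one has $|u|^\prime_{2,\alpha;B_{r/8}(x)}\leqslant|u|^\prime_{2,\alpha;B_{d}(y)}<\infty$ for every admissible pair $(x,r)$, so that
\[
M:=\sup_{x\in B_{d}(y),\ r\in(0,\,d-|x-y|]}|u|^\prime_{2,\alpha;B_{r/8}(x)}<+\infty .
\]

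Next comes the covering argument. I would fix $\lambda\in(0,\tfrac12]$, to be chosen later, and for any $x\in B_d(y)$ and any admissible $r$ cover $B_{r/8}(x)$ by finitely many balls $\{B_{\lambda r/8}(x_k)\}_{k=1}^{N}$ with $x_k\in B_{r/8}(x)$ and $N=N(\lambda,n)$. One checks $B_{r/2}(x_k)\subset B_d(y)$ from $|x_k-y|+\tfrac r2\leqslant|x_k-x|+|x-y|+\tfrac r2<r+|x-y|<d$, so hypothesis~\eqref{eq:assumption u} may be applied on each $B_{\lambda r/8}(x_k)$ (with $\lambda r$ in place of $r$ and using $\lambda\leqslant\tfrac12$), giving $|u|^\prime_{2,\alpha;B_{\lambda r/8}(x_k)}\leqslant\Lambda_\delta+\delta\,|u|^\prime_{2,\alpha;B_{\lambda r/2}(x_k)}$. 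Summing over $k$, invoking the subadditivity estimate \cite[Lemma 7]{MR3331523} with $\rho:=r/8$ and $\lambda=1/8$, using $B_{\lambda r/2}(x_k)\subset B_{r/16}(x_k)$, and recalling the definition of $M$, I would arrive at
\[
|u|^\prime_{2,\alpha;B_{r/8}(x)}\leqslant C\sum_{k=1}^{N}|u|^\prime_{2,\alpha;B_{\lambda r/8}(x_k)}\leqslant CN\Lambda_\delta+CN\delta M ,
\]
with $C=C(n,\alpha,d)$ independent of $(x,r)$. Taking the supremum over $(x,r)$ yields $M\leqslant CN\Lambda_\delta+CN\delta M$; choosing $\delta=\delta_0:=1/(2CN)$ absorbs the last term, so $M\leqslant 2CN\Lambda_{\delta_0}$, and hence $|u|^\prime_{2,\alpha;B_{r/8}(x)}\leqslant 2CN\Lambda_{\delta_0}$ for all admissible $(x,r)$. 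Specialising to $x=y$ and $r=d$ gives $|u|^\prime_{2,\alpha;B_{d/8}(y)}\leqslant 2CN\Lambda_{\delta_0}$, and dividing by the geometric weights $(d/8)^j$ and $(d/8)^{2+\alpha}$ in the definition of the primed norm produces $\|u\|_{C^{2,\alpha}(\overline{B_{d/8}(y)})}\leqslant C\Lambda_{\delta_0}$ with $C$ depending only on $n,\alpha,d$, as claimed.

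The only point that asks for genuine care, rather than a mechanical copy, is checking that the absorption lemma \cite[Lemma 7]{MR3331523} and the scaling built into \eqref{eq:assumption u} remain compatible with the \emph{second--order} seminorm: one must keep all the radii $\lambda r/8$, $\lambda r/2$, $r/16$ comparable to $r$ (this is precisely why the choice $\lambda=1/8$ and the inclusion $B_{\lambda r/2}(x_k)\subset B_{r/16}(x_k)$ are used), so that the dimensional constant coming from the finite--overlap sum of the local $C^{2,\alpha}$ seminorms does not degenerate, and so that the resulting constant is independent of the mollification parameter $\epsilon$ when the proposition is subsequently applied to $u_\epsilon$ in Section~\ref{sec:Proof of Theorem1.5}. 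Aside from this bookkeeping, the argument is a routine transcription of the proof of Proposition~\ref{pro final estimate 1}.
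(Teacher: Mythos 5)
Your proposal is correct and is essentially the paper's own argument: the paper proves Proposition~\ref{pro final estimate} precisely by transcribing the covering/absorption proof of Proposition~\ref{pro final estimate 1} with $|\cdot|^\prime_{1,\alpha}$ replaced by $|\cdot|^\prime_{2,\alpha}$, which is what you have done, including the same choice $\lambda=1/8$, the inclusion $B_{\lambda r/2}(x_k)\subset B_{r/16}(x_k)$, and the absorption via $\delta_0=1/(2CN)$.
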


\begin{proof}[Proof of Theorem~\ref{th C^2,alpha interior without boundary condition}]
	First of all, one can choose $d=\rho/5 $ in Proposition~\ref{pro final estimate}, so that 
	\begin{equation}\label{d}
	B_{d}(y)\subset V_{\rho/4} \quad \text{and}\quad  d/4<\frac{\rho}{10}, \quad \forall y\in V.
		\end{equation}
	As a consequence, applying  Proposition~\ref{pro final estimate}, and utilizing   \eqref{final equation 2},	we know that  
		\begin{equation}
	\|u_\epsilon\|_{C^{2,\alpha}(\overline B_{\rho/40}(y))}\leqslant C\left(\|g\|_{C^{\alpha}(\overline V_{\frac{3\rho}{4}} \times{I_u})}\left(1+\|Du\|_{L^\infty(V_{\frac{3\rho}{4}})}\right)+C_{\delta_0} \|u\|_{L^\infty(\mathbb{R}^n)}\right)
			\end{equation}
	for every $y\in V$, where the constant $ C>0 $ depends on $ n,s,\alpha,\rho $.

	Applying the Arzel\`{a}-Ascoli Theorem, we obtain that  $ u\in C^{2,\alpha}(\overline B_{\rho/40}(y)). $ In particular,
		one has 
		\begin{equation}\label{final inequality 2}
			\|u\|_{C^{2,\alpha}(\overline B_{\rho/40}(y))}\leqslant C\left(\|g\|_{C^{\alpha}(\overline V_{\frac{3\rho}{4}} \times{I_u})}\left(1+\|Du\|_{L^\infty(V_{\frac{3\rho}{4}})}\right)+ \|u\|_{L^\infty(\mathbb{R}^n)}\right),
				\end{equation}
		for every $y\in V$, and for	some constant $ C>0 $ depending on $ n,s,\alpha,\rho $.

		Using the cover argument again, 
		and employing Theorem~\ref{th C^1,alpha interior without boundary condition}, we can obtain the desired result that 
	\begin{equation}\label{final inequality2}
	\begin{split}
	\|u\|_{C^{2,\alpha}(\overline V)}&\leqslant C\left(\|g\|_{C^{\alpha}(\overline V_{\frac{3\rho}{4}} \times{I_u})}\left(1+\|Du\|_{L^\infty(V_{\frac{3\rho}{4}})}\right)+ \|u\|_{L^\infty(\mathbb{R}^n)}\right)\\
	&\leqslant C\left(\|g\|_{C^{\alpha}(\overline V_{\frac{3\rho}{4}} \times{I_u})}\left(1+	\|g\|_{L^{\infty}(\overline V_{\frac{7\rho}{8}} \times{I_u})}+ \|u\|_{L^\infty(\mathbb{R}^n)}\right)+ \|u\|_{L^\infty(\mathbb{R}^n)}\right)\\
	&\leqslant C\left(\|u\|_{L^\infty(\mathbb{R}^n)}+\|g\|_{C^{\alpha}(\overline V_{\frac{7\rho}{8}} \times{I_u})}\right) \left(1+\|g\|_{C^{\alpha}(\overline V_{\frac{7\rho}{8}} \times{I_u})}\right)
				\end{split}
			\end{equation}
	where the constant $ C>0 $ depends on $ n,s,\alpha,\rho $.
		\end{proof}

As a straightforward corollary of Theorems~\ref{th:C^{1,alpha}}	and~\ref{th C^2,alpha interior without boundary condition}, we obtain the following conclusion containing the local and global regularity results of weak solution, which plays a pivotal role in establishing the strong maximum principle and the principal eigenvalue problem stated in the Appendix.
	\begin{Theorem}\label{th C^2,alpha interior}
		Let $ u$  be   a weak solution of \eqref{Maineq}. Assume that  $ \partial\Omega $ is  of class  $ C^{1,1} $ and 
		that {$ g\in C^\alpha_{\rm loc}(\Omega\times\mathbb{R}) $ } satisfies \eqref{eq: H 1} for any given $\alpha\in(0,1)$. 						Then $u\in C_{\rm loc}^{2,\alpha}(\Omega)\cap C(\mathbb{R}^n)$.
\end{Theorem}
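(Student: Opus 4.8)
The plan is to deduce everything by combining three results already established in the paper: the $L^\infty$-bound of Theorem~\ref{th: regularity}, the global $C^{1,\alpha}$-regularity up to the boundary of Theorem~\ref{th:C^{1,alpha}}, and the interior $C^{2,\alpha}$-regularity of Theorem~\ref{th C^2,alpha interior without boundary condition}. First I would note that, since $g$ satisfies~\eqref{eq: H 1} with some $q\in[1,2^*]$, Theorem~\ref{th: regularity} yields $u\in L^\infty(\Omega)$; as $u\in X_0^1$ we have $u=0$ a.e. in $\mathbb{R}^n\setminus\Omega$, hence $u\in L^\infty(\mathbb{R}^n)$ and the interval ${\rm I_u}$ is well defined. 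In particular, testing~\eqref{weaksolution} against functions supported in $\Omega$ shows that $u$ is a bounded weak solution in $X^1$ of the interior equation $-\Delta u+(-\Delta)^su=g(x,u)$ in $\Omega$, so the hypotheses of Theorem~\ref{th C^2,alpha interior without boundary condition} are met.

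For the continuity on all of $\mathbb{R}^n$: since $\partial\Omega$ is of class $C^{1,1}$ and $g$ satisfies~\eqref{eq: H 1}, Theorem~\ref{th:C^{1,alpha}} gives $u\in C^{1,\alpha}(\overline\Omega)$ for $\alpha\in(0,\min\{1,2-2s\})$; in particular $u\in C(\overline\Omega)$, and by the exterior Dirichlet condition $u=0$ on $\partial\Omega$. Extending $u$ by zero outside $\Omega$ therefore produces a function that is continuous across $\partial\Omega$, so $u\in C(\mathbb{R}^n)$.

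For the interior $C^{2,\alpha}$ statement: fix any $\alpha\in(0,1)$. Given an open set $V$ with $\overline V\subset\Omega$, apply Theorem~\ref{th C^2,alpha interior without boundary condition}, which is applicable because $g\in C^\alpha_{\rm loc}(\Omega\times\mathbb{R})$ and, by the first paragraph, $u\in X^1$ is a bounded weak solution of the interior equation. This gives $u\in C^{2,\alpha}(\overline V)$ together with the quantitative bound therein in terms of $\|u\|_{L^\infty(\mathbb{R}^n)}$ and $\|g\|_{C^\alpha(\overline V_{3\rho/4}\times{\rm I_u})}$ (both finite by Theorem~\ref{th: regularity} and the local H\"older continuity of $g$). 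Since $V\Subset\Omega$ is arbitrary, this is precisely $u\in C^{2,\alpha}_{\rm loc}(\Omega)$. Combined with the continuity established above, $u\in C^{2,\alpha}_{\rm loc}(\Omega)\cap C(\mathbb{R}^n)$, as claimed.

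The argument carries no genuine obstacle, which is why it is billed as a straightforward corollary: the only points requiring a line of justification are that $X_0^1\subset X^1$, that a weak solution of~\eqref{Maineq} restricts to a weak solution of the interior equation, and that $\|u\|_{L^\infty(\mathbb{R}^n)}<\infty$, all immediate from the definition of the working space and Theorem~\ref{th: regularity}. Everything else is a verbatim invocation of the two cited regularity theorems.
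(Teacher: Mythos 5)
Your proposal is correct and matches the paper's intended route: the paper presents Theorem~\ref{th C^2,alpha interior} explicitly as ``a straightforward corollary of Theorems~\ref{th:C^{1,alpha}} and~\ref{th C^2,alpha interior without boundary condition},'' and you supply exactly the glue the paper leaves implicit, namely the $L^\infty$-bound from Theorem~\ref{th: regularity} to make ${\rm I_u}$ finite and legitimize the hypotheses of Theorem~\ref{th C^2,alpha interior without boundary condition}, the extension-by-zero argument giving $u\in C(\mathbb{R}^n)$ from $u\in C^{1,\alpha}(\overline\Omega)$ with vanishing boundary trace, and the observation that an $X_0^1$ weak solution of~\eqref{Maineq} is an $X^1$ bounded weak solution of the interior equation.
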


\subsection{$ C^{2,\alpha} $-regularity up to the boundary}\label{sec: C^2alpha-global regularity}
The main ideas used here come from {\rm \cite[Theorem B.1]{FABEJK102}.
 We split the proof of Theorem~\ref{theorem C^2 alpha global} into the following two subsections:
\begin{itemize}
	\item[\ref{subsubsection 4.21}:]Estimate $ \|(-\Delta)^{s} u\|_{C^\alpha(\overline{\Omega})} $.
	\item[\ref{subsubsection 4.22}:] Apply the method of continuity to complete the proof of Theorem~\ref{theorem C^2 alpha global}.
\end{itemize}

\subsubsection{H\"{o}lder estimate of the fractional Laplacian term}\label{subsubsection 4.21}

Let $ \alpha\in(0,1) $, we consider the function space ${\mathcal{B}}_\alpha(\Omega)  $ defined as follows:
\begin{equation}\label{definition of B}
 {\mathcal{B}}_\alpha(\Omega):=\left\{u\in C(\mathbb{R}^n):u\equiv 0 \text{ in } \mathbb{R}^n\backslash \Omega \text{  and } u|_{\Omega}\in C^{2,\alpha}(\overline\Omega) \right\}.
\end{equation}
\begin{Lemma}\label{lemma C^alpha estimate }
	Let $ \Omega $ be a $ C^{2,\alpha} $ domain in $ \mathbb{R}^n $, $ u\in {\mathcal{B}}_\alpha(\Omega)$,  $ s\in(0,\frac{1}{2}) $ and $ \alpha+2s<1 $.
	
	Then, $ (-\Delta)^{s} u\in C^\alpha(\overline{\Omega})  $ and
		\begin{equation}\label{eq: C^alpha norm estimate of (0,1/2)}
		\|(-\Delta)^{s} u\|_{C^\alpha(\overline{\Omega})}\leqslant C\|u\|_{C^{1}(\overline{\Omega})},
		\end{equation}	
		where $ C $ depends only on $ n,s,\alpha,\Omega$.
\end{Lemma}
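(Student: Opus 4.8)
The plan is to reduce the lemma to the single fact that the zero extension of $u$ is globally Lipschitz on $\mathbb{R}^n$. Since $u\in C(\mathbb{R}^n)$ vanishes identically in $\mathbb{R}^n\backslash\Omega$ and $u|_\Omega\in C^1(\overline\Omega)$, by continuity $u=0$ on $\partial\Omega$; using that a $C^{2,\alpha}$ (indeed, any $C^1$) bounded domain is quasiconvex, this yields $|u(x)|\leqslant C(\Omega)\,\mathrm{dist}(x,\partial\Omega)\,\|\nabla u\|_{L^\infty(\Omega)}$ for $x\in\Omega$, hence
\[
L:=\|u\|_{C^{0,1}(\mathbb{R}^n)}\leqslant C(\Omega)\,\|u\|_{C^1(\overline\Omega)}.
\]
With this in hand, and since $2s<1$ (which is implied by $\alpha+2s<1$ and $\alpha>0$), the integral defining $(-\Delta)^s u$ converges absolutely and no principal value is needed: near the diagonal one uses $|u(x)-u(y)|\leqslant L|x-y|$ together with $\int_{\{|z|\leqslant 1\}}|z|^{1-n-2s}\,dz<\infty$, and away from the diagonal one uses that $u$ is bounded. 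This already gives $\|(-\Delta)^s u\|_{L^\infty(\mathbb{R}^n)}\leqslant C(n,s)\,L$.

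For the Hölder seminorm, I would fix $x_1,x_2\in\overline\Omega$, set $\delta:=|x_1-x_2|$, and after the substitutions $y=x_1+z$, $y=x_2+z$ write
\[
(-\Delta)^s u(x_1)-(-\Delta)^s u(x_2)=c_{n,s}\int_{\mathbb{R}^n}\frac{\bigl[u(x_1)-u(x_1+z)\bigr]-\bigl[u(x_2)-u(x_2+z)\bigr]}{|z|^{n+2s}}\,dz,
\]
splitting the integral over $\{|z|\leqslant 2\delta\}$ and $\{|z|>2\delta\}$. On the near region I estimate the two brackets separately by $L|z|$, obtaining a contribution $\leqslant C(n,s)\,L\,\delta^{1-2s}$ (finite exactly because $2s<1$). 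On the far region I rewrite the numerator as $w(x_1)-w(x_1+z)$ with $w(p):=u(p)-u(p+x_2-x_1)$; global Lipschitzianity gives $\|w\|_{L^\infty(\mathbb{R}^n)}\leqslant L\delta$, so the far contribution is $\leqslant 2\|w\|_{L^\infty(\mathbb{R}^n)}\int_{\{|z|>2\delta\}}|z|^{-n-2s}\,dz\leqslant C(n,s)\,L\,\delta^{1-2s}$. Summing, $|(-\Delta)^s u(x_1)-(-\Delta)^s u(x_2)|\leqslant C(n,s)\,L\,\delta^{1-2s}$, and since $\delta\leqslant\mathrm{diam}(\Omega)$ and $1-2s-\alpha>0$ by hypothesis, $\delta^{1-2s}\leqslant(\mathrm{diam}\,\Omega)^{1-2s-\alpha}\,\delta^{\alpha}$, whence $[(-\Delta)^s u]_{C^{\alpha}(\overline\Omega)}\leqslant C(n,s,\alpha,\Omega)\,L$. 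Together with the $L^\infty$ bound and the Lipschitz estimate for $u$, this proves \eqref{eq: C^alpha norm estimate of (0,1/2)}.

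The computation is elementary; the two points needing care are, first, the global Lipschitz bound for the zero extension of $u$ (where the boundary regularity enters, though much less than $C^{2,\alpha}$ would suffice) and, second, the exponent bookkeeping — both the near- and far-diagonal pieces of the increment naturally carry the power $\delta^{1-2s}$, and it is precisely the assumption $\alpha+2s<1$ (so that $1-2s>\alpha$, in particular $s<\tfrac12$) that lets this be reabsorbed into $\delta^{\alpha}$. This makes transparent why the argument must be replaced by a different one when $s\geqslant\tfrac12$ or $\alpha+2s\geqslant1$.
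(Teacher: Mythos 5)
Your proof is correct, and while the overall skeleton (split the increment of $(-\Delta)^s u$ into near- and far-diagonal pieces, exploit $\alpha+2s<1$) matches the paper, the execution differs in a way worth noting. The paper runs Step~1 by bounding the integrand via the intermediate seminorm $[u]_{C^\beta(\overline\Omega)}$ with $\beta=\alpha+2s$, applying it to the near difference $|u(x_i)-u(x_i+z)|$ over $|z|<r$ and to the cross difference $|u(x_1)-u(x_2)|$ over $|z|\geqslant r$, then optimizing at $r=|x_1-x_2|$; it then handles the $L^\infty$ bound in a separate Step~2 by evaluating $(-\Delta)^s u$ at an ``incenter'' $x_0$ and propagating with the seminorm estimate. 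You instead first record that the zero extension of $u$ is globally Lipschitz with constant $L\lesssim\|u\|_{C^1(\overline\Omega)}$ (which, as you note, needs no more than $u|_{\Omega}\in C^1(\overline\Omega)$ and $u=0$ on $\partial\Omega$, via a segment argument -- quasiconvexity is not really required), then get both the near piece and, via the auxiliary function $w(p)=u(p)-u(p+x_2-x_1)$, the far piece bounded by $C\,L\,\delta^{1-2s}$, and finally reabsorb into $\delta^{\alpha}$ using $1-2s\geqslant\alpha$. This buys you two things: the $L^\infty$ bound drops out immediately from the same Lipschitz control with no incenter argument, and the estimate is manifestly valid even when the shifted points $x_i+z$ lie outside $\overline\Omega$ -- a point the paper's Step~1 glosses over by applying the $\overline\Omega$-seminorm to differences $|u(x_1+z)-u(x_2+z)|$ that may straddle $\partial\Omega$, which is precisely where the global extension property you made explicit is silently being used. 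The arithmetic that drives both proofs ($1-2s\geqslant\alpha$) is the same; your version is a bit more self-contained and transparent about where the domain geometry enters.
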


We split the proof of Lemma~\ref{lemma C^alpha estimate } into two steps:
\begin{itemize}
	\item[{Step 1.}] Estimate $ [(-\Delta)^{s} u]_{C^\alpha(\overline{\Omega})} $ for every $ s\in(0,1) $ (see Lemma~\ref{lemma: C^alpha estimate of fraction laplacian }).
	\item[Step~2.] Estimate the $ L^\infty $-norm of $ (-\Delta)^{s} u $ for every $ s\in(0,1) $ (see Lemma~\ref{lemma: boundedness}).
\end{itemize}

\begin{Lemma}\label{lemma: C^alpha estimate of fraction laplacian }
	Suppose that $ \Omega $ is a $ C^{2,\alpha} $ domain in $ \mathbb{R}^n$, $ u\in {\mathcal{B}}_\alpha(\Omega) $, $ s\in(0,\frac{1}{2}) $ and $ \alpha+2s\leqslant 1 $.
	
	Then,
		\begin{equation}\label{eq: C^alpha estimate of (0,1/2)}
		[(-\Delta)^{s} u]_{C^\alpha(\mathbb{R}^n)}\leqslant C[u]_{C^{1}(\overline{\Omega})},
		\end{equation}	
		where $ C $ depends only on $ n,s,\alpha ,\Omega$.
\end{Lemma}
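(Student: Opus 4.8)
The plan is to reduce everything to the elementary fact that the zero-extension of $u$ is globally Lipschitz on $\mathbb{R}^n$, and then to a two-sided splitting of the defining integral of $(-\Delta)^su$.

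\textbf{Step 0: a global Lipschitz bound.} Since $u\in\mathcal{B}_\alpha(\Omega)$ is continuous on $\mathbb{R}^n$ and vanishes outside $\Omega$, continuity forces $u=0$ on $\partial\Omega$; combining this with $u|_\Omega\in C^{2,\alpha}(\overline\Omega)$ and the Lipschitz regularity of $\partial\Omega$ (guaranteed by $\partial\Omega\in C^{2,\alpha}$), a standard argument — estimating $|u(x)|=|u(x)-u(\bar x)|$ along a path inside $\overline\Omega$ from $x$ to a nearest boundary point $\bar x$, and using that for such domains this path length is comparable to $|x-\bar x|\leqslant\mathrm{dist}(x,\partial\Omega)$ — shows that $u$ is Lipschitz on all of $\mathbb{R}^n$ with
\[
\mathrm{Lip}(u)\leqslant C(n,\Omega)\,[u]_{C^1(\overline\Omega)}\qquad\text{and}\qquad\|u\|_{L^\infty(\mathbb{R}^n)}\leqslant C(\Omega)\,[u]_{C^1(\overline\Omega)}.
\]
Hence it suffices to bound $[(-\Delta)^su]_{C^\alpha(\mathbb{R}^n)}$ by $\mathrm{Lip}(u)$. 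The same remark also shows one may freely replace $\|u\|_{C^1(\overline\Omega)}$ by $[u]_{C^1(\overline\Omega)}$ everywhere below.

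\textbf{Step 1: a pointwise representation.} Since $s<\tfrac12$ and $u$ is bounded and Lipschitz, for every $x\in\mathbb{R}^n$ the integral
\[
(-\Delta)^su(x)=c_{n,s}\int_{\mathbb{R}^n}\frac{u(x)-u(x+z)}{|z|^{n+2s}}\,dz
\]
converges absolutely — near $z=0$ the integrand is bounded by $\mathrm{Lip}(u)\,|z|^{1-n-2s}$, which is integrable because $1-2s>0$, and near infinity it is bounded by $2\|u\|_{L^\infty(\mathbb{R}^n)}\,|z|^{-n-2s}$ — so no principal value is needed. Splitting that integrand into the same two pieces already gives $\|(-\Delta)^su\|_{L^\infty(\mathbb{R}^n)}\leqslant C(n,s,\Omega)\,[u]_{C^1(\overline\Omega)}$ (cf.\ the forthcoming Lemma~\ref{lemma: boundedness}).

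\textbf{Step 2: the H\"older seminorm.} Fix $x_1\neq x_2$ and set $h:=|x_1-x_2|$. By Step 1,
\[
(-\Delta)^su(x_1)-(-\Delta)^su(x_2)=c_{n,s}\int_{\mathbb{R}^n}\frac{\big(u(x_1)-u(x_1+z)\big)-\big(u(x_2)-u(x_2+z)\big)}{|z|^{n+2s}}\,dz,
\]
and the numerator is estimated in two complementary ways: keeping the two differences apart and using Lipschitz continuity in the increment gives $\leqslant 2\,\mathrm{Lip}(u)\,|z|$, while regrouping it as $\big(u(x_1)-u(x_2)\big)-\big(u(x_1+z)-u(x_2+z)\big)$ and using Lipschitz continuity in $x$ gives $\leqslant 2\,\mathrm{Lip}(u)\,h$. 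Thus the numerator is $\leqslant 2\,\mathrm{Lip}(u)\min\{|z|,h\}$, and splitting the resulting integral over $\{|z|\leqslant h\}$ and $\{|z|>h\}$ (each piece equal to $C(n,s)\,h^{1-2s}$, where $s<\tfrac12$ is used once more) yields
\[
\big|(-\Delta)^su(x_1)-(-\Delta)^su(x_2)\big|\leqslant C(n,s)\,\mathrm{Lip}(u)\,h^{1-2s}.
\]
For $h\leqslant 1$ the hypothesis $\alpha+2s\leqslant 1$ gives $h^{1-2s}\leqslant h^\alpha$, so the right-hand side is $\leqslant C\,[u]_{C^1(\overline\Omega)}\,h^\alpha$; for $h>1$ one instead bounds the left-hand side by $2\|(-\Delta)^su\|_{L^\infty(\mathbb{R}^n)}\leqslant 2\|(-\Delta)^su\|_{L^\infty(\mathbb{R}^n)}\,h^\alpha$ and invokes the $L^\infty$ bound of Step 1. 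Taking the supremum over $x_1\neq x_2$ proves \eqref{eq: C^alpha estimate of (0,1/2)}.

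\textbf{Main obstacle.} The computation of Step 2 is routine; the points that genuinely require care are the global Lipschitz bound of Step 0 — the only place where the regularity of $\partial\Omega$ is truly used, to guarantee that gluing the $C^1$ function on $\overline\Omega$ to the zero function outside produces a Lipschitz function — together with the rigorous justification in Step 1 of the absolutely convergent representation of $(-\Delta)^su$ for a merely Lipschitz (not $C^2$) function, which is exactly what forces the restriction $s<\tfrac12$. One should also keep track of the borderline arithmetic $\alpha+2s\leqslant 1\Leftrightarrow 1-2s\geqslant\alpha$ and of the fact that $s<\tfrac12$ is what makes $\int_{|z|\leqslant h}|z|^{1-n-2s}\,dz$ finite.
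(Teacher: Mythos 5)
Your proof is correct, but it follows a slightly different route from the paper's. The paper sets $\beta:=\alpha+2s\leqslant 1$ and estimates \emph{both} pieces of the split integral (over $B_r$ and over $\mathbb{R}^n\setminus B_r$) using the $C^{\beta}$ seminorm of $u$; with $r=|x_1-x_2|$ the two pieces then scale exactly as $[u]_{C^\beta}\,|x_1-x_2|^{\alpha}$, so no case distinction on the size of $|x_1-x_2|$ is needed and the $L^\infty$ bound on $(-\Delta)^s u$ is never invoked (indeed, in the paper that bound is the content of the \emph{subsequent} Lemma~\ref{lemma: boundedness}, which relies on the present lemma). You instead work with the Lipschitz ($C^1$) seminorm throughout, obtain the exponent $h^{1-2s}$, and then convert it to $h^{\alpha}$ by splitting into $h\leqslant 1$ (using $1-2s\geqslant\alpha$) and $h>1$ (using a self-contained $L^\infty$ bound on $(-\Delta)^s u$, so there is no circularity with Lemma~\ref{lemma: boundedness}). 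Your version is marginally less sharp in the exponent but has the merit of making explicit, in your Step 0, the fact that the zero extension of $u$ is globally Lipschitz with constant controlled by $[u]_{C^1(\overline\Omega)}$ — a point the paper uses implicitly when it applies $[u]_{C^\beta(\overline\Omega)}$ to increments at points that may lie outside $\overline\Omega$. (Two small remarks: the boundary regularity is not actually needed for that extension — for any open set, a segment argument using that $u$ vanishes on $[0,1]\setminus\{t:\,x+t(y-x)\in\Omega\}$ gives the global Lipschitz constant $\|\nabla u\|_{L^\infty(\Omega)}$ directly; and your Step 1 discussion of absolute convergence, while reassuring, is not strictly needed for the seminorm estimate itself, since only differences of the integrands enter.)
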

\begin{proof}
	Let us estimate the difference $ (-\Delta)^{s}u(x_1)-(-\Delta)^{s}u(x_2) $ for $ x_1,x_2\in\mathbb{R}^n  $. For this,  we denote  $B_r:=\left\{x\in\mathbb{R}^n:\,|x|<r\right\}$ for some $r>0$.  Note that
	\begin{equation*}
	|(-\Delta)^{s}u(x_1)-(-\Delta)^{s}u(x_2)|\leqslant A_1+A_2,
	\end{equation*}
	where 	\begin{align*}
	A_1&= \left|\int_{B_r}\frac{u(x_1)-u(x_1+z)-u(x_2)+u(x_2+z)}{|z|^{n+2s}}\,dz\right|,\\
	A_2&= \left|\int_{\mathbb{R}^n\backslash B_r}\frac{u(x_1)-u(x_1+z)-u(x_2)+u(x_2+z)}{|z|^{n+2s}}\,dz\right|.
	\end{align*}
	\textbf{Case 1.} Let $ \beta:=\alpha+2s\leqslant 1 $.
	For $ A_1 $,  we use that $ \left|u(x_i)-u(x_i+z)\right|\leqslant[u]_{C^\beta(\overline{\Omega})}|z|^\beta $ for $ i=1,2 $. Therefore,
	\begin{equation}
	\begin{split}\label{eq: A_1 (0,1/2)}
	A_1&\leqslant \int_{B_r}\frac{2[u]_{C^\beta(\overline{\Omega})}|z|^\beta}{|z|^{n+2s}}\,dz
	\leqslant C(n,s,\alpha)[u]_{C^\beta(\overline{\Omega})}r^\alpha.
	\end{split}
	\end{equation}
	For $ A_2 $, we observe that 
\[ \left|u(x_1)-u(x_2)\right|\leqslant[u]_{C^\beta(\overline{\Omega})}|x_1-x_2|^\beta, \] 
	and that  
	\[ \left|u(x_1+z)-u(x_2+z)\right|\leqslant[u]_{C^\beta(\overline{\Omega})}|x_1-x_2|^\beta. \]
 As a consequence, 
	\begin{equation}
	\begin{split}\label{eq: A_2 (0,1/2)}
	A_2&\leqslant \int_{\mathbb{R}^n\backslash B_r}\frac{2[u]_{C^\beta(\overline{\Omega})}|x_1-x_2|^\beta}{|z|^{n+2s}}\,dz
	\leqslant C(n,s)[u]_{C^\beta(\overline{\Omega})}r^{-2s}|x_1-x_2|^\beta.
	\end{split}
	\end{equation}
	Taking $ r=|x_1-x_2| $ and adding $ A_1 $ and $ A_2 $, we obtain
	\begin{equation}\label{eq: C^alpha<C^1 (0,1/2)}
	[(-\Delta)^{s} u]_{C^\alpha(\mathbb{R}^n)}\leqslant C_{n,s,\alpha}[u]_{C^\beta(\overline{\Omega})}\leqslant C_{n,s,\alpha,\Omega}[u]_{C^1(\overline{\Omega})},
	\end{equation}
	where $ C_{n,s,\alpha,\Omega}>0 $ is a suitable constant.
\end{proof}

\begin{Lemma}\label{lemma: boundedness}
		Let $ \Omega\subset\mathbb{R}^n  $ be a $ C^{2,\alpha} $ domain and $ u\in {\mathcal{B}}_\alpha(\Omega). $    Then, for every $ s\in(0,\frac{1}{2}) $ we have that $ (-\Delta)^{s} u\in L^\infty(\mathbb{R}^n) $ and
	\begin{equation}\label{eq:boundedness}
	\|(-\Delta)^{s} u\|_{L^\infty(\mathbb{R}^n)}\leqslant C\|u\|_{C^1(\overline{\Omega})},
	\end{equation}
	where $ C>0 $ depends only on $ n,s,\alpha,\Omega$.
\end{Lemma}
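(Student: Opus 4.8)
The plan is to reduce the statement to the fact that the zero-extension of $u$ is globally Lipschitz on $\mathbb R^n$, and then to exploit the restriction $s<\tfrac12$ to integrate the kernel $|z|^{-n-2s}$ against this Lipschitz function; for such $s$ the principal value in the definition of $(-\Delta)^s$ will in fact be an absolutely convergent integral, so no delicate cancellation is needed.

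\textbf{Step 1: a global Lipschitz bound.} First I would prove that $u\in C^{0,1}(\mathbb R^n)$ with $[u]_{C^{0,1}(\mathbb R^n)}\le C(\Omega)\,\|u\|_{C^1(\overline\Omega)}$. Since $\Omega$ is a bounded $C^{2,\alpha}$ (hence $C^1$) domain, it is quasiconvex, so any $x,y\in\overline\Omega$ are joined by a path in $\overline\Omega$ of length $\le C_\Omega|x-y|$; together with $u|_{\overline\Omega}\in C^1(\overline\Omega)$ this gives $|u(x)-u(y)|\le C_\Omega\|\nabla u\|_{L^\infty(\Omega)}|x-y|$. For $x\in\Omega$ and $y\in\mathbb R^n\setminus\Omega$ I would let $z$ be the first point of $\partial\Omega$ met by the segment $[x,y]$; the portion of the segment before $z$ lies in $\Omega$, and $u(z)=u(y)=0$ because $u$ is continuous and vanishes outside $\Omega$, whence $|u(x)-u(y)|=|u(x)-u(z)|\le\|\nabla u\|_{L^\infty(\Omega)}|x-z|\le\|\nabla u\|_{L^\infty(\Omega)}|x-y|$; and if $x,y\notin\Omega$ the difference vanishes. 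This is precisely the global regularity already used tacitly in the proof of Lemma~\ref{lemma: C^alpha estimate of fraction laplacian }.

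\textbf{Step 2: splitting and estimating.} Fix $x\in\mathbb R^n$. Writing $z=y-x$ and splitting at $|z|=1$,
\[
(-\Delta)^s u(x)=c_{n,s}\int_{\mathbb R^n}\frac{u(x)-u(x+z)}{|z|^{n+2s}}\,dz
= c_{n,s}\bigg(\int_{B_1}+\int_{\mathbb R^n\setminus B_1}\bigg)=:c_{n,s}(J_1+J_2).
\]
For $J_1$ I would use Step 1 to get $|u(x)-u(x+z)|\le [u]_{C^{0,1}(\mathbb R^n)}|z|$, so that
\[
|J_1|\le [u]_{C^{0,1}(\mathbb R^n)}\int_{B_1}|z|^{1-n-2s}\,dz = \frac{|\partial B_1|}{1-2s}\,[u]_{C^{0,1}(\mathbb R^n)},
\]
which is finite \emph{exactly because} $s<\tfrac12$. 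For $J_2$ I would use $|u(x)-u(x+z)|\le 2\|u\|_{L^\infty(\mathbb R^n)}$ to get $|J_2|\le 2\|u\|_{L^\infty(\mathbb R^n)}\int_{\mathbb R^n\setminus B_1}|z|^{-n-2s}\,dz = \frac{|\partial B_1|}{s}\,\|u\|_{L^\infty(\mathbb R^n)}$. Combining these bounds with $\|u\|_{L^\infty(\mathbb R^n)}\le\|u\|_{C^1(\overline\Omega)}$ and with Step~1 yields $\|(-\Delta)^s u\|_{L^\infty(\mathbb R^n)}\le C(n,s,\alpha,\Omega)\|u\|_{C^1(\overline\Omega)}$, as claimed.

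The only step that requires genuine care is Step~1: passing from the $C^1$ regularity of $u$ inside $\overline\Omega$ to a \emph{global} Lipschitz estimate with a constant governed only by the geometry of $\partial\Omega$, which is where the smoothness of the boundary enters. The hypothesis $s<\tfrac12$ is used once and only once, namely to guarantee the convergence of $\int_{B_1}|z|^{1-n-2s}\,dz$; everything else is elementary.
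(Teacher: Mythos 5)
Your proof is correct, and it takes a genuinely different route from the paper's. The paper evaluates $(-\Delta)^s u$ only at the incenter $x_0$ of $\Omega$ (so the ball $B_{d_0}(x_0)$ where the near-singular part of the integral lives is entirely inside $\Omega$, and no global extension estimate is needed), and then \emph{bootstraps} to every $x\in\overline\Omega$ by writing $|(-\Delta)^s u(x)|\le [(-\Delta)^s u]_{C^\alpha(\mathbb R^n)}|x-x_0|^\alpha+|(-\Delta)^s u(x_0)|$ and invoking Lemma~\ref{lemma: C^alpha estimate of fraction laplacian }. You instead prove a global Lipschitz bound for the zero-extension of $u$ and estimate pointwise everywhere in $\mathbb R^n$ by splitting the kernel at the fixed scale $|z|=1$; this is self-contained (it does not rely on Lemma~\ref{lemma: C^alpha estimate of fraction laplacian }), and it honestly delivers the bound on all of $\mathbb R^n$ as the lemma claims, whereas the paper's argument as written only covers $\overline\Omega$. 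Your observation that the paper's proof of Lemma~\ref{lemma: C^alpha estimate of fraction laplacian } already uses a global $C^\beta$ seminorm for the extension without comment is apt, so the global-regularity content of your Step~1 is, in any case, needed somewhere. One small simplification you could make: quasiconvexity of $\Omega$ is not required. For any $x,y\in\mathbb R^n$, restrict the zero-extension to the segment $[x,y]$; it is continuous, has derivative bounded by $\|\nabla u\|_{L^\infty(\Omega)}|x-y|$ where the segment is in $\Omega$, is identically zero where the segment is in $\mathbb R^n\setminus\overline\Omega$, and vanishes on the closed set where the segment meets $\partial\Omega$, so one checks directly (by inserting boundary hitting times, as you did for the mixed case) that it is Lipschitz on $[0,1]$ with constant $\|\nabla u\|_{L^\infty(\Omega)}|x-y|$, with no geometric constant $C_\Omega$ at all. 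Either way the dependence of $C$ on $\Omega$ in the conclusion is permitted, so your version is fine.
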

\begin{proof}
	 We choose $ x_0\in\Omega $ such that
	 \begin{equation*}
	 	d_0:=\text{dist}(x_0,\partial\Omega)=\sup\limits_{x\in{\Omega}}\text{dist}(x,\partial\Omega).
	 \end{equation*}
Then, we have that
	\begin{equation}
		\begin{split}\label{eq:(-Delta)^{s}u(x_0)}
		|(-\Delta)^{s}u(x_0)|&\leqslant\left|\int_{|z|<d_0}\frac{u(x_0+z)-u(x_0)}{|z|^{n+2s}}\,dx\right|+\left|\int_{|z|\geqslant d_0}\frac{u(x_0+z)-u(x_0)}{|z|^{n+2s}}\,dx\right|\\
		&\leqslant \int_{|z|<d_0}\frac{\int_{0}^{1}|\nabla u(x_0+tz)|\,dt}{|z|^{n+2s-1}}\, dx+ 2\|u\|_{L^\infty(\Omega)}\int_{|z|\geqslant d_0}\frac{1}{|z|^{n+2s}}\, dx\\
	 &\leqslant c_{n,s}(d_0^{1-2s}+d_0^{-2s})\|u\|_{C^1(\overline\Omega)},
		\end{split}
	\end{equation}
	where $ c_{n,s}>0 $ is another  constant independent of $ u $.
	
	Combining Lemma~\ref{lemma: C^alpha estimate of fraction laplacian } with \eqref{eq:(-Delta)^{s}u(x_0)},
	for every $ x\in\overline{\Omega} $, one has
	\begin{equation*}
		\begin{split}
		|(-\Delta)^{s}u(x)|&\leqslant[(-\Delta)^{s} u]_{C^\alpha(\mathbb{R}^n)}|x-x_0|^\alpha+|(-\Delta)^{s}u(x_0)|\\
		&\leqslant C_{n,s,\alpha}\text{  diam}(\Omega)^\alpha\|u\|_{C^{1}(\overline{\Omega})}+|(-\Delta)^{s}u(x_0)|\\
		&\leqslant C_{n,s,\alpha,\Omega}\|u\|_{C^{1}(\overline{\Omega})}+c_{n,s}(d_0^{1-2s}+d_0^{-2s})\|u\|_{C^1(\overline\Omega)}.
		\end{split}
	\end{equation*}
This implies the desired result~\eqref{eq:boundedness}.
\end{proof}
	\begin{proof}[Proof of Lemma~\ref{lemma C^alpha estimate }]
In the light of  Lemmata~\ref{lemma: C^alpha estimate of fraction laplacian } and~\ref{lemma: boundedness}, we finally obtain that
	\begin{equation*}
		\begin{split}
	\|(-\Delta)^{s}u\|_{C^{\alpha}(\overline{\Omega})}&=\|(-\Delta)^{s}u\|_{L^\infty(\overline{\Omega})}+[(-\Delta)^{s} u]_{C^\alpha(\overline{\Omega})}
	\leqslant C_{n,s,\Omega,\alpha}\|u\|_{C^{1}(\overline{\Omega})},
		\end{split}
	\end{equation*}
for some suitable $ C_{n,s,\Omega,\alpha}>0 $.
\end{proof}

	\subsubsection{Proof of the $ C^{2,\alpha} $-regularity up to the boundary}\label{subsubsection 4.22}
	
	First of all, we establish the following facts:
	\begin{Lemma}\label{lemma facts relating to method of continutiy}
		 For every $ t\in[0,1] $, we set 
		\begin{equation*}
			\mathcal{L}_t=(1-t)(-\Delta)+t\mathcal{L}=-\Delta+t(-\Delta)^s.
		\end{equation*}
	Then, we have that
		\begin{itemize}
			\item[(1)] $ \mathcal{L}_t u\in C^\alpha(\overline{\Omega}) $ for every $ u\in {\mathcal{B}}_\alpha(\Omega) $;
			\item[(2)] there exists a constant $ C=C(n,s,\alpha,\Omega)>0 $ such that
			\begin{equation}\label{eq:C^2 estimate of u }
				\|u\|_{C^{2,\alpha}(\overline{\Omega})}\leqslant C\left(\|\mathcal{L}_tu\|_{C^{\alpha}(\overline{\Omega})}+\sup\limits_{{\Omega}}|u|\right)\qquad \forall u\in {\mathcal{B}}_\alpha(\Omega).
			\end{equation}
		\end{itemize}
			\end{Lemma}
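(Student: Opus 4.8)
The plan is to treat $\mathcal{L}_t$ as a bounded perturbation of the Dirichlet Laplacian, for which the classical Schauder theory is available, while keeping track of the fact that all constants can be chosen independently of $t\in[0,1]$ --- this uniformity is exactly what will be needed for the method of continuity later in this subsection.

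\emph{Part (1).} This is immediate: if $u\in\mathcal{B}_\alpha(\Omega)$, then $u|_\Omega\in C^{2,\alpha}(\overline\Omega)$, so $\Delta u\in C^\alpha(\overline\Omega)$; moreover, since $s\in(0,\tfrac{1}{2})$ and $\alpha+2s\leqslant1$, Lemma~\ref{lemma C^alpha estimate} gives $(-\Delta)^su\in C^\alpha(\overline\Omega)$. Hence $\mathcal{L}_tu=-\Delta u+t(-\Delta)^su\in C^\alpha(\overline\Omega)$.

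\emph{Part (2).} First I would note that elements of $\mathcal{B}_\alpha(\Omega)$, being continuous on $\mathbb{R}^n$ and vanishing on $\mathbb{R}^n\setminus\Omega$, vanish on $\partial\Omega$; since $\partial\Omega$ is of class $C^{2,\alpha}$, the global Schauder estimate for the homogeneous Dirichlet problem for $-\Delta$ (see, e.g., \cite[Theorem~6.6]{GTbook}) furnishes a constant $C_0=C_0(n,\alpha,\Omega)$, independent of $u$, such that
\[
\|u\|_{C^{2,\alpha}(\overline\Omega)}\leqslant C_0\Big(\|\Delta u\|_{C^\alpha(\overline\Omega)}+\sup_\Omega|u|\Big)\qquad\text{for all }u\in\mathcal{B}_\alpha(\Omega).
\]
Writing $\Delta u=-\mathcal{L}_tu+t(-\Delta)^su$ and using $0\leqslant t\leqslant1$ together with Lemma~\ref{lemma C^alpha estimate}, this becomes
\[
\|u\|_{C^{2,\alpha}(\overline\Omega)}\leqslant C_0\|\mathcal{L}_tu\|_{C^\alpha(\overline\Omega)}+C_1\|u\|_{C^1(\overline\Omega)}+C_0\sup_\Omega|u|,
\]
with $C_1=C_1(n,s,\alpha,\Omega)$ not depending on $t$. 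It then remains to absorb the lower-order term $\|u\|_{C^1(\overline\Omega)}$: by the standard interpolation inequality in Hölder spaces on the bounded $C^{2,\alpha}$ domain $\Omega$, for every $\varepsilon>0$ there is $C_\varepsilon>0$ with $\|u\|_{C^1(\overline\Omega)}\leqslant\varepsilon\|u\|_{C^{2,\alpha}(\overline\Omega)}+C_\varepsilon\sup_\Omega|u|$. Choosing $\varepsilon$ so that $C_1\varepsilon\leqslant\tfrac{1}{2}$ and reabsorbing $\tfrac12\|u\|_{C^{2,\alpha}(\overline\Omega)}$ into the left-hand side yields \eqref{eq:C^2 estimate of u } with a constant depending only on $n,s,\alpha,\Omega$.

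I do not expect a genuine obstacle here: the argument is purely a perturbation-of-the-Laplacian bootstrap, and Lemma~\ref{lemma C^alpha estimate} is precisely what guarantees that the nonlocal contribution $t(-\Delta)^su$ is controlled by $\|u\|_{C^1(\overline\Omega)}$, hence is genuinely of lower order relative to $\|u\|_{C^{2,\alpha}(\overline\Omega)}$ (this is where $s<\tfrac12$ and $\alpha+2s\leqslant1$ enter). The only point worth stating carefully is the $t$-independence of $C_0$ and $C_1$, which holds because the principal part of $\mathcal{L}_t$ is the fixed operator $-\Delta$ while the perturbation carries the factor $t\in[0,1]$.
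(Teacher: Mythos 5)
Your proof is correct and follows essentially the same route as the paper: apply the global Schauder estimate for the Dirichlet Laplacian, write $\Delta u = -\mathcal{L}_t u + t(-\Delta)^s u$, control $(-\Delta)^s u$ in $C^\alpha$ by $\|u\|_{C^1(\overline\Omega)}$ via Lemma~\ref{lemma C^alpha estimate }, and absorb that lower-order term by interpolation. The only cosmetic difference is the citation (the paper invokes \cite[Theorem~6.14]{GTbook} where you invoke \cite[Theorem~6.6]{GTbook}), and you make the $t$-uniformity of the constants slightly more explicit than the paper does, which is a helpful remark but not a departure from the argument.
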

		\begin{proof}
	As regards (1), it follows directly from Lemma~\ref{lemma C^alpha estimate } and the fact that $ \Delta u\in C^\alpha(\overline{\Omega}) $  if $ u\in {\mathcal{B}}_\alpha(\Omega) $.  We now turn to prove \eqref{eq:C^2 estimate of u }.
	
	To this end, we observe that $ \partial\Omega $ is of class $ C^{2,\alpha} $, which allows us to apply \cite[Theorem 6.14]{GTbook}: there exists a constant $ C=C(n,\alpha)>0 $ such that
	\begin{equation}\label{eq:Laplacian estimate of C^2}
		\|u\|_{C^{2,\alpha}(\overline{\Omega})}\leqslant C\left(\|\Delta u\|_{C^{\alpha}(\overline{\Omega})}+\sup\limits_{{\Omega}}|u|\right),
	\end{equation}
	for every $ u\in C^{2,\alpha}(\overline{\Omega}) $ satisfying $ u\equiv 0 $ in $ \mathbb{R}^n\backslash\Omega. $	 
	
	Thus, combining Lemma~\ref{lemma C^alpha estimate } and \eqref{eq:Laplacian estimate of C^2},  for every $ u\in {\mathcal{B}}_\alpha(\Omega) $ one has
\begin{equation}
		\begin{split}\label{eq:Laplacian estimate of C^2 alpha}
		\|u\|_{C^{2,\alpha}(\overline{\Omega})}&\leqslant C\left(\|\Delta u\|_{C^{\alpha}(\overline{\Omega})}+\sup\limits_{{\Omega}}|u|\right)
		=C\left(\|\mathcal{L}_tu-t(-\Delta)^s u\|_{C^{\alpha}(\overline{\Omega})}+\sup\limits_{{\Omega}}|u|\right)\\
		&\leqslant C\left(\|\mathcal{L}_tu\|_{C^{\alpha}(\overline{\Omega})}+\|(-\Delta)^s u\|_{C^{\alpha}(\overline{\Omega})}+\sup\limits_{{\Omega}}|u|\right)\\
		&\leqslant \bar{C}\left(\|\mathcal{L}_tu\|_{C^{\alpha}(\overline{\Omega})}+\| u\|_{C^{1}(\overline{\Omega})}+\sup\limits_{{\Omega}}|u|\right).
		\end{split}
	\end{equation}
		Owing again to the regularity of $ \partial\Omega $, we can utilize the global interpolation inequality (see
		e.g.~\cite[Chapter 6]{GTbook}): there exists a constant $ \theta=\theta(\bar{C})>0 $ such that
		\begin{equation*}
		\| u\|_{C^{1}(\overline{\Omega})}\leqslant \frac{1}{2\bar{C}}\| u\|_{C^{2,\alpha}(\overline{\Omega})}+\theta \sup\limits_{{\Omega}}|u|.
		\end{equation*}
		As a consequence of this and \eqref{eq:Laplacian estimate of C^2 alpha}, we obtain the estimate in~\eqref{eq:C^2 estimate of u }, as desired.
		\end{proof}

			We now use the method of continuity (See e.g. \cite[Theorem 5.2]{GTbook}) to complete the proof.
			
			We first notice that $ {\mathcal{B}}_\alpha(\Omega) $ is  endowed with a structure of Banach space by the norm
			\begin{equation*}
				\| u\|_{{\mathcal{B}}_\alpha(\Omega)}:=\| u\|_{C^{2,\alpha}(\overline{\Omega})}\qquad \forall u\in {\mathcal{B}}_\alpha(\Omega).
			\end{equation*}
				Furthermore, using~\cite[Theorem 4.7]{BDVV22b}, it is not difficult to see that
			\begin{equation}\label{eq: sup u in Omega}
				\sup\limits_{{\Omega}}|u|=\sup\limits_{\mathbb{R}^n}|u|\leqslant\rho \|\mathcal{L}_t u\|_{L^\infty(\Omega)}\leqslant\rho\|\mathcal{L}_t u\|_{C^{\alpha}(\overline{\Omega})},
			\end{equation}
			where $ \rho $ is an another constant independent of $ u $ and $ t $.

 In view of Lemma~\ref{lemma facts relating to method of continutiy} and \eqref{eq: sup u in Omega}, we can apply the method of continuity in the following settings:
\begin{itemize}
	\item[(i)] $ {\mathcal{B}}_\alpha(\Omega) $ and $ C^\alpha(\Omega) $ are Banach space;
	\item[(ii)] $ \mathcal{L}_0 $ and $ \mathcal{L}_1 $ are linear and bounded from $ {\mathcal{B}}_\alpha(\Omega) $ to $ C^\alpha(\overline\Omega) $;
	\item[(iii)] There exists a constant $ \hat{C}>0 $ such that
	\begin{equation*}
		 \| u\|_{{\mathcal{B}}_\alpha(\Omega)}\leqslant \hat{C}\|\mathcal{L}_t u\|_{C^{\alpha}(\overline{\Omega})}\qquad \forall u\in {\mathcal{B}}_\alpha(\Omega) \text{ and } t\in[0,1].
	\end{equation*}
\end{itemize}
 Since $  \mathcal{L}_0=-\Delta $ is surjective, one can deduce that $ \mathcal{L}_1=\mathcal{L} $ is  also surjective: for every $ f\in C^\alpha(\overline{\Omega}) $ there exists a unique $ u\in {\mathcal{B}}_\alpha(\Omega) $ such that
 \begin{equation}
 	\mathcal{L}u=f \qquad \text{ a.e. in  } \Omega.
 \end{equation}
	We  observe that $ u\in C^{2,\alpha}(\overline{\Omega}) $ and $ u\equiv0$ in $ \mathbb{R}^n\backslash \Omega $.
	
\begin{proof}[Proof of Theorem~\ref{theorem C^2 alpha global}]	
	Let $ u\in X_0^1 $ be the weak solution of \eqref{Maineq}. Theorem~\ref{th:C^{1,alpha}} implies that the map~$ x\mapsto g(x,u(x))$ lies in~$ C^\alpha(\overline{\Omega})$. 
	
	Moreover, there exists unique solution  $ v\in {\mathcal{B}}_\alpha(\Omega)\subset X_0^1.  $ In addition, the Lax-Milgram Theorem yields that $u=v$, and in particular,  by combining Theorem~\ref{th:C^{1,alpha}} with
	\eqref{eq:C^2 estimate of u }, we deduce that 
\begin{equation}
		\begin{split}
				\|u\|_{C^{2,\alpha}(\overline{\Omega})}&\leqslant C\left(\|\mathcal{L}u\|_{C^{\alpha}(\overline{\Omega})}+\sup\limits_{{\Omega}}|u|\right)
				\leqslant C\left(\|g(\cdot,u(\cdot))\|_{C^{\alpha}(\overline{\Omega})}+\sup\limits_{{\Omega}}|u|\right)\\
				&\leqslant C \left([g]_{C^\alpha(\overline{\Omega}\times{\rm I_u})}\left(1+\|Du\|_{L^\infty(\Omega)}\right)+\|g\|_{L^\infty(\overline\Omega\times {\rm I_u})}+\|u\|_{L^\infty(\Omega)}\right)\\
				&\leqslant C \left(\|g\|_{C^\alpha(\overline{\Omega}\times{\rm I_u})}\left(1+\|u\|_{C^1(\overline\Omega)}\right)+\|u\|_{L^\infty(\Omega)}\right)\\
				&\leqslant C \left(\|g\|_{C^\alpha(\overline{\Omega}\times{\rm I_u})}\|u\|^{2^*}_{L^\infty(\Omega)}+\|g\|_{C^\alpha(\overline{\Omega}\times{\rm I_u})}+\|u\|_{L^\infty(\Omega)}\right)
			\end{split}
	\end{equation}
	for some constant $C$ depending only on $n,s,c,\alpha,\Omega$.
	Hence, the proof of Theorem~\ref{theorem C^2 alpha global} is completed.
\end{proof}
	
	{
	\begin{appendix}
	
	\section{The strong maximum principle}\setcounter{equation}{0}
	
	We collect here some auxiliary results which follow as a simple byproduct of our main theorems.
	These results are not stated in their full generality, but rather in a way which makes them directly utilizable in nonlinear analysis problems (in particular, we will use them in the present form in the article~\cite{SVWZ}).

First, we present a strong maximum principle (see~\cite{BDVV22b} for a weak maximum principle).

	\begin{Theorem}[Strong maximum principle]\label{coro: solution u>0}
	Let 
	$\Omega$ be a bounded $C^{1,1}$ domain, 
	$ u$  be   a weak solution of \eqref{Maineq}.
	Assume that
	 $ g\in C^\alpha_{\rm loc}(\Omega\times\mathbb{R}) $  satisfies \eqref{eq: H 1} for any given  $\alpha\in(0,1)$
		and   $ g\geqslant 0 $.
		
		Then, $u\in C^{2, \alpha}_{\rm loc}(\Omega)\cap C(\mathbb{R}^n)  $ for any $\alpha\in(0,1)$. Furthermore,  we have that if 
		$u\not\equiv 0 $ in $ \mathbb{R}^n, $
		 then
		$ u>0$ in $ \Omega. $
	\end{Theorem}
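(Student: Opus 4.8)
The plan is threefold: first, to obtain the regularity assertion $u\in C^{2,\alpha}_{\rm loc}(\Omega)\cap C(\mathbb{R}^n)$ for free from Theorem~\ref{th C^2,alpha interior}, whose hypotheses coincide with the ones assumed here; second, to deduce $u\geqslant 0$ from the weak maximum principle of \cite{BDVV22b}; and third, to exclude interior zeros of $u$ by a pointwise inspection of the equation which exploits the definite sign of the nonlocal term.

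For the first step I would simply invoke Theorem~\ref{th C^2,alpha interior}: since $\partial\Omega$ is of class $C^{1,1}$ and $g\in C^\alpha_{\rm loc}(\Omega\times\mathbb{R})$ satisfies \eqref{eq: H 1}, that theorem yields $u\in C^{2,\alpha}_{\rm loc}(\Omega)\cap C(\mathbb{R}^n)$ for every $\alpha\in(0,1)$. I would also record at this stage that, because $u$ is $C^{2,\alpha}$ near every fixed $x\in\Omega$, is globally bounded (Theorem~\ref{th: regularity}) and vanishes outside $\Omega$, the quantity $(-\Delta)^su(x)$ is well defined as an absolutely convergent integral, so that the weak formulation upgrades to the pointwise identity $-\Delta u+(-\Delta)^su=g(x,u)$ at every point of $\Omega$.

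For the remaining steps, from $g\geqslant 0$ we have $\mathcal{L}u=g(x,u)\geqslant 0$ in $\Omega$ in the weak sense and $u\equiv 0$ in $\mathbb{R}^n\setminus\Omega$, so the weak maximum principle for $\mathcal{L}$ from \cite{BDVV22b} gives $u\geqslant 0$ in $\mathbb{R}^n$. Now suppose, for contradiction, that $u\not\equiv 0$ but $u(x_0)=0$ for some $x_0\in\Omega$. Then $x_0$ is an interior minimum of $u$, hence $-\Delta u(x_0)\leqslant 0$; moreover, using $u(x_0)=0$ together with $u\geqslant 0$,
\[
(-\Delta)^su(x_0)=-c_{n,s}\int_{\mathbb{R}^n}\frac{u(y)}{|x_0-y|^{n+2s}}\,dy\leqslant 0,
\]
with equality only if $u\equiv 0$ a.e.\ (the integral converging absolutely thanks to the $C^2$ vanishing of $u$ at $x_0$). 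Evaluating the equation at $x_0$ gives
\[
0\leqslant g(x_0,0)=-\Delta u(x_0)+(-\Delta)^su(x_0)\leqslant (-\Delta)^su(x_0)\leqslant 0,
\]
so $(-\Delta)^su(x_0)=0$, which forces $u\equiv 0$ a.e.\ in $\mathbb{R}^n$ and, by continuity, $u\equiv 0$, contradicting $u\not\equiv 0$. Therefore $u>0$ in $\Omega$.

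I expect the only genuinely delicate points to be the justification that the weak identity may be read pointwise at interior points — handled by the interior $C^{2,\alpha}$ regularity together with the global $L^\infty$ bound and the exterior vanishing of $u$ — and the absolute convergence of $(-\Delta)^su(x_0)$ without the principal value, which relies on $u(x_0)=0$ and the $C^2$-smoothness of $u$ near $x_0$; the rest is a short sign chase.
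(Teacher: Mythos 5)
Your proposal is correct and follows essentially the same strategy as the paper: upgrade to interior $C^{2,\alpha}$ regularity via Theorem~\ref{th C^2,alpha interior}, evaluate the equation pointwise at an interior minimum, and run a sign chase using $-\Delta u(x_0)\leqslant 0$ together with the nonpositivity of the nonlocal term at a global minimum. The only difference is that you first invoke the weak maximum principle of \cite{BDVV22b} to get $u\geqslant 0$ before locating a zero; the paper skips this step by observing that any minimum point $x_0\in\Omega$ with $u(x_0)\leqslant 0$ is automatically a global minimum over $\mathbb{R}^n$ (since $u\equiv 0$ outside $\Omega$), so that $\int_{\mathbb{R}^n}\frac{u(x_0)-u(y)}{|x_0-y|^{n+2s}}\,dy\leqslant 0$ directly — a marginally more self-contained route, but both are sound. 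Your remarks on the absolute convergence of $(-\Delta)^s u(x_0)$ at the vanishing minimum are a welcome precision that the paper leaves implicit.
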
	 
		\begin{proof}
		
		Since $ u $ is a nontrivial weak solution of \eqref{Maineq}, utilizing Theorems~\ref{th: regularity} and \ref{th C^2,alpha interior}, one deduces that
		\begin{equation*}
			u\in C^{2,\alpha}_{\rm loc}(\Omega)\cap C(\mathbb{R}^n).
		\end{equation*}
		Moreover, due to $ g(x,u)\geqslant 0 $,
		\begin{equation*}
			-\Delta u+(-\Delta)^su\geqslant 0, \text{ a.e. in } \Omega \text{ and }  u\equiv 0  \text{ in } \mathbb{R}^n\backslash \Omega.
		\end{equation*}
		 If 
			$u\not\equiv 0 $ in $ \mathbb{R}^n $, suppose by contradiction that there exists a point $ \eta\in\Omega $ such that  $ u(\eta)\leqslant0 $. Since $ u\in C(\mathbb{R}^n) $ and $ \overline{\Omega} $ is compact,  one can find  $ x_0\in\overline\Omega $ such that
		$$ u(x_0)=\min\limits_{\overline\Omega} u(x)\leqslant u(\eta)\leqslant 0. $$
		We notice that $x_0\in \Omega$ because $u$ is continuous and $ u=0 $ in $ \mathbb{R}\backslash\Omega$.  
		
		Thus, $ \Delta u(x_0)\geqslant 0 $ and so
		\begin{equation}\label{eq: contradiction for u }
			\begin{split}
				0\leqslant \big(-\Delta +(-\Delta)^s \big) u(x_0) &=-\Delta u(x_0)+ \int_{\mathbb{R}^{n}}\frac{u(x_0)-u(y)}{|x_0-y|^{n+2s}}\, dy\\
				&\leqslant \int_{\mathbb{R}^{n}}\frac{u(x_0)-u(y)}{|x_0-y|^{n+2s}}\, dy \leqslant 0.
			\end{split}
		\end{equation}Here, the last inequality holds due to the fact that $ u(x_0)\leqslant u(x)$ for all $ x\in\mathbb{R}^n$.
		Hence, we obtain
		that $ u\equiv 0 $ throughout $ \mathbb{R}^n $, which
		is a contradiction.
%
\end{proof}

\section{The principal eigenvalue problem}

Now we give a characterization
of the principal eigenvalue and eigenfunction. 

\begin{Theorem}[Variational properties for the principal eigenvalue]\label{th: eigenvalue problem}		
Consider the eigenvalue problem 	 	\begin{equation}\label{eq:eigenvalue problem}
	 	\begin{cases}
	 		-\Delta u+(-\Delta)^s u=\lambda u\quad &\text{in } \Omega,\\
	 		u=0&\text{in }\mathbb{R}^n\backslash \Omega.
	 	\end{cases}
	 \end{equation}

Then,	
		\begin{itemize}
	\item [(a)] We have 
		\begin{equation*}\label{definition of eigenvalue}
	\lambda_1:=\min\left\{\|u\|^2_{X_0^1}:\,  u\in X^1_0,\, \|u\|_{L^2(\Omega)}=1 \right\}>0.
			\end{equation*}
		\item [(b)] $ \lambda_1 $ is simple.
			\item[(c)]  Any first eigenfunction $\varphi_1$ is of $C^{2,\alpha}_{\rm loc}(\Omega)$ class  for any $\alpha\in(0,1)$.
				\item[(d)]   Either $\varphi_1\geqslant 0$ or $\varphi_1\leqslant 0$.
		\end{itemize}	
		Furthermore, if $\Omega$  is a $C^{1,1}$ domain, then
	\begin{itemize}
		\item[(e)]  $\varphi_1\in C^{2,\alpha}_{\rm loc}(\Omega)\cap C^{1,\alpha}(\overline \Omega)$  for any $\alpha\in(0,1)$.
			\item[(f)]  $\varphi_1=0$ in $\mathbb{R}^n\backslash\Omega$, and either $\varphi_1>0$ in $\Omega$ or $\varphi_1<0$ in $\Omega$.
				\end{itemize}  
	\end{Theorem}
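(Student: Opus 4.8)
The plan is to treat $\lambda_1$ as the infimum of the Rayleigh quotient associated with the bilinear form $\langle\cdot,\cdot\rangle_{X_0^1}$, produce a minimizer by the direct method, upgrade its regularity using the theorems already established in this paper, and then extract the sign and the simplicity from a decomposition of the $X_0^1$-energy together with an interior strong maximum principle. It is convenient to prove the statements in the order (a), then (c) and (e), then (d), and finally (b) and (f), since the later properties use the regularity and sign information obtained before.

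For (a) I would argue as in the classical case: a minimizing sequence $(u_k)$ with $\|u_k\|_{L^2(\Omega)}=1$ is bounded in $X_0^1$, hence, up to a subsequence, $u_k\rightharpoonup\varphi_1$ in $X_0^1$; by Lemma~\ref{lemma equivalence of norms} the $X_0^1$-norm is equivalent to the $H_0^1(\Omega)$-norm, so the Rellich--Kondrachov Theorem gives $u_k\to\varphi_1$ in $L^2(\Omega)$ and thus $\|\varphi_1\|_{L^2(\Omega)}=1$; weak lower semicontinuity of $\|\cdot\|_{X_0^1}^2$ gives that $\varphi_1$ realizes the infimum. The positivity $\lambda_1>0$ is immediate because $\|\cdot\|_{X_0^1}$ is a genuine norm (an element of $X_0^1$ with vanishing $X_0^1$-norm has zero gradient and zero trace, hence is $0$). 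Differentiating $t\mapsto\|\varphi_1+t\psi\|_{X_0^1}^2$ along the constraint at $t=0$ produces $\langle\varphi_1,\psi\rangle_{X_0^1}=\lambda_1\int_\Omega\varphi_1\psi\,dx$ for every $\psi\in X_0^1$, i.e. $\varphi_1$ is a weak solution of~\eqref{eq:eigenvalue problem}; conversely, testing the equation against the eigenfunction itself shows that every eigenfunction associated with $\lambda_1$ is a minimizer, so ``first eigenfunction'' and ``minimizer'' coincide.

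For (c) and (e) I would note that a first eigenfunction solves~\eqref{Maineq} with $g(x,t)=\lambda_1 t$, which satisfies~\eqref{eq: H 1} with $q=2$ and is smooth; hence Theorem~\ref{th: regularity} gives $\varphi_1\in L^\infty(\Omega)$, and then Theorem~\ref{th C^2,alpha interior without boundary condition} gives $\varphi_1\in C^{2,\alpha}_{\rm loc}(\Omega)$ for every $\alpha\in(0,1)$, which is (c); when $\partial\Omega$ is $C^{1,1}$, Theorem~\ref{th:C^{1,alpha}} additionally gives $\varphi_1\in C^{1,\alpha}(\overline\Omega)$, and since $\varphi_1\in H_0^1(\Omega)$ has zero trace its extension by $0$ outside $\Omega$ is continuous on $\mathbb{R}^n$, giving (e). For (d), writing $\varphi=\varphi^+-\varphi^-$ for a first eigenfunction $\varphi$, I would use that $\varphi^\pm\in X_0^1$, that $\nabla\varphi^+\cdot\nabla\varphi^-=0$ a.e., and that $\varphi^+\varphi^-=0$ a.e., to obtain the identity
\[
\|\varphi\|_{X_0^1}^2=\|\varphi^+\|_{X_0^1}^2+\|\varphi^-\|_{X_0^1}^2+2\int\!\!\!\int\nolimits_{\mathbb{R}^{2n}}\frac{\varphi^+(x)\varphi^-(y)+\varphi^+(y)\varphi^-(x)}{|x-y|^{n+2s}}\,dxdy,
\]
whose last term is nonnegative and strictly positive unless $\varphi^+\equiv0$ or $\varphi^-\equiv0$. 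Since $\|\varphi\|_{L^2(\Omega)}^2=\|\varphi^+\|_{L^2(\Omega)}^2+\|\varphi^-\|_{L^2(\Omega)}^2$ and $\|\varphi^\pm\|_{X_0^1}^2\geqslant\lambda_1\|\varphi^\pm\|_{L^2(\Omega)}^2$ by the variational characterization, having both $\varphi^\pm$ nontrivial would force $\lambda_1=\|\varphi\|_{X_0^1}^2>\lambda_1$, a contradiction; hence $\varphi\geqslant0$ or $\varphi\leqslant0$.

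Finally, for (b) and (f) I would first show that every first eigenfunction $w$ is nowhere zero in $\Omega$: by (d) we may take $w\geqslant0$ on $\mathbb{R}^n$ (recall $w=0$ outside $\Omega$), and if $w(\eta)=0$ at some $\eta\in\Omega$ then $\eta$ is a global minimum, so $-\Delta w(\eta)\geqslant0$ while $(-\Delta)^s w(\eta)=-c_{n,s}\int_{\mathbb{R}^n}\frac{w(y)}{|\eta-y|^{n+2s}}\,dy\leqslant0$; since these sum to $\lambda_1 w(\eta)=0$, both vanish, forcing $w\equiv0$, which is impossible. This yields the strict-sign claim in (f) (the identity $\varphi_1=0$ outside $\Omega$ being built into the definition of $X_0^1$; alternatively one may invoke Theorem~\ref{coro: solution u>0} once $\partial\Omega$ is $C^{1,1}$). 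For simplicity, given two linearly independent first eigenfunctions $u,v$ and any $x_0\in\Omega$ (where $u(x_0)\neq0\neq v(x_0)$ by the previous step), the function $w:=u(x_0)v-v(x_0)u$ satisfies $\mathcal{L}w=\lambda_1 w$ and $w(x_0)=0$; since a first eigenfunction cannot vanish inside $\Omega$, we get $w\equiv0$, so $v$ is a scalar multiple of $u$, contradicting independence, and thus $\lambda_1$ is simple. The delicate point in this program is (b): one must make sure that only the \emph{interior} strong maximum principle is used (so that no hypothesis on $\partial\Omega$ enters (b)--(d)) and that the energy splitting above, and in particular the sign of the nonlocal cross term, is handled correctly, since this is exactly the feature of the mixed operator that makes the Rayleigh quotient strictly decrease under $\varphi\mapsto|\varphi|$ when $\varphi$ genuinely changes sign.
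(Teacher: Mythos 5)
Your proposal is correct in substance and follows the paper's architecture for (a), (c), (e) (direct method plus the regularity theorems), but it takes a genuinely different route for the sign property (d) and especially for simplicity (b). For (d), the paper compares $I(\varphi_1)$ with $I(|\varphi_1|)$ via the pointwise strict inequality $\bigl||\varphi_1(x_1)|-|\varphi_1(x_2)|\bigr|<|\varphi_1(x_1)-\varphi_1(x_2)|$ on pairs of points with opposite signs; your decomposition $\|\varphi\|_{X_0^1}^2=\|\varphi^+\|_{X_0^1}^2+\|\varphi^-\|_{X_0^1}^2+(\text{nonnegative cross term})$ is the same computation written additively, and both hinge on the strict positivity of the nonlocal cross term when $\varphi$ genuinely changes sign; your version has the small advantage of making explicit where $\|\varphi^\pm\|_{X_0^1}^2\geqslant\lambda_1\|\varphi^\pm\|_{L^2}^2$ is used. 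For (b), the paper forms $u_1=\varphi_1-\bar\varphi_2$, notes it is again a first eigenfunction, applies (d) to order $\varphi_1$ and $\bar\varphi_2$ pointwise, and concludes from the equal $L^2$ normalizations; you instead first prove that a nonnegative first eigenfunction cannot vanish inside $\Omega$ and then run a Wronskian-type argument with $w:=u(x_0)v-v(x_0)u$. Both are valid; yours requires the interior $C^{2,\alpha}$ regularity of (c) before (b) (which you correctly arrange), while the paper's is a purely $L^2$-level argument. As a bonus, your argument shows the strict sign in $\Omega$ without any boundary regularity, whereas the paper only records it under the $C^{1,1}$ hypothesis in (f).

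One slip to fix: in the nowhere-vanishing step you write that at the interior global minimum $\eta$ one has $-\Delta w(\eta)\geqslant0$. At a minimum the Hessian is positive semidefinite, so $\Delta w(\eta)\geqslant0$, i.e.\ $-\Delta w(\eta)\leqslant0$. With the sign as you wrote it, the two terms $-\Delta w(\eta)$ and $(-\Delta)^sw(\eta)$ have opposite signs and their vanishing cannot be inferred from their sum being zero. With the corrected sign both terms are nonpositive and sum to $\lambda_1w(\eta)=0$, so both vanish, and $(-\Delta)^sw(\eta)=0$ forces $w\equiv0$ as you intend; this is exactly the computation in the paper's proof of Theorem~\ref{coro: solution u>0}.
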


	\begin{proof}
		As for \textbf{(a)} and \textbf{{(d)}}, we set
		\begin{equation*}
			I(u):=\|u\|^2_{X_0^1}, \qquad  \mathcal{S}= \left\{u\in X_0^1, \|u\|_{L^2(\Omega)}=1  \right\}.
		\end{equation*}
		Let  $ \left\{u_j\right\}\subset \mathcal{S} $ be a minimizing sequence for $ I $, that is 
		\begin{equation*}
			\lim\limits_{j \rightarrow \infty}I{(u_j)}=\mathop{\text{inf }}\limits_{\mathcal{S} } \|u\|^2_{X_0^1}\geqslant 0.
		\end{equation*}
		Then $ \left\{u_j\right\} $ is bounded in $ X_0^1 $. Up to a subsequence, still defined by $ \left\{u_j\right\} $, there exists $ u^*\in X_0^1 $ such that 
		\begin{equation}
			\begin{split}
				u_j\rightharpoonup u^* &\text{ in }   X_0^1;\\
				u_j\rightarrow u^*& \text{ in } L^2(\Omega).
			\end{split}
		\end{equation}
		Since $ \|u_j\|_{L^2(\Omega)}=1 $, we know that  $ \|u^*\|_{L^2(\Omega)}=1  $ and  $ u^*\in \mathcal{S} $.  According to the weak lower semi-continuity of the norm, one has that
		\begin{equation}
			\lim\limits_{j \rightarrow \infty}I{(u_j)}\geqslant I(u^*)\geqslant \mathop{\text{inf }}\limits_{\mathcal{S} } \|u\|^2_{X_0^1},
		\end{equation}
	which  implies  that $ \lambda_1= \mathop{\text{inf }}\limits_{\mathcal{S} } \|u\|^2_{X_0^1}=I(u^*).$ 
		As a consequence of this,  the function $u^*$ can be chosen as
		 the first eigenfunction.

		In order to  prove that~$ \varphi_1\geqslant0 $ a.e. in $ \Omega $, we first need to show that if $ \varphi_1 $ is an eigenfunction related to $ \lambda_1 $, 
		then both $ \varphi_1 $ and $ |\varphi_1| $ realize the minimum in the definition of $\lambda_1$.
		
		Indeed, let  $ x_1\in\left\{x\in\mathbb{R}^n,\varphi_1 >0\right\} $ and $ x_2\in\left\{x\in\mathbb{R}^n,\varphi_1 <0\right\} $. Then,
		\begin{equation}\label{eq:eigen}
			\left||\varphi_1(x_1)|-|\varphi_1(x_2)|\right|=|\varphi_1(x_1)+\varphi_1(x_2)|<\varphi_1(x_1)-\varphi_1(x_2)=|\varphi_1(x_1)-\varphi_1(x_2)|.
		\end{equation}
		If both the sets $ \left\{x\in\mathbb{R}^n,\varphi_1 >0\right\} $ and $ \left\{x\in\mathbb{R}^n,\varphi_1 <0\right\} $  have positive measure, exploiting  \eqref{eq:eigen},
		we get $ I(|\varphi_1|)<I(\varphi_1)=\lambda_1 $, whence, since $ |\varphi_1|\in\mathcal{S} $, we have that~$ I(|\varphi_1|)=I(\varphi_1)=\lambda_1 $, which implies that $ \varphi_1\geqslant0  $ or $ \varphi_1\leqslant0 $ a.e. in $ \Omega $.
		
		\bigskip
		
		Next, we turn to prove \textbf{(b)}.  Suppose by contradiction that there exists another eigenfunction $ \varphi_2\in X_0^1, $ $ \varphi_1\neq \varphi_2 $, corresponding to $ \lambda_1 $.  We may assume that $\varphi_2\geqslant 0$ a.e. in $\Omega$ since it was already
			proved that all the first eigenfunctions have a sign.
			Setting $ \bar{\varphi}_2=\frac{\varphi_2}{\|\varphi_2\|_{L^2(\Omega)}} $ and $ u_1=\varphi_1-\bar{\varphi}_2 $.
		
		We claim that $ u_1=0 $ a.e. in $ \Omega $. Indeed, we observe  that $ u_1 $ is a solution of Euler-Lagrange equation~\eqref{eq:eigenvalue problem}, thus $u_1$ is also
			 the eigenfunction related to $ \lambda_1 $,  so we deduce $ u_1\leqslant0 $ or $ u_1\geqslant 0 $ a.e. in $ \Omega $, that is, $ \varphi_1\leqslant\bar{\varphi}_2  $ or $ \varphi_1\geqslant\bar{\varphi}_2  $ a.e. in $ \Omega $, which implies that $ {\varphi_1}^2\leqslant \bar{\varphi}_2^2 $ or $ \varphi_1^2\geqslant\bar{\varphi}_2^2 $. However, $ \|\varphi_1\|_{L^2(\Omega)}=1=\|\bar{\varphi}_2\|_{L^2(\Omega)} $, we get $ \varphi_1^2=\bar{\varphi}_2^2 $, hence $ u_1=0 $ a.e. in $ \Omega $. Therefore, if $ \varphi\in X_0 $ is an eigenfunction related to $ \lambda_1 $, then $ \varphi= |\varphi|_2\varphi_1 $.   If $\varphi_2\leqslant 0$, using a similar argument to the case that $\varphi_2\geqslant 0$, one infers that  $ \varphi= -|\varphi|_2\varphi_1 $. From this, we conclude that $\lambda_1$ is simple.
		
		\bigskip

		Finally, as for \textbf{(c)}  \textbf{(e)} and \textbf{(f)}, since $\varphi_1$ is a weak solution of \eqref{eq:eigenvalue problem} and satisfies the assumption \eqref{eq: H 1}, applying  Theorems~\ref{th: regularity} and \ref{th:C^{1,alpha}}, one has $\varphi_1\in C^{1,\alpha}(\overline{\Omega})$ for some $\alpha\in(0,1)$. Moreover, it follows from Theorem~\refeq{th C^2,alpha interior}  that $ \varphi_1\in C^{2,\alpha}_{\rm loc}(\Omega)\cap C^{1,\alpha}(\overline{\Omega}) $. 	As a consequence of this and \textbf{(a)}, employing the strong maximum principle, we obtain that, either~ $\varphi_1>0$ or $\varphi_1<0$.
	\end{proof}

	\end{appendix}
	}

\section*{Acknowledgments}
The authors would like to thank the anonymous referee for carefully reading the manuscript and the valuable comments to improve the paper, especially for the suggestions on the $L^\infty$-boundedness.  

\bibliographystyle{is-abbrv}

\bibliography{reference}

\begin{thebibliography}{10}
\ifx \showCODEN  \undefined \def \showCODEN #1{CODEN #1}  \fi
\ifx \showISBN   \undefined \def \showISBN  #1{ISBN #1}   \fi
\ifx \showISSN   \undefined \def \showISSN  #1{ISSN #1}   \fi
\ifx \showLCCN   \undefined \def \showLCCN  #1{LCCN #1}   \fi
\ifx \showPRICE  \undefined \def \showPRICE #1{#1}        \fi
\ifx \showURL    \undefined \def \showURL {URL }          \fi
\ifx \path       \undefined \input path.sty               \fi
\ifx \ifshowURL \undefined
     \newif \ifshowURL
     \showURLtrue
\fi

\bibitem{AC21}
N.~Abatangelo and M.~Cozzi.
\newblock An elliptic boundary value problem with fractional nonlinearity.
\newblock {\em SIAM J. Math. Anal.}, 53\penalty0 (3):\penalty0 3577--3601,
  2021.
\newblock \showISSN{0036-1410}.
\newblock \ifshowURL {\showURL \path|https://doi.org/10.1137/20M1342641|}\fi.

\bibitem{BCCI12}
G.~Barles, E.~Chasseigne, A.~Ciomaga, and C.~Imbert.
\newblock Lipschitz regularity of solutions for mixed integro-differential
  equations.
\newblock {\em J. Differential Equations}, 252\penalty0 (11):\penalty0
  6012--6060, 2012.
\newblock \showISSN{0022-0396}.
\newblock \ifshowURL {\showURL
  \path|https://doi.org/10.1016/j.jde.2012.02.013|}\fi.

\bibitem{BI08}
G.~Barles and C.~Imbert.
\newblock Second-order elliptic integro-differential equations: viscosity
  solutions' theory revisited.
\newblock {\em Ann. Inst. H. Poincar\'{e} C Anal. Non Lin\'{e}aire},
  25\penalty0 (3):\penalty0 567--585, 2008.
\newblock \showISSN{0294-1449}.
\newblock \ifshowURL {\showURL
  \path|https://doi.org/10.1016/j.anihpc.2007.02.007|}\fi.

\bibitem{MR3331523}
B.~n. Barrios, A.~Figalli, and E.~Valdinoci.
\newblock Bootstrap regularity for integro-differential operators and its
  application to nonlocal minimal surfaces.
\newblock {\em Ann. Sc. Norm. Super. Pisa Cl. Sci. (5)}, 13\penalty0
  (3):\penalty0 609--639, 2014.
\newblock \showISSN{0391-173X}.

\bibitem{FABEJK102}
S.~Biagi, S.~Dipierro, E.~Valdinoci, and E.~Vecchi.
\newblock A {F}aber-{K}rahn inequality for mixed local and nonlocal operators.
\newblock {\em J. Anal. Math.}

\bibitem{BDVV22a}
S.~Biagi, S.~Dipierro, E.~Valdinoci, and E.~Vecchi.
\newblock A {B}rezis-{N}irenberg type result for mixed local and nonlocal
  operators.
\newblock 2022.
\newblock Preprint.

\bibitem{BDVV22b}
S.~Biagi, S.~Dipierro, E.~Valdinoci, and E.~Vecchi.
\newblock Mixed local and nonlocal elliptic operators: regularity and maximum
  principles.
\newblock {\em Comm. Partial Differential Equations}, 47\penalty0 (3):\penalty0
  585--629, 2022.
\newblock \showISSN{0360-5302}.
\newblock \ifshowURL {\showURL
  \path|https://doi.org/10.1080/03605302.2021.1998908|}\fi.

\bibitem{BDVV23}
S.~Biagi, S.~Dipierro, E.~Valdinoci, and E.~Vecchi.
\newblock A {H}ong-{K}rahn-{S}zeg\"{o} inequality for mixed local and nonlocal
  operators.
\newblock {\em Math. Eng.}, 5\penalty0 (1):\penalty0 Paper No. 014, 25, 2023.
\newblock \ifshowURL {\showURL \path|https://doi.org/10.3934/mine.2023014|}\fi.

\bibitem{BVDV21}
S.~Biagi, E.~Vecchi, S.~Dipierro, and E.~Valdinoci.
\newblock Semilinear elliptic equations involving mixed local and nonlocal
  operators.
\newblock {\em Proc. Roy. Soc. Edinburgh Sect. A}, 151\penalty0 (5):\penalty0
  1611--1641, 2021.
\newblock \showISSN{0308-2105}.
\newblock \ifshowURL {\showURL \path|https://doi.org/10.1017/prm.2020.75|}\fi.

\bibitem{BJK10}
I.~H. Biswas, E.~R. Jakobsen, and K.~H. Karlsen.
\newblock Viscosity solutions for a system of integro-{PDE}s and connections to
  optimal switching and control of jump-diffusion processes.
\newblock {\em Appl. Math. Optim.}, 62\penalty0 (1):\penalty0 47--80, 2010.
\newblock \showISSN{0095-4616}.
\newblock \ifshowURL {\showURL
  \path|https://doi.org/10.1007/s00245-009-9095-8|}\fi.

\bibitem{DPFR19}
L.~M. Del~Pezzo, R.~Ferreira, and J.~D. Rossi.
\newblock Eigenvalues for a combination between local and nonlocal
  {$p$}-{L}aplacians.
\newblock {\em Fract. Calc. Appl. Anal.}, 22\penalty0 (5):\penalty0 1414--1436,
  2019.
\newblock \showISSN{1311-0454}.
\newblock \ifshowURL {\showURL
  \path|https://doi.org/10.1515/fca-2019-0074|}\fi.

\bibitem{DPV12}
E.~Di~Nezza, G.~Palatucci, and E.~Valdinoci.
\newblock Hitchhiker's guide to the fractional {S}obolev spaces.
\newblock {\em Bull. Sci. Math.}, 136\penalty0 (5):\penalty0 521--573, 2012.
\newblock \showCODEN{BSMQA9}.
\newblock \showISSN{0007-4497}.
\newblock \ifshowURL {\showURL
  \path|http://dx.doi.org/10.1016/j.bulsci.2011.12.004|}\fi.

\bibitem{DPLV22}
S.~Dipierro, E.~Proietti~Lippi, and E.~Valdinoci.
\newblock Linear theory for a mixed operator with {N}eumann conditions.
\newblock {\em Asymptot. Anal.}, 128\penalty0 (4):\penalty0 571--594, 2022.
\newblock \showISSN{0921-7134}.
\newblock \ifshowURL {\showURL \path|https://doi.org/10.3233/asy-211718|}\fi.

\bibitem{DV21}
S.~Dipierro and E.~Valdinoci.
\newblock Description of an ecological niche for a mixed local/nonlocal
  dispersal: an evolution equation and a new {N}eumann condition arising from
  the superposition of {B}rownian and {L}\'{e}vy processes.
\newblock {\em Phys. A}, 575:\penalty0 Paper No. 126052, 20, 2021.
\newblock \showISSN{0378-4371}.
\newblock \ifshowURL {\showURL
  \path|https://doi.org/10.1016/j.physa.2021.126052|}\fi.

\bibitem{MR4560756}
X.~Fern\'{a}ndez-Real and X.~Ros-Oton.
\newblock {\em Regularity theory for elliptic {PDE}}, volume~28 of {\em Zurich
  Lectures in Advanced Mathematics}.
\newblock EMS Press, Berlin, [2022] \copyright 2022.
\newblock \showISBN{978-3-98547-028-0; 978-3-98547-582-5}.
\newblock viii+228 pp.
\newblock \ifshowURL {\showURL \path|https://doi.org/10.4171/zlam/28|}\fi.

\bibitem{GTbook}
D.~Gilbarg and N.~S. Trudinger.
\newblock {\em Elliptic partial differential equations of second order}.
\newblock Classics in Mathematics. Springer-Verlag, Berlin, 2001.
\newblock \showISBN{3-540-41160-7}.
\newblock xiv+517 pp.
\newblock Reprint of the 1998 edition.

\bibitem{JK05}
E.~R. Jakobsen and K.~H. Karlsen.
\newblock Continuous dependence estimates for viscosity solutions of
  integro-{PDE}s.
\newblock {\em J. Differential Equations}, 212\penalty0 (2):\penalty0 278--318,
  2005.
\newblock \showISSN{0022-0396}.
\newblock \ifshowURL {\showURL
  \path|https://doi.org/10.1016/j.jde.2004.06.021|}\fi.

\bibitem{MPV13}
E.~Montefusco, B.~Pellacci, and G.~Verzini.
\newblock Fractional diffusion with {N}eumann boundary conditions: the logistic
  equation.
\newblock {\em Discrete Contin. Dyn. Syst. Ser. B}, 18\penalty0 (8):\penalty0
  2175--2202, 2013.
\newblock \showISSN{1531-3492}.
\newblock \ifshowURL {\showURL
  \path|https://doi.org/10.3934/dcdsb.2013.18.2175|}\fi.

\bibitem{PV18}
B.~Pellacci and G.~Verzini.
\newblock Best dispersal strategies in spatially heterogeneous environments:
  optimization of the principal eigenvalue for indefinite fractional {N}eumann
  problems.
\newblock {\em J. Math. Biol.}, 76\penalty0 (6):\penalty0 1357--1386, 2018.
\newblock \showISSN{0303-6812}.
\newblock \ifshowURL {\showURL
  \path|https://doi.org/10.1007/s00285-017-1180-z|}\fi.

\bibitem{SVWZ22}
X.~Su, E.~Valdinoci, Y.~Wei, and J.~Zhang.
\newblock Regularity results for solutions of mixed local and nonlocal elliptic
  equations.
\newblock {\em Math. Z.}, 302\penalty0 (3):\penalty0 1855--1878, 2022.
\newblock \showISSN{0025-5874}.
\newblock \ifshowURL {\showURL
  \path|https://doi.org/10.1007/s00209-022-03132-2|}\fi.

\bibitem{SVWZ}
X.~Su, E.~Valdinoci, Y.~Wei, and J.~Zhang.
\newblock Multiple solutions for mixed local and nonlocal elliptic equations.
\newblock {\em Math. Z.}, 308\penalty0 (3):\penalty0 40, 2024.
\newblock \showISSN{0025-5874,1432-1823}.
\newblock \ifshowURL {\showURL
  \path|https://doi.org/10.1007/s00209-024-03599-1|}\fi.

\end{thebibliography}

\end{document}